\documentclass[12pt,a4paper]{amsart}
\usepackage{amssymb}
\usepackage{amsfonts}
\usepackage{amsthm}
\usepackage{amsmath}
\usepackage{amscd}
\usepackage[utf8]{inputenc}
\usepackage{t1enc}
\usepackage[mathscr]{eucal}
\usepackage{indentfirst}
\usepackage{graphicx}
\usepackage{graphics}
\numberwithin{equation}{section}
\usepackage[margin=2.9cm]{geometry}
\usepackage{epstopdf} 
\usepackage{xcolor}

\usepackage{verbatim}
\usepackage{enumitem}
\usepackage[numbers,sort&compress]{natbib}
\usepackage{hyperref}

\theoremstyle{plain}
\newtheorem{theorem}{Theorem}[section]
\newtheorem{lemma}[theorem]{Lemma}
\newtheorem{corollary}[theorem]{Corollary}
\newtheorem{proposition}[theorem]{Proposition}

\theoremstyle{definition}
\newtheorem{definition}[theorem]{Definition}

\newtheorem{remark}[theorem]{Remark}
\newtheorem{?}[theorem]{Problem}
\newtheorem{example}[theorem]{Example}
\newcommand{\supp}{\operatorname{supp}}

\newcommand{\R}{\mathbb{R}}

\newcommand{\Z}{\mathbb{Z}}

\newcommand{\Nb}{\mathbf{N}}
\newcommand{\ab}{\mathbf{a}}
\newcommand{\mub}{\pmb{\mu}}
\newcommand{\hb}{\mathbf{h}}
\newcommand{\Hb}{\mathbf{H}}
\newcommand{\Bb}{\mathbf{B}}
\newcommand{\K}{\mathbb{K}}

\newcommand{\Nc}{\mathcal{N}}

\let\emptyset\varnothing

\title[Negative eigenvalue estimates]{Negative eigenvalue estimates for the 1D Schr{\"o}dinger operator with measure-potential}
\author[]{Robert Fulsche}
\address {Institut f\"ur Analysis,
	Leibniz Universit\"at Hannover, 
	Welfengarten 1, 30167 Hannover, Germany}
\email{fulsche@math.uni-hannover.de}

\author[]{Medet Nursultanov}
\address {Department of Mathematics and Statistics, University of Helsinki, Helsinki, and Institute of Mathematics and Mathematical Modeling, Almaty, Kazakhstan}
\email{medet.nursultanov@gmail.com}

\author[]{Grigori Rozenblum}
\address{Chalmers University of Technology}
\email{grigori@chalmers.se}

\begin{document}
	\keywords{Schr\"{o}dinger operator, Singular measures, Lieb-Thirring estimates, distribution of eigenvalues.}
	\subjclass[2010]{Primary 47A75, 34L15; Secondary 34E15}
	
	\begin{abstract}
		We investigate the negative part of the spectrum of the operator $-\partial^2 - \mu$ on $L^2(\mathbb R)$, where  a locally finite Radon measure $\mu \geq 0$  serves as a potential. We obtain estimates for the Eigenvalue counting function, for individual eigenvalues and estimates of the Lieb-Thirring type. A crucial tool for our estimates is Otelbaev's function, a certain average of the measure-potential $\mu$, which is used both in the proofs and the formulation of most of the results.
	\end{abstract}
	
	\maketitle

	\tableofcontents

	\section{Introduction} 
	The paper is devoted to obtaining negative spectrum estimates, in particular, Lieb-Thirring type estimates, for  Sturm-Liouville operators with  Radon measure serving as potential.
	
	\subsection{Eigenvalue estimates}Eigenvalue estimates for Schr{\"o}dinger type operators is a classical topic in analysis. For Schr{\"o}dinger operators of the form 
	\begin{equation}\label{intr.1}
		\mathbf{H}=-\Delta-V(x), \quad x\in\mathbb{R}^{\mathbf{N}},
	\end{equation}
	with the `potential' $V(x)$ tending to zero at infinity, there are numerous results concerning the number of negative eigenvalues, the sum of their powers and their distribution (when there are infinitely many of them). A huge, but still not exhausting, bibliography can be found in the recent book \cite{FrankLaptevWeidl}. Of special interest is the Lieb-Thirring inequality, relating the spectral properties of the Schr{\"o}dinger operator and properties of the corresponding classical Hamiltonian. It  has, in its most classical  case, the form
	\begin{equation}\label{LT.initial}
		\mathrm{LT}_{\gamma}(\mathbf{H})\equiv \sum|\lambda_\nu(\mathbf{H})|^\gamma\le C\int_{\mathbb{R}^\mathbf{N}}V_+(x)^{\frac{\mathbf{N}}{2}+\gamma}dx,
	\end{equation}
	where $\lambda_\nu(\mathbf{H})$ are the negative eigenvalues and $C=C_{\gamma,\Nb}$ is a constant not depending on $V$. The inequality \eqref{LT.initial} holds for $\gamma\ge 0$ if $\mathbf{N}\ge3$ (for $\gamma =0$, it is called the CLR inequality), for $\gamma>0$ if $\mathbf{N}=2$, and for $\gamma\ge\frac12$ if $\mathbf{N}=1$.

	Among many possible generalizations and versions of eigenvalue inequalities, one line of research consists in finding versions of these estimates for the case when the potential $V(x)$, classically supposed to be a function on $\mathbb{R}^\mathbf{N}$, is, in fact, more singular, e.g., a Radon measure on $\mathbb{R}^\mathbf{N}$. As a special case, one can consider measures which are singular with respect to the Lebesgue measure.
	
	In physical and mathematical literature, such singular potentials have attracted considerable interest. A huge bibliography can be found in \cite{SavchukShkalikov, Albev.Gesztesy, MikhailetsMolyboga, DemkovOstrovskii, BrascheExnerKuperin, BelloniRobinett, AlKosh, BrascheNizhnik, KostenkoMalamud, Bonin} and many more. Of course, the one-dimensional case was studied in most detail.
	
	As it concerns the quantitative spectral analysis for general measures serving as a potential, one of the complications is the absence of a quasi-classical pattern which usually hints for the expression entering into the eigenvalue inequalities and asymptotics - compared, say,  with \eqref{LT.initial}, where both the dependence on the potential $V$ and even the best possible coefficient in the estimate are governed by certain quasi-classical heuristics. So, although the qualitative spectral analysis of such Schr{\"o}dinger operators was satisfactory, the question on spectral estimates was rather unexplored.

	\subsection{Singular potentials}
	Probably the first rigorous quantitative spectral study involving singular potentials was \cite{FrankLaptev2008}, where, for a potential supported on a hyperplane in $\mathbb{R}^\mathbf{N}$, $V(x',x_\mathbf{N})=v(x')\otimes\delta(x_{\mathbf{N}})$, the estimate, similar to \eqref{LT.initial}, for the quantity $\mathrm{LT}_{\gamma}(\mathbf{H})$ was found, with the integral of $v(x')^{\mathbf{N}+2\gamma}$ on the right-hand side.
	
	Quite recently, in \cite{Rozenblum2022,RozenblumTashchiyan}, an approach was developed to establish eigenvalue estimates and LT type inequalities for singular potentials of the form $\mu=v(x)\varpi$, where $\varpi$ is a fixed  measure satisfying the `upper Ahlfors condition', 
	\begin{equation}\label{upper Ahlfors}
		\varpi(B(x,r))\le A r^{s},\quad  s>0,
	\end{equation}
	or its two-sided version, and $v(x)\ge0,$ $x\in\supp\varpi$ is a weight function.
	Here, $B(x,r)$ is the ball with radius $r$ centered at $x$. Note that $s=\Nb$ corresponds to an absolutely continuous measure and thus to the usual Schr{\"o}dinger operator \eqref{intr.1}. The estimate in \cite{Rozenblum2022}  has the form 
	\begin{equation}\label{LT.GR}
		\mathrm{LT}_\gamma(\mathbf{H})\leq C \int v(x)^{\theta}\varpi(dx),
		\qquad
		\theta= \frac{s+2\gamma}{s+2-\mathbf{N}},\quad  s>\mathbf{N}-2, s>0,
	\end{equation}
	where $\gamma>0$ for $\mathbf{N}\ge 2$, $\gamma\ge \frac12$ for $\mathbf{N}=1$, in particular, reproducing the above result by R.\ Frank and A.\ Laptev, where $s=\mathbf{N}-1$ (the case $\gamma=0$ and $\mathbf{N}\ge2$ was taken care of previously in \cite{RozenblumTashchiyan}). 
	
	In particular, in the one-dimensional case, $\mathbf{N}=1,$ which we are going to discuss further in the present paper, the conditions in \eqref{LT.GR} allow $\gamma=\frac12$ as long as $s>0$. For this special value of the parameter $\gamma,$ thus, $\theta=1$, $\int v\varpi(dx)=\mu(\mathbb{R}^1),$ estimate \eqref{LT.GR} takes the form
	\begin{equation}\label{LT.GR.1/2}
		\mathrm{LT}_{\frac12}(\mathbf{H})\le C_1\mu(\mathbb{R}^1).
	\end{equation}
	This aesthetic estimate, in which the measure $\mu$ itself enters (and not its density),  creates an impression that, hopefully, \eqref{LT.GR.1/2} is valid for $s=0,$ this means, for an arbitrary nonnegative measure $\mu$, not necessarily an absolutely continuous, as well. The proof in \cite{Rozenblum2022} breaks down for $s=0$, however, fortunately, this case was taken care of previously, in \cite{HLT}, moreover, with sharp constant $C_1=\frac12$ in \eqref{LT.GR.1/2}.  For other values of $\theta,$ $\theta\ne \frac12$,  estimate \eqref{LT.GR.1/2} does not admit a natural generalization, moreover, it is even hard to guess proper terms. 

Generally,	the spectra of  Schr\"{o}dinger (Sturm-Liouville) operators with singular potential were being studied for a long time, especially, in  cases where the potential is a purely discrete measure. Such operators appear in various mathematical models, interesting by themselves but also as an approximation to `continuous' models, and they are still an important field of research today. Here, many methods specific to the one-dimensional case can be applied. 	However, in these studies, the eigenvalue properties are usually expressed rather indirectly, via zeros and poles of some analytic functions generated by the problem or through continued fractions, rather than directly in terms of the characteristics of the potential.
	
	Therefore, we considered it a challenging problem to find estimates for the distribution of negative eigenvalues and for  $\mathrm{LT}_\gamma(\mathbf{H})$ with general, possibly singular and containing a discrete component, measures acting as a potential.  The question on a discrete component, to the best of our knowledge, present only in dimension 1. In a higher dimension, a complication arises when defining a self-adjoint operator; say  with $\delta$-potentials in $\mathbb{R}^{\mathbf{N}}, $ $\mathbf{N}\ge2$, one needs to choose among possible self-adjoint realisations, and this choice, of course, may influence the spectrum considerably. Formally, in \eqref{LT.GR}, discrete measures are excluded by the condition $s>\mathbf{N}-2$.
	
	The feature, common for all dimensions, is the following. Usually, eigenvalue estimates, including \textbf{LT}-estimates like \eqref{LT.initial} or \eqref{LT.GR}, bound the eigenvalues via integral characteristics of the potential $V$; this means, in particular, that the estimates are the same for equimeasurable potentials and, therefore, do not capture the spacial distribution of the potential. Such dependence needs to be reflected by some different form of the right-hand side of the estimates. The natural idea is to use a kind of averaging of the potential in order to obtain sharper eigenvalue estimates. One of the possible choices of averaging, which we use in this paper, is the instrument created long ago by M.\ Otelbaev for finding eigenvalue estimates for the one-dimensional Schr{\"o}dinger operator in the case when the potential, although a function, but quite singular, tends to $+\infty$ at infinity, so that the whole spectrum is discrete, see \cite{Otelbaev,Otelbaev2}. The idea of M.\ Otelbaev's method involves constructing a certain function, the "Otelbaev function", which a certain variable window  average   of the singular potential. In this way, Otelbaev's function reflects both the size of the measure-potential and the spreading of this measure along the real line.
	
	Recently, the two first-named authors of this paper applied this approach by M. Otelbaev to finding sharp eigenvalue estimates for the Sturm-Liouville operator with a measure serving as a potential, again, in the case when the whole spectrum is discrete, see \cite{FulscheNursultanov,Nursultanov}.
	
	In the present paper, we study the spectrum in the case  of the potential tending to zero at infinity (in some sense). This case  is somewhat more delicate than the one above due to the presence of the essential spectrum filling the positive semi-axis. 
	
	We start with some general properties of Radon measures on $\R^1$ and discuss criteria for discreteness of the negative spectrum of the Schr{\"o}dinger operator with such measure as the potential. After this,  we introduce a proper version of the Otelbaev function and describe its basic properties, such as its dependence on the averaging parameter $\alpha$ and behavior at infinity. These properties enable us further on to find \emph{two-sided } eigenvalue estimates for the distribution of negative eigenvalues for the potential being a Radon measure. The measure is not necessarily discrete; however, it may be. In particular, LT-type estimates are obtained in terms of the Otelbaev function, providing both upper and lower bounds, including direct and reverse \textbf{LT}-type inequalities. The latter inequalities cover all positive values of $\gamma$ and   are sharp in the sense that both upper  and lower estimates involve one and the same expression containing the Otelbaev function.  For upper estimates we follow initial approach by T.Weidl, \cite{Weidl}, where  bracketing is used, with the size of partial intervals adapted to the measure $\mu$. Note that reverse \textbf{LT}  estimates, see  \cite{FrankLaptevWeidl}, Theorem 4.48, hold only for $\gamma\le \frac12,$ while ours  are valid for all values of $\gamma>0$.  Note, however, that for $\gamma=\frac12$, the result in \cite{Schmincke}  gives a sharp value of constant in the reverse    \textbf{LT} estimate. However, the approach in \cite{Schmincke} to the lower \textbf{LT} estimate does not easily extend to measure potentials.
	
	Further on, in a number of examples, detailed calculations show effects of various properties of the measure influencing the eigenvalue distribution. We show that our estimates turn out to be sharper than the ones in \cite{Weidl}, and we also show that our estimates improve the subcritical ($\gamma<\frac12$) ones in \cite{NetrusovWeidl}. Additionally, in the case when the condition of discreteness of the negative spectrum is not assumed, we find, in terms of the Otelbaev function, two-sided estimates for the lowest point of the spectrum and the lowest point of the essential spectrum of the operator $\Hb.$ 

	In the paper \cite{BFS23} and the preprint \cite{BFS24},  which appeared recently, a different approach to spectral estimates, including \textbf{LT} ones,  for nonregular potentials is developed, based upon the "landscape" function. Some results, although less explicit, look similar to ours, however the conditions on the operator are considerably more restrictive, requiring, in particular, the potential to be a function in the Kato class.

	\section{Preliminaries}
	\subsection{Radon measures on $\mathbb{R}^1$}
	Throughout this paper, $\mathcal{L}(A)$ denotes the Lebesgue measure of the Borel set $A \subset \mathbb R$. For any interval $I$ in $\mathbb{R}$, the notation $|I|$ refers to the length of the interval. For a set $A\subset \mathbb{R}$, we denote its closure as $\overline{A}$. 
	We let $\mathcal{D}(\cdot)$ and $\mathfrak{d}(\cdot)$ denote the domain of an operator and a form, respectively. By $\sigma(\cdot)$ and $\sigma_{ess}(\cdot)$, we denote the spectrum and the essential spectrum of an operator, respectively. By $\delta_x$, we denote the Dirac measure concentrated at $x \in \mathbb R$. For $x \in \mathbb R$, by $[x]$, we mean the largest integer not exceeding $x$. For $x\in \mathbb{R}$, we use the notation 
	\begin{equation*}
		\Delta_x(d) = \left[x - \frac{d}{2},x + \frac{d}{2}\right].
	\end{equation*}

	Recall that a (positive) locally finite measure $\mu$ on the Borel-$\sigma$-algebra $\mathcal B(\mathbb R)$ of $\mathbb R$ is a \emph{Radon measure} if it satisfies the conditions of local finiteness, outer regularity and inner regularity:
	\begin{align*}
		\mu(K) &< \infty \text{ for all } K \subset \mathbb R \text{ compact,}\\
		\mu(K) &= \inf \{ \mu(O); O \supset K, ~ O \text{ open}\} \text{ for all } K \subset \mathbb R \text{ compact,}\\
		\mu(O) &= \sup \{ \mu(K); K \subset O, ~ K \text{ compact}\} \text{ for all } O \subset \mathbb R \text{ open.}
	\end{align*}

	Later, we will use the following simple lemma:
	\begin{lemma}\label{loc_finite}
		Let $\mu$ be a non-negative Radon measure on $\mathbb R$ and $x\in \mathbb{R}$. Then, $\mu([x - \tau,x)) \rightarrow 0$ as $\tau \rightarrow 0$.
	\end{lemma}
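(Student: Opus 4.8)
The plan is to deduce this from continuity from above for measures, the only subtlety being the finiteness hypothesis needed to apply it. First I would record the two purely set-theoretic facts: the family of half-open intervals $[x-\tau,x)$, $\tau>0$, is non-increasing as $\tau$ decreases to $0$, and $\bigcap_{\tau>0}[x-\tau,x)=\emptyset$, since any point $y<x$ fails to belong to $[x-\tau,x)$ as soon as $\tau<x-y$ (and $x$ itself never belongs to these intervals). Hence the monotone function $\tau\mapsto\mu([x-\tau,x))$ is exactly the object whose limit we want to identify.

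Next I would use local finiteness of the Radon measure $\mu$ to secure the finiteness needed for continuity from above: the interval $[x-1,x)$ is contained in the compact set $[x-1,x]$, so $\mu([x-1,x))\le\mu([x-1,x])<\infty$. Passing to the sequence $\tau_n=1/n$, we have a decreasing sequence of Borel sets $A_n:=[x-1/n,x)$ with $\mu(A_1)<\infty$ and $\bigcap_n A_n=\emptyset$; continuity from above then gives $\lim_{n\to\infty}\mu(A_n)=\mu(\emptyset)=0$.

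Finally, since $\tau\mapsto\mu([x-\tau,x))$ is monotone non-decreasing in $\tau$, the vanishing of the limit along the particular sequence $\tau_n=1/n$ forces $\mu([x-\tau,x))\to0$ as $\tau\to0^+$ without restriction to a subsequence. The main (and essentially only) point to be careful about is invoking the finiteness hypothesis of continuity from above, which is precisely what local finiteness of $\mu$ provides; no genuine obstacle is expected beyond this bookkeeping.
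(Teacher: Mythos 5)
Your proof is correct, but it takes a different route from the paper. You argue via continuity from above: the sets $[x-1/n,x)$ decrease to the empty intersection, the first of them has finite measure because it sits inside the compact interval $[x-1,x]$, so countable additivity forces the measures to tend to $0$, and monotonicity in $\tau$ upgrades the sequential limit to the full limit $\tau\to 0^+$. The paper instead invokes inner regularity: it approximates $\mu((x-2\tau,x))$ from inside by a compact set $K_\varepsilon$, observes that $[x-\tau_\varepsilon,x)$ misses $K_\varepsilon$ for small $\tau_\varepsilon$, and concludes $\mu([x-\tau_\varepsilon,x))<\varepsilon$. Your argument is the more elementary one in the sense that it uses only local finiteness and $\sigma$-additivity, with no appeal to the regularity properties in the definition of a Radon measure; the paper's argument showcases inner regularity directly but needs the small geometric observation that a compact subset of $(x-2\tau,x)$ stays away from the endpoint $x$. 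Both are complete, and the only delicate point in your version --- securing the finiteness hypothesis before applying continuity from above --- is handled correctly.
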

	\begin{proof}
		For $\varepsilon>0$, the inner regularity says that
		\begin{equation*}
			\mu((x-2\tau,x)) = \sup \{ \mu(K); K \subset (x-2\tau,x), ~ K \text{ compact}\}.
		\end{equation*}
		Therefore, there exists a compact set $K_\varepsilon\subset (x-2\tau,x)$ such that $\mu((x-2\tau,x)) - \mu(K_\varepsilon) < \varepsilon$. Since $K_\varepsilon \subset (x-2\tau,x)$ is compact, it follows for sufficiently small $\tau_\varepsilon$ that $[x-\tau_\varepsilon,x) \cap K_\varepsilon = \emptyset$. Therefore, $[x-\tau_\varepsilon,x)\subset (x-2\tau,x) \setminus K_\varepsilon$, and consequently, $\mu([x-\tau_\varepsilon,x))<\varepsilon$. 
	\end{proof}
	
	\subsection{Formulation of the problem}\label{formulation}
	Let $\mu$ be a non-negative Radon measure on $\mathbb R$. We consider the following sesquilinear form
	\begin{equation}\label{form}
		\mathbf{a}_\mu(f,g)=\int_{\mathbb{R}}f'\overline{g'} ~dx - \int_{\mathbb{R}} f\overline{g}~d\mu
	\end{equation}
	with the domain
	\begin{equation*}
		\mathfrak{d}(\mathbf{a}_\mu)=\left\{f\in H^1(\mathbb{R}):  \int_{\mathbb{R}} |f|^2 d\mu < \infty \right\};
	\end{equation*}
	the corresponding quadratic form will be denoted by $\mathbf{a}_\mu[f]:=\mathbf{a}_\mu(f,f)$.
	We recall  the Brinck condition for  $\mu$, see \cite{Brinck}: There exists $C>0$ such that 
	\begin{equation}\label{Brinck}
		\mu([x,x+1]) < C \quad \text{for any } x\in \mathbb{R}.
	\end{equation}
	It is known that \eqref{Brinck} guarantees that the form $\mathbf{a}_\mu$ is lower semi-bounded and closed; see \cite{FulscheNursultanov} or \cite{MikhailetsMolyboga}. Since $\mu$ is sign-definite, the converse is also true: 
	\begin{lemma}\label{semiboundedness}
		Let $\mu$ be a non-negative Radon measure. The following statements are equivalent:
		\begin{enumerate}
			\item \label{semi_boud_mesure} The Brinck condition \eqref{Brinck} holds.
			\item \label{semi_boud_form} The sesquilinear form $\mathbf{a}_\mu$ is lower semi-bounded and closed. 
		\end{enumerate}
	\end{lemma}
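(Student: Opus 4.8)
\emph{Proof proposal.} The implication from the Brinck condition \eqref{Brinck} to lower semi-boundedness and closedness of $\mathbf{a}_\mu$ is exactly the fact recalled just before the lemma, with references \cite{FulscheNursultanov,MikhailetsMolyboga}. So my plan is to prove only the converse, and to do it by contraposition: assuming that \eqref{Brinck} fails, I will exhibit a sequence of test functions in $\mathfrak{d}(\mathbf{a}_\mu)$ along which the Rayleigh quotient of $\mathbf{a}_\mu$ tends to $-\infty$, which already shows that $\mathbf{a}_\mu$ is not lower semi-bounded (so in particular statement~(2) fails). Note that the "closed" part of (2) will not even be needed for this direction.

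Concretely, if \eqref{Brinck} fails then, using that a Radon measure is locally finite, there is a sequence $(x_n)_n \subset \mathbb{R}$ with $\mu([x_n,x_n+1]) \to \infty$ as $n \to \infty$. I would fix once and for all the piecewise-linear "trapezoidal" cutoff $\chi \in H^1(\mathbb{R})$ that equals $1$ on $[0,1]$, vanishes outside $(-1,2)$, and is affine on $[-1,0]$ and on $[1,2]$, and set $f_n(x) := \chi(x-x_n)$. Each $f_n$ is Lipschitz with compact support, hence $f_n \in H^1(\mathbb{R})$; and since $\mu$ is locally finite, $\int_{\mathbb{R}} |f_n|^2\,d\mu \le \mu([x_n-1,x_n+2]) < \infty$, so $f_n \in \mathfrak{d}(\mathbf{a}_\mu)$. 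A direct computation gives $\|f_n'\|_{L^2}^2 = 2$ and $\|f_n\|_{L^2}^2 \le 3$, whereas $f_n \equiv 1$ on $[x_n,x_n+1]$ yields $\int_{\mathbb{R}} |f_n|^2\,d\mu \ge \mu([x_n,x_n+1])$. Hence
\[
	\mathbf{a}_\mu[f_n] = \|f_n'\|_{L^2}^2 - \int_{\mathbb{R}} |f_n|^2\,d\mu \le 2 - \mu([x_n,x_n+1]) \longrightarrow -\infty \quad (n \to \infty),
\]
and, since $\|f_n\|_{L^2}^2 \le 3$, also $\mathbf{a}_\mu[f_n]/\|f_n\|_{L^2}^2 \to -\infty$. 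Thus $\inf\{\mathbf{a}_\mu[f]/\|f\|_{L^2}^2 : 0 \ne f \in \mathfrak{d}(\mathbf{a}_\mu)\} = -\infty$, so $\mathbf{a}_\mu$ is not lower semi-bounded, contradicting (2). This establishes (2)$\Rightarrow$(1) and hence the equivalence.

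I do not expect a genuine obstacle here; the argument is elementary. The only point needing a word of care is the verification that the cutoffs $f_n$ really lie in the form domain $\mathfrak{d}(\mathbf{a}_\mu)$, which is precisely where local finiteness of the Radon measure $\mu$ enters. For completeness I would remark that the substantive half of the equivalence is the cited one, (1)$\Rightarrow$(2): it rests on the relative form bound $\int_{\mathbb{R}} |f|^2\,d\mu \le \varepsilon \|f'\|_{L^2}^2 + C_\varepsilon \|f\|_{L^2}^2$, obtained by covering $\mathbb{R}$ with unit intervals, applying a one-dimensional Sobolev (trace) inequality on each, and summing against the uniform bound \eqref{Brinck}.
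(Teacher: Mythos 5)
Your proposal is correct and follows essentially the same route as the paper: the implication (1)$\Rightarrow$(2) is delegated to the cited references, and (2)$\Rightarrow$(1) is proved by contraposition using translated, compactly supported bump functions that equal $1$ on the offending unit interval, so that $\mathbf{a}_\mu[f_n]\le \|f_n'\|^2-\mu([x_n,x_n+1])$ drives the Rayleigh quotient to $-\infty$. The only difference is cosmetic (a trapezoidal cutoff instead of the paper's cosine bump), and your verification that the cutoffs lie in $\mathfrak{d}(\mathbf{a}_\mu)$ via local finiteness is a sound detail.
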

	\begin{proof}
		Due to \cite[Prop.\ II.4]{FulscheNursultanov} or \cite{MikhailetsMolyboga}, it remains to show that \eqref{semi_boud_form} implies \eqref{semi_boud_mesure}. Assuming that \eqref{semi_boud_mesure} does not hold, for any fixed  $C>0$, there exists $x\in \mathbb{R}$, say, $x=\frac12$, such that $\mu([x,x+1])\geq 3C + \pi^2$.  We define
		\begin{eqnarray*}
			u(x) := \begin{cases}
				1-\cos\left( \pi\left(x-2\right)\right), ~&x \in [0,2],\\
				0, & x \not \in [0,2].
			\end{cases}
		\end{eqnarray*}
		Then, $\|u\|^2 = 3$, while $u \geq 1$ on $[\frac 12, \frac 32]$. Therefore, 
		\begin{equation*}
			\mathbf{a}_\mu[u] = \pi^2 - \int_{0}^2 u^2d\mu \leq \pi^2 - \mu\left (\left [\frac 12 , \frac 32 \right ] \right ).
		\end{equation*}
		Hence,
		\begin{equation*}
			\mathbf{a}_\mu[u]\leq \left(\frac{\pi^2}{3} - \frac{\mu([1/2,3/2])}{3} \right)\|u\|^2 \leq -C\|u\|^2.
		\end{equation*}
		Since $C>0$ was arbitrary, we conclude that $\mathbf{a}_\mu$ is not semi-bounded. 
	\end{proof}
	From now on, we will always assume that $\mu$ satisfies  \eqref{Brinck}, so that $\mathbf{a}_\mu$ is semi-bounded, closed, symmetric, and densely defined. We investigate the
	spectral properties of the operator $\Hb_\mu$ associated with $\mathbf{a}_\mu$ in the sense of Kato's first representation theorem \cite[Theorem 2.6]{Kato}.
	\subsection{Discreteness of the  negative spectrum of $\Hb_{\mu}$} Spectral properties of the operator $\Hb_\mu$  for an absolutely continuous measure $\mu$ are well known, see, e.g., \cite{Birman.61}. For our more general case, we refer to \cite{MazVerb1D}. There, the measure $\mu$ is even allowed to be non-sign-definite.
	
	We recall the general abstract criterion for discreteness of the negative spectrum of Schr{\"o}dinger-like operators; see Theorem 1.3 in  \cite{Birman.61}. Being applied to the operator $\Hb_{\epsilon,\mu}$ defined by the lower semibounded quadratic form
	
	\begin{equation*}
		\hb_{\epsilon,\mu}[u]=\ab[u]-\epsilon\mub[u], \ \ab[u]=\int_{\R^1}|u'(x)|^2dx, \ \mub[u]=\int_{\R^1} |u(x)|^2 d\mu(x),
	\end{equation*}
	on its natural domain,  this theorem establishes that  the negative spectrum of $\Hb_{\epsilon,\mu}$ is discrete for all $\epsilon>0$ if and only if the quadratic form 
	$\mub[u]$ is compact in the Sobolev space $H^1(\R^1)$. 
	
	The necessary and sufficient condition of this compactness is contained in Theorem 2.5 in \cite{MazVerb1D}:
	\begin{theorem}\label{Maz}
		The quadratic form $\mub$ is compact in the Sobolev space $H^1(\R^1)$ if and only if, for any fixed $a>0$, the functions
		\begin{equation*}
			G_+(t)=e^t\int_t^{\infty}e^{-s}d\mu(s), \, G_{-}(t)=e^{-t}\int_{-\infty}^t e^{s}d\mu(s)
		\end{equation*}
		satisfy
		\begin{equation*}
			\lim_{x\to\pm\infty}\int_{x-a}^{x+a}G_{\pm}(t)^2 dt=0.
		\end{equation*}
	\end{theorem}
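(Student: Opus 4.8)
The plan is to characterise compactness of the form $\mub$ in $H^1(\R)$ by a ``smallness of $\mu$ at infinity'' condition, to bound the resulting tail quantity two-sidedly by $G_\pm$, and to match the latter with the averaged condition in the statement. (A more operator-theoretic route would pass to the Birman--Schwinger operator on $L^2(d\mu)$ with kernel $\tfrac12e^{-|x-y|}$ and invoke a Muckenhoupt-type criterion for weighted Hardy operators; the argument below stays inside $H^1$.)

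\emph{Step 1 (reduction to a tail quantity).} A bounded non-negative quadratic form on a Hilbert space is compact if and only if it tends to $0$ along every weakly null sequence. In $H^1(\R)$ weak convergence forces uniform convergence on each bounded interval $I$ (since $H^1(I)\hookrightarrow C(\overline I)$ compactly), and $\mu$ is finite on compacts, so the form $u\mapsto\int_{[-c,c]}|u|^2\,d\mu$ is always compact. A routine argument then yields: $\mub$ is compact in $H^1(\R)$ if and only if
\[
\varepsilon_+(c):=\sup\Big\{\textstyle\int_{(c,\infty)}|u|^2\,d\mu:\ \|u\|_{H^1(\R)}\le1\Big\}\to0\quad(c\to+\infty),
\]
and the analogous quantity $\varepsilon_-(c)$, with the integral over $(-\infty,c)$, tends to $0$ as $c\to-\infty$. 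The reflection $x\mapsto-x$ interchanges $G_+\leftrightarrow G_-$ and the two ends, so it suffices to treat $\varepsilon_+$.

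\emph{Step 2 (two-sided control of $\varepsilon_+$).} Put $w(t):=\tfrac12\int_{\R}e^{-|t-s|}\,d\mu(s)$, the Green potential of $\mu$ for $1-d^2/dx^2$. Under \eqref{Brinck} it is finite, it satisfies $-w''+w=\mu$ distributionally, and $|w'(t)|\le w(t)$ for a.e.\ $t$ (immediately from $w'(t)=-\tfrac12\int\operatorname{sgn}(t-s)e^{-|t-s|}\,d\mu(s)$). Inserting $\varphi=|u|^2$ into the identity $\int w'\varphi'+\int w\varphi=\int\varphi\,d\mu$ and approximating gives, for every $u\in H^1(\R)$,
\[
\int_{\R}|u|^2\,d\mu=\int_{\R}\bigl(2\operatorname{Re}(\overline u\,u')\,w'+|u|^2w\bigr)\,dx\le2\int_{\R}w\,(|u'|^2+|u|^2)\,dx.
\]
Applying this to the restriction $\mu_c$ of $\mu$ to $(c,\infty)$ yields $\varepsilon_+(c)\le2\|w_c\|_{L^\infty}$, $w_c$ being the potential of $\mu_c$; writing out $G_\pm$ for $\mu_c$ and using the elementary estimate $\mu([t-1,t])\le e\,G_+(t-1)$ to control its left-reaching average near $+\infty$, one obtains $\|w_c\|_{L^\infty}\lesssim\sup_{s\ge c}G_+(s)$. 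For the matching lower bound, test with $u_{t_0}(s):=e^{-|s-t_0|/2}$: then $\|u_{t_0}\|_{H^1(\R)}^2=\tfrac52$, while $\int_{(c,\infty)}|u_{t_0}|^2\,d\mu\ge\int_{[t_0,\infty)}e^{-(s-t_0)}\,d\mu(s)=G_+(t_0)$ whenever $t_0>c$, so $\varepsilon_+(c)\ge\tfrac25\sup_{t>c}G_+(t)$. Hence $\varepsilon_+(c)\to0$ as $c\to+\infty$ iff $G_+(s)\to0$ as $s\to+\infty$; combined with Step 1, $\mub$ is compact in $H^1(\R)$ iff $G_+(s)\to0$ as $s\to+\infty$ and $G_-(s)\to0$ as $s\to-\infty$.

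\emph{Step 3 (pointwise versus averaged decay), and conclusion.} It remains to show, for each fixed $a>0$, that $G_+(s)\to0$ ($s\to+\infty$) is equivalent to $\int_{x-a}^{x+a}G_+(t)^2\,dt\to0$ ($x\to+\infty$), and likewise for $G_-$; the theorem then follows from Step 2. The implication ``$\Rightarrow$'' is clear. For ``$\Leftarrow$'': $G_+$ solves $-G_+'+G_+=\mu$, hence grows exponentially on atom-free intervals and has only downward jumps at atoms, so $G_+(t)\ge e^{t-s}G_+(s)$ for $t\le s$. If $G_+(s_k)\ge\delta>0$ along $s_k\to+\infty$, then $G_+\ge\delta/2$ on $[s_k-\ln2,\,s_k]$, whence $\int_{s_k-a}^{s_k+a}G_+^2\ge\tfrac14\delta^2\min(a,\ln2)$ does not tend to $0$, a contradiction. (Finiteness of all objects involved follows from \eqref{Brinck}, which in turn is implied by either side of the equivalence.)

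\emph{Main obstacle.} The technical core is the upper bound in Step 2: making the integration by parts rigorous for the singular $\mu$ (it relies on $-w''+w=\mu$, on $|w'|\le w$, and on density of compactly supported functions in $H^1(\R)$), and, less routinely, estimating the left-reaching exponential average of the \emph{restricted} measure $\mu_c$ near $+\infty$ solely through $\sup_{s\ge c}G_+(s)$ --- this is precisely where the comparison $\mu([t-1,t])\lesssim G_+(t-1)$ between the mass of a unit window and the one-sided average is used, and where the truncation of $\mu$ to a half-line must be handled with care. Steps 1 and 3 are standard.
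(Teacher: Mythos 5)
Your argument is essentially correct, but note that the paper does not prove this statement at all: Theorem \ref{Maz} is quoted verbatim from Maz'ya--Verbitsky (Theorem 2.5 of \cite{MazVerb1D}), applied via Birman's abstract criterion, so there is no internal proof to compare against. What you supply is a self-contained proof valid in the paper's setting of a \emph{non-negative} measure: Step 1 is the standard Birman-type reduction of form-compactness in $H^1$ to smallness of the tail quantities $\varepsilon_\pm(c)$; Step 2 controls $\varepsilon_+(c)$ two-sidedly by $\sup_{s\ge c}G_+(s)$ via the Green potential $w_c$ of the truncated measure (the key inequalities $\lvert w'\rvert\le w$, $\mu([t-1,t])\le e\,G_+(t-1)$, and the test functions $e^{-|s-t_0|/2}$ all check out); Step 3 converts pointwise decay of $G_\pm$ into the windowed $L^2$ condition using the correct one-sided bound $G_+(t)\ge e^{t-s}G_+(s)$ for $t\le s$, which rules out thin spikes. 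The approximation needed to insert $\varphi=|u|^2$ into the weak equation $-w''+w=\mu$ is indeed routine (first take $u$ compactly supported, mollify $|u|^2$, then pass to general $u\in H^1$ by Fatou on the $\mu$-side, using boundedness of $w$ under \eqref{Brinck}). Two caveats are worth recording. First, your proof uses positivity of $\mu$ essentially (the lower bound in Step 2, the monotonicity in Step 3, and the reduction to $\sup G_\pm$), whereas the cited Maz'ya--Verbitsky theorem also covers non-sign-definite $\mu$ --- the situation the paper's subsequent remark refers to and the reason the condition is phrased through an averaged $L^2$ quantity rather than pointwise decay; so you have reproved the theorem only in the sign-definite case, where it is equivalent to the simpler condition $\mu((x-a,x+a))\to0$ of Remark \ref{rem_discr}. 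Second, you prove the ``matched'' reading of the condition ($G_+$ at $+\infty$, $G_-$ at $-\infty$); if one insists on decay of both $G_\pm$ at both ends, you should add the (easy, for positive $\mu$) observation that $\mu([x,x+1])\to0$ at both infinities already forces $G_+(t)\to0$ as $t\to-\infty$ and $G_-(t)\to0$ as $t\to+\infty$. What your route buys is an elementary, quantitative argument ($\tfrac25\sup_{t>c}G_+\le\varepsilon_+(c)\le C\sup_{s\ge c}G_+$) that avoids the weighted-norm machinery of the original reference, at the cost of generality in the sign of the measure.
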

	\begin{remark}\label{rem_discr}
		For a positive measure, this condition is equivalent to 
		\begin{equation}\label{MazCond}
			\mu((x-a,x+a))\to 0 \quad \mbox{as} \, x\to\pm\infty
		\end{equation}
		for some, and therefore, for every $a>0.$ 
	\end{remark}
	
	We will always suppose that these conditions are fulfilled.
	\begin{remark}
		Theorem \ref{Maz} is valid for a non-sign-definite measure $\mu$ as well. Finding eigenvalue estimates for such measures, even for absolutely continuous ones,   while one needs to trace the possible cancellation of the contributions of the positive and negative components, is a hard and challenging problem. See \cite{DaHuSi} for some partial results.    
	\end{remark}
	
	\section{Otelbaev's function}
	In this section, we introduce Otelbaev's function and establish its properties. This function, introduced in \cite{Otelbaev} for the study of Schr{\"o}dinger operators on $\R^1$ having purely discrete spectrum, is a special kind of averaging of the potential. This construction was extended to measure-potentials in \cite{FulscheNursultanov,Nursultanov}.
	\begin{definition}\label{def}
		Let $\mu$ be a non-negative Radon measure on $\mathbb R$; we fix the averaging parameter $\alpha>0$. Then, we define the Otelbaev function by
		\begin{equation}\label{Otelb.function}
			q^*_\alpha(x) := q_{\alpha, \mu}^\ast(x) :=  \inf \left\{\frac1{d^2}: \mu\left(\Delta_x(d)\right) < \frac{1}{\alpha d} \right\}.
		\end{equation}
	\end{definition}
	We will write $q_{\alpha, \mu}^\ast(x)$ or  $q_\alpha^\ast(x)$  if we want to emphasize the dependence on the particular measure $\mu$ or, otherwise, if no confusion between different measures is possible.
	
	Obviously, for any fixed $x$ and sufficiently small $d>0$, the inequality in \eqref{Otelb.function} holds; for $d$ large enough, this inequality does not hold, provided $\mu$ is not the zero measure. By monotonicity in $d$, $q_\alpha^*$ is well-defined. Moreover, we will show that $q_\alpha^*$ is continuous, and if $\mu(\mathbb{R})>0$, then it is strictly positive. We will also consider the auxiliary quantity
	\begin{equation*}
		d_\alpha(x) := \frac{1}{\sqrt{q_\alpha^*(x)}}.
	\end{equation*}
	\begin{remark}\label{rem_cont_meas}
		If $\mu$ is a\emph{ continuous} measure, that is, it contains no point masses, then $d_\alpha(x)$ is the unique solution to the equation
		\begin{equation}\label{eq}
			\mu\left(\Delta_x(d)\right) = \frac{1}{\alpha d}.
		\end{equation}
	\end{remark}
	
	If $\mu$ contains point masses, this statement may be wrong. Indeed, take $\mu = \delta_{-1} + \delta_{1}$.
	Then, for any $0<\tau< 2$ and $\kappa\geq 2$,
	\begin{equation*}
		\mu\left(\left[-\frac{\tau}{2},\frac{\tau}{2}\right]\right) = 0 < \frac{1}{2\tau},
		\qquad
		\mu\left(\left[-\frac{\kappa}{2},\frac{\kappa}{2}\right]\right) = 2 > \frac{1}{2\kappa}.
	\end{equation*}
	Therefore, $q_1^*(0) = 1/4$ and $d_1(0) = 2$. Since
	\begin{equation*}
		\mu\left(\left[-\frac{d_1(0)}{2},\frac{d_1(0)}{2}\right]\right) = \mu([-1,1]) > 1/2,
	\end{equation*}
	we conclude that $d_1(0)$ does not satisfy \eqref{eq} with $x=0$. 
	
	In general, $d_\alpha(x)$ is the length of the smallest interval centered at $x$ such that its $\mu$-measure is not less than $1/(\alpha d_\alpha(x))$. More formally, this property can be stated as follows:
	\begin{lemma}\label{max_interval}
		Let $\alpha>0$ and $x\in \mathbb{R}$. Then 
		\begin{equation*}
			\mu\left( \Delta_x(d_\alpha(x)) \right) \geq \frac{1}{\alpha d_\alpha(x)}
		\end{equation*}
		and 
		\begin{equation*}
			\mu\left( \Delta_x(d_\alpha(x) - \varepsilon) \right) < \frac{1}{\alpha (d_\alpha(x) - \varepsilon)}
		\end{equation*}
		for any $0 < \varepsilon < d_\alpha(x)$.
	\end{lemma}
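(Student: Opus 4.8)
The plan is to realize $d_\alpha(x)$ as the supremum of the set
\[
S := \left\{ d>0 : \mu(\Delta_x(d)) < \tfrac{1}{\alpha d} \right\},
\]
and then read off both claimed inequalities from elementary properties of $S$; throughout I use that $\mu$ is not the zero measure, as recorded just after \eqref{Otelb.function}.

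First I would record three properties of $S$. (i) $S$ is \emph{downward closed}: if $d\in S$ and $0<d'<d$, then $\Delta_x(d')\subset\Delta_x(d)$ gives $\mu(\Delta_x(d'))\le\mu(\Delta_x(d))<\tfrac{1}{\alpha d}<\tfrac{1}{\alpha d'}$, so $d'\in S$. (ii) $S$ is nonempty: as $d\downarrow 0$ one has $\mu(\Delta_x(d))\to\mu(\{x\})$ (local finiteness, or Lemma \ref{loc_finite}) while $\tfrac{1}{\alpha d}\to\infty$, so small $d$ lie in $S$. (iii) $S$ is bounded above: since $\mu\neq 0$, inner regularity yields a compact $K$ with $\mu(K)>0$, and once $d$ is large enough that $K\subset\Delta_x(d)$ one gets $\mu(\Delta_x(d))\ge\mu(K)>\tfrac{1}{\alpha d}$, so such $d$ are not in $S$. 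Hence $S$ is an interval $(0,D)$ or $(0,D]$ with $D:=\sup S\in(0,\infty)$; since $d\mapsto d^{-2}$ is strictly decreasing, $\inf\{d^{-2}:d\in S\}=D^{-2}$, i.e.\ $q^*_\alpha(x)=D^{-2}$ and $d_\alpha(x)=D$.

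The second inequality is then immediate: for $0<\varepsilon<d_\alpha(x)=\sup S$ there is $d\in S$ with $d>d_\alpha(x)-\varepsilon$, and downward closedness gives $d_\alpha(x)-\varepsilon\in S$, which is exactly $\mu(\Delta_x(d_\alpha(x)-\varepsilon))<\tfrac{1}{\alpha(d_\alpha(x)-\varepsilon)}$. For the first inequality I would argue by contradiction: suppose $\mu(\Delta_x(D))<\tfrac{1}{\alpha D}$. The closed intervals $\Delta_x(D+\delta)$ decrease to $\Delta_x(D)$ as $\delta\downarrow 0$, and $\mu(\Delta_x(D+1))<\infty$ by local finiteness, so continuity of $\mu$ from above gives $\mu(\Delta_x(D+\delta))\to\mu(\Delta_x(D))$ as $\delta\downarrow 0$. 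Since also $\tfrac{1}{\alpha(D+\delta)}\to\tfrac{1}{\alpha D}$ and $\mu(\Delta_x(D))<\tfrac1{\alpha D}$ strictly, for all sufficiently small $\delta>0$ we obtain $\mu(\Delta_x(D+\delta))<\tfrac{1}{\alpha(D+\delta)}$, i.e.\ $D+\delta\in S$, contradicting $D=\sup S$. Hence $\mu(\Delta_x(D))\ge\tfrac{1}{\alpha D}$.

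I expect the only genuinely delicate point to be the continuity-from-above step in the last paragraph: one must use that the $\Delta_x(\cdot)$ are \emph{closed} intervals, so that $\bigcap_{\delta>0}\Delta_x(D+\delta)=\Delta_x(D)$ exactly, and invoke local finiteness to pass to the limit; the monotonicity of $S$ and the derivation of the second inequality are routine.
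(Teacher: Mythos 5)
Your proposal is correct and follows essentially the same route as the paper's proof: the strict inequality for radii below $d_\alpha(x)$ comes from monotonicity of $d\mapsto\mu(\Delta_x(d))$ (your downward closedness of $S$), and the inequality at $d_\alpha(x)$ from right-continuity of this map at the closed interval together with a contradiction against the defining infimum (equivalently, your $\sup S$). The only cosmetic difference is that you invoke the standard continuity-from-above property of $\mu$ on the shrinking closed intervals $\Delta_x(D+\delta)$, where the paper reaches the same convergence $\mu(\Delta_x(d_\alpha(x)+\varepsilon))\to\mu(\Delta_x(d_\alpha(x)))$ via its Lemma \ref{loc_finite}.
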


	\begin{proof}
		Assume that the first estimate breaks down, that is,  $ \mu\left( \Delta_x(d_\alpha(x)) \right) < \frac{1}{\alpha d_\alpha(x)}.$  By Lemma \ref{loc_finite}, 
		\begin{equation*}
			\mu\left(\left[x - \frac{d_\alpha(x) + \varepsilon}{2}\right., \left. x - \frac{d_\alpha(x)}{2}\right) \right) \rightarrow 0,\, 
			\mu\left( \left(x + \frac{d_\alpha(x) }{2}\right., \left. x + \frac{d_\alpha(x)+ \varepsilon}{2}\right]\right) \rightarrow 0,
		\end{equation*}
		as $\varepsilon \rightarrow 0$. Therefore, $\mu\left( \Delta_x(d_\alpha(x)+\varepsilon) \right) \rightarrow \mu\left( \Delta_x(d_\alpha(x)\right)$. Since
		$  \frac{1}{\alpha (d_\alpha(x)+\varepsilon)} \rightarrow \frac{1}{\alpha d_\alpha(x)},    $
		we obtain, due to our assumption, for sufficiently small $\varepsilon>0$:
		\begin{equation*}
			\mu\left( \Delta_x(d_\alpha(x)+\varepsilon) \right) < \frac{1}{\alpha (d_\alpha(x)+\varepsilon)}.
		\end{equation*}
		Hence, by the  definition of $q_\alpha^*$, we obtain 
		$    \frac{1}{(d_\alpha(x) + \varepsilon)^2} \geq q_\alpha^*(x)$, and this contradicts $q_\alpha^*(x) = 1/ d_\alpha^2(x)$.
		
		If  the second statement of the lemma is wrong,  there should exist $\varepsilon\in (0,d_\alpha(x))$ such that
		\begin{equation*}
			\mu\left( \Delta_x(d_\alpha(x)-\varepsilon) \right) \geq \frac{1}{\alpha (d_\alpha(x) - \varepsilon)}.
		\end{equation*}
		Then, for any $d\geq d_\alpha(x) - \varepsilon$, 
		\begin{equation*}
			\mu\left( \Delta_x(d) \right)\geq \mu \left( \Delta_x(d_\alpha(x)-\varepsilon) \right) \geq \frac{1}{\alpha (d_\alpha(x) -\varepsilon)} \geq \frac{1}{\alpha d}.
		\end{equation*}
		Therefore, 
		\begin{align*}
			q^*_\alpha(x) &= \inf \left\{\frac1{d^2}: \mu\left(\Delta_x(d)\right) < \frac{1}{\alpha d} \right\}\\
			&= \inf \left\{\frac1{d^2}: \mu\left(\Delta_x(d)\right) < \frac{1}{\alpha d} \; \text{and} \; d< d_\alpha(x) - \varepsilon\right\}.
		\end{align*}
		In particular, 
		\begin{equation*}
			q_\alpha^*(x) \geq \frac{1}{(d_\alpha(x) - \varepsilon)^2},
		\end{equation*}
		and this contradicts $q_\alpha^*(x) = 1/ d_\alpha^2(x)$. 
	\end{proof}
	
	In \cite{FulscheNursultanov}, Otelbaev's function was defined in a slightly different way, i.e., in the \emph{infimum} defining $q_\alpha^\ast$, the condition $\mu(\Delta_x(d)) \leq \frac{1}{\alpha d}$ was used instead of the present condition with the strict inequality. We will briefly verify that the definition in \cite{FulscheNursultanov} and the present one are equivalent; the present one is more convenient.
	\begin{lemma}
		The following equalities hold true:
		\begin{equation*}
			q^*_\alpha(x) = \inf \left\{\frac1{d^2}: \mu\left(\Delta_x(d)\right) \leq \frac{1}{\alpha d} \right\}
		\end{equation*}
		and
		\begin{equation}\label{old_def}
			d_\alpha(x) = \sup \left\{d: \mu\left(\Delta_x(d)\right) \leq \frac{1}{\alpha d} \right\}.
		\end{equation}
	\end{lemma}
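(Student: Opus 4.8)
The plan is to compare directly the two index sets $S_< := \{d > 0 : \mu(\Delta_x(d)) < \frac{1}{\alpha d}\}$, which defines $q^*_\alpha(x)$ in Definition \ref{def}, and $S_\le := \{d > 0 : \mu(\Delta_x(d)) \le \frac{1}{\alpha d}\}$, used in \cite{FulscheNursultanov}. Since $S_< \subseteq S_\le$, the bound $\inf\{d^{-2} : d \in S_\le\} \le \inf\{d^{-2} : d \in S_<\} = q^*_\alpha(x)$ is immediate. For the opposite inequality I would fix an arbitrary $d \in S_\le$ and an arbitrary $\varepsilon \in (0,d)$; since $d \mapsto \mu(\Delta_x(d))$ is non-decreasing (nested intervals) while $d \mapsto \frac{1}{\alpha d}$ is strictly decreasing,
\[
	\mu(\Delta_x(d-\varepsilon)) \le \mu(\Delta_x(d)) \le \frac{1}{\alpha d} < \frac{1}{\alpha(d-\varepsilon)},
\]
so $d - \varepsilon \in S_<$, hence $q^*_\alpha(x) \le (d-\varepsilon)^{-2}$; letting $\varepsilon \downarrow 0$ gives $q^*_\alpha(x) \le d^{-2}$, and since $d \in S_\le$ was arbitrary, $q^*_\alpha(x) \le \inf\{d^{-2} : d \in S_\le\}$. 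This proves the first equality.

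For the second equality I would pass from the infimum over $d^{-2}$ to a supremum over $d$: because $t \mapsto t^{-2}$ is continuous and strictly decreasing on $(0,\infty)$, one has $\inf\{d^{-2} : d \in S_\le\} = (\sup S_\le)^{-2}$. As noted right after Definition \ref{def}, $S_\le$ is nonempty (small $d$ work), and, as long as $\mu$ is not the zero measure, $\sup S_\le < \infty$, since $\mu(\Delta_x(d)) \to \mu(\R) > 0$ while $\frac{1}{\alpha d} \to 0$ as $d \to \infty$. Combining this with the definition $q^*_\alpha(x) = d_\alpha(x)^{-2}$ and with the first equality, we obtain $d_\alpha(x)^{-2} = (\sup S_\le)^{-2}$, i.e.\ $d_\alpha(x) = \sup S_\le$, which is exactly \eqref{old_def}.

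Since the whole argument is built only from the monotonicity of $d \mapsto \mu(\Delta_x(d))$ and of $d \mapsto \frac{1}{\alpha d}$, there is no genuine obstacle; the only point deserving a word of care is the degenerate case $\mu = 0$, where $q^*_\alpha(x) = 0$, $d_\alpha(x) = +\infty$ and $S_\le = (0,\infty)$ in the natural extended-real conventions, so that both identities still hold. This equivalence is the reason the strict-inequality formulation can be taken as the definition without losing anything relative to \cite{FulscheNursultanov}.
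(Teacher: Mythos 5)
Your proof is correct, but it runs in the opposite direction to the paper's and uses a more elementary mechanism. The paper reduces everything to the identity \eqref{old_def} for $d_\alpha$, gets the easy inequality $d_\alpha(x)\leq\tilde d_\alpha(x)$ for free, and for the converse invokes Lemma \ref{max_interval} (whose first half in turn rests on the regularity statement of Lemma \ref{loc_finite}) to rule out any $d>d_\alpha(x)$ satisfying even the non-strict inequality. You instead prove the identity for $q_\alpha^\ast$ first, and your key step is the $\varepsilon$-shrinking argument $\mu(\Delta_x(d-\varepsilon))\leq\mu(\Delta_x(d))\leq\frac{1}{\alpha d}<\frac{1}{\alpha(d-\varepsilon)}$, which uses nothing beyond monotonicity of $\mu$ under inclusion and strict monotonicity of $d\mapsto\frac{1}{\alpha d}$; you then recover \eqref{old_def} by the standard inversion $\inf\{d^{-2}:d\in S_\leq\}=(\sup S_\leq)^{-2}$. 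What your route buys is independence from Lemma \ref{max_interval} and from any regularity of the measure (the argument works for any monotone set function), and it handles the passage between the two formulations symmetrically; what the paper's route buys is brevity given that Lemma \ref{max_interval} is already in place, plus the harmless normalization to $\alpha=1$. Your closing remark on the degenerate case $\mu=0$ (where $q_\alpha^\ast\equiv 0$ and $d_\alpha\equiv+\infty$ in the extended sense) is a point the paper leaves implicit and is consistent with the conventions following Definition \ref{def}.
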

	\begin{proof}
		Due to the relation between $q^*_\alpha$ and $d_\alpha$, we only need to show the second statement. Further, it suffices to prove the statement for $\alpha = 1$; so we denote the right-hand side of \eqref{old_def} by $\tilde{d}_1(x)$. For $x\in \mathbb{R}$, obviously, $d_1(x)\leq \tilde{d}_1(x)$. To show the converse estimate, we choose any $d>d_1(x)$. Then, by Lemma \ref{max_interval},
		\begin{equation*}
			\mu\left(\Delta_x(d)\right) \geq \mu\left(\Delta_x(d_1(x))\right)\geq \frac{1}{d_1(x)} > \frac{1}{d}
		\end{equation*}
		for any $d>d_1(x)$. Therefore, 
		\begin{align*}
			\tilde{d}_1(x) &= \sup \left\{ d > 0: ~\mu\left(\Delta_x(d)\right) < \frac{1}{d}\right\}\\
			&= \sup \left\{ d > 0: ~\mu\left(\Delta_x(d)\right) \leq \frac{1}{d}, \; d\leq d_1(x) \right\}.
		\end{align*}
		This implies $\tilde{d}_1(x)\leq d_1(x)$. Hence, we  conclude that $\tilde{d}_1(x)=d_1(x)$.
	\end{proof}
	
	Now, we are ready to derive some crucial properties of the function $q_\alpha^*$.
	\begin{proposition}
		Let $\mu$ be a non-negative Radon measure on $\mathbb R$. The following statements hold true:
		\begin{enumerate}
			\item \label{q*_positive}If $\mu \neq 0$, then $q_\alpha^\ast(x) > 0$ for every $x \in \mathbb R$.
			\item \label{q*_joint_cont} $q_\alpha^\ast(x)$ is a jointly continuous function of $(\alpha, x) \in (0, \infty) \times \mathbb R$.
		\end{enumerate}
	\end{proposition}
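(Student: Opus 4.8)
The plan is to work throughout with the auxiliary function $d_\alpha(x) = 1/\sqrt{q_\alpha^*(x)}$, since the relation $q_\alpha^*(x) = d_\alpha(x)^{-2}$ means that continuity and positivity of $q_\alpha^*$ are equivalent to continuity and finiteness-with-positive-lower-bound of $d_\alpha$. The Brinck condition \eqref{Brinck} is the key quantitative input: it gives $\mu(\Delta_x(d)) \le \mu([x-\tfrac d2, x+\tfrac d2]) \le C(\lceil d\rceil + 1) \le C'(d+1)$ uniformly in $x$, which forces $d_\alpha(x)$ to be bounded above on bounded sets (hence $q_\alpha^*$ bounded below there), and also forces $d_\alpha(x) \to 0$ hence $q_\alpha^*(x)\to\infty$ wherever $\mu$ has a lot of mass. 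For \eqref{q*_positive}: if $\mu\ne 0$, pick a bounded interval $J$ with $\mu(J) = m > 0$; for $x$ fixed and any $d$ with $\Delta_x(d)\supset J$ we have $\mu(\Delta_x(d)) \ge m$, and for $d$ with $m \ge 1/(\alpha d)$, i.e. $d \ge 1/(\alpha m)$, the defining inequality fails, so only $d < d_0 := \max(\operatorname{diam} \text{ needed to cover } J \text{ from } x,\ 1/(\alpha m))$ can occur; thus $d_\alpha(x) \le d_0 < \infty$ and $q_\alpha^*(x)\ge d_0^{-2} > 0$.

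For joint continuity \eqref{q*_joint_cont}, I would prove the two one-sided estimates on $d_\alpha$ separately, using Lemma \ref{max_interval} as the main tool, and the monotone/continuity properties of $(\alpha,x,d)\mapsto \mu(\Delta_x(d))$. Fix $(\alpha_0,x_0)$; write $d_0 = d_\alpha(x_0)$ and recall from Lemma \ref{max_interval} that $\mu(\Delta_{x_0}(d_0)) \ge 1/(\alpha_0 d_0)$ while $\mu(\Delta_{x_0}(d_0-\varepsilon)) < 1/(\alpha_0(d_0-\varepsilon))$ for all small $\varepsilon>0$. Upper semicontinuity: for the strict inequality at $d_0-\varepsilon$, note $\mu(\Delta_x(d)) \le \mu(\Delta_{x_0}(d_0-\varepsilon+2|x-x_0| + (d-(d_0-\varepsilon))))$ — that is, a slightly larger symmetric interval about $x_0$ contains $\Delta_x(d)$ — and by Lemma \ref{loc_finite} (applied to the two small end-pieces) together with the convergence $1/(\alpha d)\to 1/(\alpha_0(d_0-\varepsilon))$, the inequality $\mu(\Delta_x(d)) < 1/(\alpha d)$ persists for $(\alpha,x)$ near $(\alpha_0,x_0)$ and $d$ near $d_0-\varepsilon$; this yields $d_\alpha(x) \le d_0-\varepsilon+o(1)$, hence $\limsup d_\alpha(x) \le d_0$. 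Lower semicontinuity: here one must rule out $d_\alpha$ dropping; if $d_{\alpha_n}(x_n) \to \ell < d_0$ along some sequence $(\alpha_n,x_n)\to(\alpha_0,x_0)$, then for any $d\in(\ell,d_0)$ eventually $\mu(\Delta_{x_n}(d)) < 1/(\alpha_n d)$ by Lemma \ref{max_interval}; shrinking $\Delta_{x_n}(d)$ to an interval slightly smaller than $\Delta_{x_0}(d)$ and passing to the limit (again via Lemma \ref{loc_finite} for the boundary pieces) gives $\mu(\Delta_{x_0}(d-0)) \le 1/(\alpha_0 d)$, which on letting $d\uparrow d_0$ contradicts $\mu(\Delta_{x_0}(d_0)) \ge 1/(\alpha_0 d_0)$ unless $\mu$ put all the relevant mass exactly on the two endpoints of $\Delta_{x_0}(d_0)$ — and even that case is handled because at a point where $d_\alpha$ could jump, $d_0$ itself witnesses the non-strict inequality while slightly smaller $d$ witness the strict one, so the argument closes.

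The main obstacle, as the $\delta_{-1}+\delta_1$ example in the text already signals, is precisely the possible presence of point masses on the \emph{boundary} of $\Delta_{x_0}(d_0)$: there $d\mapsto\mu(\Delta_{x_0}(d))$ is genuinely discontinuous (jumps up as $d$ crosses $2$), so one cannot simply invoke continuity of $d\mapsto\mu(\Delta_x(d))$. The resolution is that Lemma \ref{loc_finite} controls only the \emph{half-open} end-pieces $[x-\tfrac{d+\varepsilon}2, x-\tfrac d2)$ and $(x+\tfrac d2, x+\tfrac{d+\varepsilon}2]$, which carry no point mass at the endpoints $x\pm\tfrac d2$ themselves; combined with the asymmetry built into the definition of $q_\alpha^*$ (strict inequality $<$, with the infimum) and Lemma \ref{max_interval}'s two-sided characterization of $d_\alpha(x)$, one always has enough room: approaching $d_0$ from below the strict inequality is available, and at $d_0$ the non-strict inequality holds, and these match up continuously in $(\alpha,x)$. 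I would present the upper- and lower-semicontinuity estimates as two short lemmas or two halves of the proof, in each case reducing to Lemma \ref{loc_finite} plus Lemma \ref{max_interval}, and then combine them to conclude joint continuity of $d_\alpha$, hence of $q_\alpha^*$.
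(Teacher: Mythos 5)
Your part \eqref{q*_positive} is fine, and it is a direct argument where the paper simply reduces to $\alpha=1$ and cites \cite{FulscheNursultanov}: choosing a bounded $J$ with $\mu(J)=m>0$, the defining inequality in \eqref{Otelb.function} fails for all $d$ large enough that $\Delta_x(d)\supset J$ and $d\ge 1/(\alpha m)$, so $d_\alpha(x)<\infty$ and $q_\alpha^*(x)>0$. Your framing of Brinck's condition as ``the key quantitative input'' is, however, wrong and should be dropped: the proposition does not assume \eqref{Brinck}, and an upper bound on $\mu(\Delta_x(d))$ enlarges the set in \eqref{Otelb.function}, hence bounds $q_\alpha^*$ from \emph{above} (this is exactly Lemma \ref{Brinc_it_terms_q*}), not from below; it certainly does not force $d_\alpha$ to be small where $\mu$ is large. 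Since your actual arguments never use \eqref{Brinck}, this is a harmless but misleading remark.

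In part \eqref{q*_joint_cont} there is a genuine gap: the bound $\limsup_{(\alpha,x)\to(\alpha_0,x_0)} d_\alpha(x)\le d_0$ (equivalently, that $q_\alpha^*$ cannot drop in the limit) is never correctly proved. In your ``upper semicontinuity'' paragraph the mechanism you describe --- persistence of the strict inequality $\mu(\Delta_x(d))<\frac{1}{\alpha d}$ for $(\alpha,x,d)$ near $(\alpha_0,x_0,d_0-\varepsilon)$ --- shows that such $d$ lie in the set whose supremum is $d_\alpha(x)$, i.e.\ it yields the \emph{lower} bound $d_\alpha(x)\ge d_0-\varepsilon-o(1)$; the stated conclusion ``$d_\alpha(x)\le d_0-\varepsilon+o(1)$'' does not follow, the inequality is backwards. (So this paragraph in fact proves, correctly, the ``no drop'' half $\liminf d_\alpha(x)\ge d_0$.) Your ``lower semicontinuity'' paragraph then attempts that same half again, and its key step is false: if $d_{\alpha_n}(x_n)\to\ell<d_0$ and $d\in(\ell,d_0)$, then eventually $d>d_{\alpha_n}(x_n)$, and Lemma \ref{max_interval} gives the \emph{opposite} inequality $\mu(\Delta_{x_n}(d))\ge\mu(\Delta_{x_n}(d_{\alpha_n}(x_n)))\ge\frac{1}{\alpha_n d_{\alpha_n}(x_n)}>\frac{1}{\alpha_n d}$, not $\mu(\Delta_{x_n}(d))<\frac{1}{\alpha_n d}$; the endpoint-mass case you mention at the end is also only waved at, not handled. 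The missing half is actually the easy one and needs neither Lemma \ref{loc_finite} nor any discussion of atoms: since $\mu(\Delta_{x_0}(d_0))\ge\frac{1}{\alpha_0 d_0}$ and $\Delta_{x_0}(d_0)\subset\Delta_x(d)$ whenever $d\ge d_0+2|x-x_0|$, one has $\mu(\Delta_x(d))\ge\frac{1}{\alpha_0 d_0}\ge\frac{1}{\alpha d}$ as soon as also $\alpha d\ge\alpha_0 d_0$; hence $d_\alpha(x)\le\max\left(d_0+2|x-x_0|,\ \frac{\alpha_0 d_0}{\alpha}\right)\to d_0$. With that half added (and the labels straightened out), your direct $\varepsilon$-argument does give the proposition, by a route genuinely different from the paper's, which instead uses the scaling identity $q_{\alpha,\mu}^*=q_{1,\alpha\mu}^*$, translation, and the stability of $q_{1,\cdot}^*$ under weak$^*$ convergence of measures quoted from \cite{FulscheNursultanov}; your route is self-contained, while the paper's is shorter but leans on the earlier work.
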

	\begin{proof}
		By the definition, $q_{\alpha, \mu}^\ast(x) = q_{1, \alpha \mu}^\ast(x)$. Hence, \eqref{q*_positive} follows from the statement for $\alpha = 1$, cf.\ \cite[Prop.\ III.3]{FulscheNursultanov}. To verify \eqref{q*_joint_cont}, note that whenever there is a sequence $(\alpha_n, t_n)_{n \in \mathbb N}$ such that  $(\alpha_n, t_n) \to (\alpha, t)$, then $\alpha_n \mu(\cdot - t_n) \to \alpha \mu(\cdot - t)$ in weak$^\ast$ topology, i.e., for any $f \in C_c(\mathbb R),$   by the dominated convergence theorem,
		\begin{align*}
			\int &f(x) ~d(\alpha_n \mu(\cdot - t_n))(x) = \alpha_n \int f(x + t_n)~ d\mu(x) \\
			&\overset{n \to \infty}{\longrightarrow} \alpha \int f(x + t) ~d\mu(x) = \int f(x) ~d(\alpha \mu(\cdot - t))(x).
		\end{align*}
		Hence, by \cite[Prop.\ III.5]{FulscheNursultanov},  for each $x \in \mathbb R$,
		\begin{align*} 
			q_{\alpha_n, \mu}^\ast(x - t_n) = q_{1, \alpha_n \mu(\cdot - t_n)}^\ast(x) \longrightarrow q_{1, \alpha\mu(\cdot - t)}^\ast(x) = q_{\alpha, \mu}^\ast(x-t), 
		\end{align*}
		proving that $q_{\alpha, \mu}^\ast(x)$ is jointly continuous in $\alpha$ and $x$.
	\end{proof}
	
	We will now show that Otelbaev's function has controlled variation.
	\begin{lemma}\label{equiv_q*}
		Let $\mu$ be a non-negative Radon measure on $\mathbb R$. Let $\alpha>0$ and $x\in \mathbb{R}$. Then, for all points $y \in \Delta_x(d_\alpha(x))$ and $z \in   \Delta_x\left(d_\alpha(x)/2\right)$, the following inequalities hold true: 
		\begin{equation*}
			\frac{1}{4}q^*_\alpha(z) \leq q^*_\alpha(x) \leq 4q^*_\alpha(y).
		\end{equation*}
	\end{lemma}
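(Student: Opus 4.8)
The plan is to pass from $q^*_\alpha$ to the associated lengths $d_\alpha(\cdot)=1/\sqrt{q^*_\alpha(\cdot)}$ and then argue purely by monotonicity of $\mu$, using Lemma~\ref{max_interval} and the characterization \eqref{old_def} of $d_\alpha$. We may assume $\mu\neq 0$, since for $\mu=0$ one has $q^*_\alpha\equiv 0$ and the claim is trivial; then $0<q^*_\alpha(x)<\infty$, equivalently $0<d_\alpha(x)<\infty$, for all $x$ (positivity by the preceding proposition, finiteness being elementary). Since $q^*_\alpha=1/d_\alpha^2$, the two asserted inequalities are equivalent to the length comparisons
\begin{equation*}
	d_\alpha(y)\le 2 d_\alpha(x)\ \text{ for } y\in\Delta_x(d_\alpha(x)),\qquad d_\alpha(x)\le 2 d_\alpha(z)\ \text{ for } z\in\Delta_x(d_\alpha(x)/2),
\end{equation*}
and I would establish these two separately.

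For the upper bound $q^*_\alpha(x)\le 4q^*_\alpha(y)$ I would fix $y$ with $|y-x|\le d_\alpha(x)/2$, take any $d>2 d_\alpha(x)$, and observe that a one-line comparison of endpoints gives $\Delta_x(d_\alpha(x))\subseteq\Delta_y(d)$. Hence, by Lemma~\ref{max_interval},
\begin{equation*}
	\mu(\Delta_y(d))\ge\mu(\Delta_x(d_\alpha(x)))\ge\frac{1}{\alpha d_\alpha(x)}>\frac{1}{\alpha d},
\end{equation*}
using $d>2 d_\alpha(x)>d_\alpha(x)$. Thus no $d>2 d_\alpha(x)$ lies in the set over which the infimum defining $q^*_\alpha(y)$ is taken; that set therefore lies in $(0,2 d_\alpha(x)]$, which forces $q^*_\alpha(y)\ge(2 d_\alpha(x))^{-2}=\tfrac14 q^*_\alpha(x)$.

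For the lower bound $\tfrac14 q^*_\alpha(z)\le q^*_\alpha(x)$, i.e.\ $d_\alpha(x)\le 2 d_\alpha(z)$, I would fix $z$ with $|z-x|\le d_\alpha(x)/4$ and, for $0<\varepsilon<d_\alpha(x)/2$, set $d:=d_\alpha(x)/2-\varepsilon$. The endpoint bookkeeping shows that the bound $|z-x|\le d_\alpha(x)/4$ is exactly what is needed to force $\Delta_z(d)\subseteq\Delta_x(d_\alpha(x)-\varepsilon)$, so the second assertion of Lemma~\ref{max_interval} gives
\begin{equation*}
	\mu(\Delta_z(d))\le\mu(\Delta_x(d_\alpha(x)-\varepsilon))<\frac{1}{\alpha(d_\alpha(x)-\varepsilon)}<\frac{1}{\alpha d},
\end{equation*}
the last inequality because $d_\alpha(x)-\varepsilon>d_\alpha(x)/2-\varepsilon=d$. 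By \eqref{old_def} this yields $d\le d_\alpha(z)$, i.e.\ $d_\alpha(x)/2-\varepsilon\le d_\alpha(z)$; letting $\varepsilon\to 0$ gives $d_\alpha(x)\le 2 d_\alpha(z)$.

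The interval arithmetic is routine, and the only mildly delicate point — also the reason the two halves are not quite symmetric — is that Lemma~\ref{max_interval} supplies a strict upper bound only for the \emph{shrunken} interval $\Delta_x(d_\alpha(x)-\varepsilon)$, not for $\Delta_x(d_\alpha(x))$ itself, whose mass may be large when $x$ carries an atom. This is what forces, in the second step, working with $\Delta_x(d_\alpha(x)-\varepsilon)$ and passing to the limit, and it is what pins down the admissible choice $d=d_\alpha(x)/2-\varepsilon$ against the radius $d_\alpha(x)/4$ of the $z$-window. I expect that to be the main thing to get right; everything else is monotonicity of $\mu$.
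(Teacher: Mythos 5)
Your proof is correct and follows essentially the same route as the paper: the same inclusions $\Delta_x(d_\alpha(x))\subset\Delta_y(2d_\alpha(x))$ and $\Delta_z(d_\alpha(x)/2-\varepsilon)\subset\Delta_x(d_\alpha(x)-\varepsilon)$, combined with Lemma~\ref{max_interval} and the $\varepsilon\to 0$ limit for the atom issue. The only cosmetic differences are your reformulation via $d_\alpha$ and the use of \eqref{old_def} instead of the infimum definition of $q^*_\alpha$ directly.
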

	\begin{proof}
		First, we prove the estimate on the right-hand side. Let $y \in \Delta_x(d_\alpha(x))$. Then, $\Delta_x(d_\alpha(x)) \subset \Delta_y(2d_\alpha(x))$. Hence, by Lemma \ref{max_interval},
		\begin{equation*}
			\mu\left(\Delta_y(2d_\alpha(x))\right) \geq \mu\left(\Delta_x(d_\alpha(x))\right) \geq \frac{1}{\alpha d_\alpha(x)} \geq \frac{1}{2\alpha d_\alpha(x)}.
		\end{equation*}
		Therefore, by the definition of $q_\alpha^*$,
		\begin{equation*}
			q_\alpha^*(y) \geq \frac{1}{4d_\alpha^2(x)} = \frac{1}{4}q^*_\alpha(x).
		\end{equation*}
		To show the estimate on the left-hand side, we let $ z \in \Delta_x\left(d_\alpha(x)/2\right)$. For sufficiently small $\varepsilon>0$, the following inclusion holds true:
		\begin{equation*}
			\Delta_z\left(\frac{d_\alpha(x) - 2\varepsilon}{2}\right)\subset
			\Delta_x(d_\alpha(x) - \varepsilon).
		\end{equation*}
		Therefore, by Lemma \ref{max_interval}, 
		\begin{equation*}
			\mu\left( \Delta_z\left(\frac{d_\alpha(x) - 2\varepsilon}{2}\right) \right)
			\leq \mu\left( \Delta_x(d_\alpha(x) - \varepsilon) \right) < \frac{1}{\alpha(d_\alpha(x) - \varepsilon)} < \frac{1}{\alpha\left(\frac{d_\alpha(x) - 2\varepsilon}{2}\right)}.
		\end{equation*}
		Since, $q_\alpha^*(z) \leq \frac{4}{(d_\alpha(x) - 2\varepsilon)^2}$ for all sufficiently small $\varepsilon>0$, by the definition of $q_\alpha^*$,  we arrive at
		$  q_\alpha^*(z) \leq \frac{4}{d_\alpha^2(x)} = 4q_\alpha^*(x)$,
		just what we need. 
	\end{proof}

	Next, we give a reformulation of the Brinck condition \eqref{Brinck} in terms of $q_\alpha^*$:
	\begin{lemma}\label{Brinc_it_terms_q*}
		Let $\mu$ be a non-negative Radon measure. The following statements are equivalent:
		\begin{enumerate}
			\item \label{semi_boud_q_2}For any $\alpha>0$, $q^*_\alpha$ is a bounded function.
			\item \label{semi_boud_mesure_2} The  condition \eqref{Brinck} is satisfied.
		\end{enumerate}
	\end{lemma}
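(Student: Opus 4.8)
The plan is to prove the two implications separately, the link in both directions being the translation of ``$q^*_\alpha$ bounded'' into ``the window widths $d_\alpha(x)=1/\sqrt{q^*_\alpha(x)}$ are uniformly bounded away from $0$''.

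\emph{Step 1: \eqref{semi_boud_mesure_2} $\Rightarrow$ \eqref{semi_boud_q_2}.} Fix $\alpha>0$ and let $C$ be a constant as in \eqref{Brinck}. Set $d_0:=\min\{1,\tfrac{1}{\alpha C}\}$. Since $d_0\le 1$, every centered interval $\Delta_x(d_0)$ is contained in some unit interval, so \eqref{Brinck} yields $\mu(\Delta_x(d_0))<C\le \tfrac{1}{\alpha d_0}$ for every $x\in\mathbb{R}$. Hence $d_0$ lies in the set over which the infimum in \eqref{Otelb.function} is taken, and therefore $q^*_\alpha(x)\le 1/d_0^2$ for all $x$, i.e.\ $q^*_\alpha$ is bounded. (Here one only has to be mildly careful to keep the inequality strict: $\mu(\Delta_x(d_0))<C$ is strict, which suffices even when $d_0=\tfrac{1}{\alpha C}$.)

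\emph{Step 2: \eqref{semi_boud_q_2} $\Rightarrow$ \eqref{semi_boud_mesure_2}.} It is enough to use boundedness of $q^*_\alpha$ for one fixed $\alpha$, since \eqref{Brinck} does not involve $\alpha$; say $q^*_\alpha\le M$, so $d_\alpha(x)\ge \delta:=1/\sqrt M>0$ for all $x$. The key input is the second assertion of Lemma \ref{max_interval}: writing $d=d_\alpha(x)-\varepsilon$ and letting $\varepsilon$ run through $(0,d_\alpha(x))$ shows that $\mu(\Delta_x(d))<\tfrac{1}{\alpha d}$ for \emph{every} $0<d<d_\alpha(x)$, in particular for $d=\delta/2$ (for all $x$). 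Now cover $[x,x+1]$ by $N:=\lceil 2/\delta\rceil$ translates of $\Delta_{\,\cdot\,}(\delta/2)$, for instance $\Delta_{y_j}(\delta/2)$ with $y_j=x+(j-\tfrac12)\tfrac{\delta}{2}$, $j=1,\dots,N$, whose union contains $[x,x+1]$. Summing the bound above over $j$ gives
\begin{equation*}
\mu([x,x+1])\le \sum_{j=1}^{N}\mu\bigl(\Delta_{y_j}(\delta/2)\bigr)< N\cdot\frac{2}{\alpha\delta}=\frac{2\lceil 2/\delta\rceil}{\alpha\delta}=:C,
\end{equation*}
a constant independent of $x$, which is precisely \eqref{Brinck}. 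Overlaps of the covering intervals at endpoints only inflate the middle sum, so they cause no harm.

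\emph{Main obstacle.} There is no deep obstacle here; the proof is essentially bookkeeping once the right reformulation is in place. The one point that is not literally immediate from Definition \ref{def} is the implication ``$d<d_\alpha(x)\Rightarrow \mu(\Delta_x(d))<\tfrac1{\alpha d}$'', because the infimum defining $q^*_\alpha$ need not be attained; this is exactly what Lemma \ref{max_interval} supplies, so I would invoke it rather than argue from the infimum directly. Beyond that, the only care needed is the strict-versus-nonstrict inequality bookkeeping in the choice of $d_0$ in Step 1 and in the use of closed covering intervals in Step 2.
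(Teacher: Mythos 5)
Your proposal is correct and takes essentially the same approach as the paper: the implication \eqref{semi_boud_q_2} $\Rightarrow$ \eqref{semi_boud_mesure_2} is the paper's argument (a uniform lower bound on $d_\alpha$, the second assertion of Lemma \ref{max_interval}, and a finite covering of $[x,x+1]$ by short intervals). For \eqref{semi_boud_mesure_2} $\Rightarrow$ \eqref{semi_boud_q_2} you argue directly from Definition \ref{def} by exhibiting an admissible width $d_0$, whereas the paper runs the contrapositive through the first assertion of Lemma \ref{max_interval}; this is only a cosmetic difference.
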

	\begin{proof}
		If \eqref{semi_boud_q_2} does not hold, there exists  a sequence $\{x_k\}$ such that $q_\alpha^*(x_k)>k$, or equivalently, $d_\alpha(x_k)< 1/\sqrt{k}$. By Lemma \ref{max_interval},
		\begin{equation*}
			\mu\left(\Delta_{x_k}(1)\right) \geq \mu\left(\Delta_{x_k}(d_\alpha(x_k))\right) > \frac{1}{\alpha d_\alpha(x_k)} > \frac{\sqrt{k}}{\alpha},
		\end{equation*}
		and hence, \eqref{semi_boud_mesure_2} is violated.
		
		On the opposite, if  \eqref{semi_boud_q_2} holds,  so that  $q_\alpha^*(x)<C$ for all $x\in \mathbb{R}$, it follows, $d_\alpha(x) > 1/\sqrt{C}$, and Lemma \ref{max_interval} implies
		\begin{equation*}
			\mu\left(\Delta_x\left(1/\sqrt{C}\right)\right)<\frac{\sqrt{C}}{\alpha}.
		\end{equation*}
		Therefore,
		\begin{equation*}
			\mu([x,x+1]) < ([\sqrt{C}]+1) \frac{\sqrt{C}}{\alpha},
		\end{equation*}
		which gives \eqref{semi_boud_mesure_2}.  
	\end{proof}
	
	From Lemmas \ref{semiboundedness} and \ref{Brinc_it_terms_q*}, we derive the following criterion for semi-boun\-ded\-ness of the quadratic form  $\mathbf{a}_\mu$ in terms of $q_\alpha^*$.
	
	\begin{corollary}
		Let $\mu$ be a non-negative Radon measure. Then the sesquilinear form $\mathbf{a}_\mu$ is semi-bounded if and only if for any fixed $\alpha>0$, the function $q_\alpha^*$ is bounded. 
	\end{corollary}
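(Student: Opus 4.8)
The plan is to obtain the corollary by transitively combining the two equivalences proved immediately above, so that essentially no new work is required: the statement is a bookkeeping consequence of Lemmas~\ref{semiboundedness} and \ref{Brinc_it_terms_q*}.

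First I would record that semi-boundedness of $\mathbf{a}_\mu$ is equivalent to the Brinck condition \eqref{Brinck}. The implication \eqref{Brinck}~$\Rightarrow$~($\mathbf{a}_\mu$ semi-bounded) is contained in Lemma~\ref{semiboundedness}, which in fact yields the stronger conclusion that $\mathbf{a}_\mu$ is lower semi-bounded and closed. For the reverse implication I would point to the construction already carried out in the proof of Lemma~\ref{semiboundedness}: if \eqref{Brinck} fails, then for each $C>0$ one exhibits a compactly supported $u\in H^1(\mathbb R)$ with $\mathbf{a}_\mu[u]\le -C\|u\|^2$; hence failure of \eqref{Brinck} forces $\mathbf{a}_\mu$ to be unbounded below, and by contraposition semi-boundedness of $\mathbf{a}_\mu$ already implies \eqref{Brinck}. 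This is the only point requiring a moment's care, since Lemma~\ref{semiboundedness} is phrased with ``lower semi-bounded \emph{and} closed'' whereas the corollary mentions only semi-boundedness; the observation above shows that, for the sign-definite form $\mathbf{a}_\mu$, the two hypotheses coincide.

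Second, I would invoke Lemma~\ref{Brinc_it_terms_q*}, which asserts that \eqref{Brinck} holds if and only if $q_\alpha^*$ is a bounded function, for any fixed $\alpha>0$ (and the proof there makes clear this does not depend on the particular choice of $\alpha>0$). Concatenating the two biconditionals, $\mathbf{a}_\mu$ semi-bounded $\iff$ \eqref{Brinck} $\iff$ $q_\alpha^*$ bounded, gives precisely the statement of the corollary. I do not anticipate any genuine obstacle here; the only subtlety is the matching of the semi-boundedness hypotheses noted in the previous paragraph, which is handled by the contrapositive reading of the proof of Lemma~\ref{semiboundedness}.
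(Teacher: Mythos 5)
Your proposal is correct and follows essentially the same route as the paper, which states the corollary directly as the concatenation of Lemma~\ref{semiboundedness} and Lemma~\ref{Brinc_it_terms_q*}. Your extra remark that plain semi-boundedness already forces \eqref{Brinck} (via the contrapositive construction in the proof of Lemma~\ref{semiboundedness}), so that the ``and closed'' clause causes no mismatch, is exactly the right way to reconcile the wording and is implicit in the paper's argument.
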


	Next, we study the behavior of the Otelbaev function at infinity.
	\begin{lemma}\label{q*_and_int}
		For any $\alpha>0,$ the following statements are equivalent:
		\begin{enumerate}
			\item $q^*_\alpha(x) \rightarrow 0$ as $x\rightarrow \infty$; \label{itemone_q*_and_int}
			\item \label{itemtwo_q*_and_int} For any fixed $d>0$, $\mu\left(\Delta_x(d)\right) \rightarrow 0$ as $x\rightarrow \infty$.
		\end{enumerate}
		The second property is just a re-formulation of \eqref{MazCond}. Therefore, both properties above, by Theorem \ref{Maz}, are equivalent to the discreteness of the negative spectrum of the operator $\mathbf{H}=-\frac{d^2}{dx^2}-\mu$.
	\end{lemma}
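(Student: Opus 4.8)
The plan is to rephrase both conditions through the radius $d_\alpha(x) = 1/\sqrt{q^*_\alpha(x)}$, so that statement \eqref{itemone_q*_and_int} becomes literally $d_\alpha(x)\to\infty$ as $x\to\infty$, and then to squeeze $\mu(\Delta_x(d))$ between the two one-sided estimates of Lemma \ref{max_interval}, using only the monotonicity $\mu(\Delta_x(d))\le\mu(\Delta_x(d'))$ for $0<d\le d'$. We may assume $\mu\ne 0$, the case $\mu=0$ being trivial; then $0<d_\alpha(x)<\infty$ for every $x$ by the positivity of $q_\alpha^*$ established above, so Lemma \ref{max_interval} applies at every point, and $q_\alpha^*(x)\to 0$ is equivalent to $d_\alpha(x)\to\infty$.

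For the implication \eqref{itemone_q*_and_int} $\Rightarrow$ \eqref{itemtwo_q*_and_int} I would fix $d>0$ and an auxiliary number $M>d$. Since $d_\alpha(x)\to\infty$, there is $X(M)$ with $d_\alpha(x)>M$ for $x>X(M)$; putting $\varepsilon:=d_\alpha(x)-M\in(0,d_\alpha(x))$, the second inequality of Lemma \ref{max_interval} yields
\[
\mu(\Delta_x(d))\le\mu(\Delta_x(M))=\mu\bigl(\Delta_x(d_\alpha(x)-\varepsilon)\bigr)<\frac{1}{\alpha(d_\alpha(x)-\varepsilon)}=\frac{1}{\alpha M}
\]
for all $x>X(M)$. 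Hence $\limsup_{x\to\infty}\mu(\Delta_x(d))\le\frac{1}{\alpha M}$, and letting $M\to\infty$ gives \eqref{itemtwo_q*_and_int}; the tail $x\to-\infty$ is handled identically.

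For \eqref{itemtwo_q*_and_int} $\Rightarrow$ \eqref{itemone_q*_and_int} I would argue by contradiction: if $q_\alpha^*(x)\not\to 0$, choose $\epsilon_0>0$ and a sequence $x_k\to\infty$ with $q_\alpha^*(x_k)\ge\epsilon_0$, i.e.\ $d_\alpha(x_k)\le D:=1/\sqrt{\epsilon_0}$ for all $k$. The first inequality of Lemma \ref{max_interval}, together with $\Delta_{x_k}(d_\alpha(x_k))\subset\Delta_{x_k}(D)$, gives
\[
\mu(\Delta_{x_k}(D))\ge\mu\bigl(\Delta_{x_k}(d_\alpha(x_k))\bigr)\ge\frac{1}{\alpha d_\alpha(x_k)}\ge\frac{1}{\alpha D}
\]
for every $k$, contradicting $\mu(\Delta_x(D))\to 0$ as $x\to\infty$. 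Thus $q_\alpha^*(x)\to 0$.

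I do not anticipate a genuine obstacle: the whole argument is this two-sided sandwich between the estimates of Lemma \ref{max_interval}. The one subtlety to keep in mind is that, in the presence of point masses, $d_\alpha(x)$ need not satisfy $\mu(\Delta_x(d))=\frac{1}{\alpha d}$ exactly (cf.\ Remark \ref{rem_cont_meas} and the example following it), which is precisely why one must invoke both one-sided inequalities of Lemma \ref{max_interval} rather than an equality. The final assertion of the statement requires nothing further: \eqref{itemtwo_q*_and_int} is a verbatim restatement of \eqref{MazCond}, which by Remark \ref{rem_discr} and Theorem \ref{Maz} is equivalent to compactness of $\mub$ in $H^1(\R^1)$, hence to discreteness of the negative spectrum of $-\frac{d^2}{dx^2}-\mu$.
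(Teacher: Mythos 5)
Your proof is correct and follows essentially the same route as the paper: both directions rest on the two one-sided inequalities of Lemma \ref{max_interval} combined with monotonicity of $d\mapsto\mu(\Delta_x(d))$, with the equivalence $q_\alpha^*(x)\to 0 \Leftrightarrow d_\alpha(x)\to\infty$ doing the translation. The only cosmetic difference is that you prove \eqref{itemone_q*_and_int}$\Rightarrow$\eqref{itemtwo_q*_and_int} directly via a $\limsup$ and $M\to\infty$ while the paper argues by contradiction with a fixed $\varepsilon$; the substance is identical, and your explicit handling of the trivial case $\mu=0$ (ensuring $d_\alpha(x)<\infty$) is a harmless extra precaution.
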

	\begin{proof}
		Assume that \eqref{itemone_q*_and_int} does not hold. Then, $q^*_\alpha(x_k)>C$ for some  sequence $\{x_k\}$ tending to infinity and some $C>0$. Therefore, $d_\alpha(x_k)< 1/\sqrt{C}$. Now, by Lemma \ref{max_interval}, 
		\begin{equation*}
			\mu\left(\Delta_{x_k}\left(1/\sqrt{C}\right)\right) \geq \mu\left(\Delta_{x_k}(d_\alpha(x_k))\right) \geq \frac{1}{\alpha d_\alpha(x_k)} \geq \frac{\sqrt{C}}{\alpha},
		\end{equation*}
		which contradicts $(2)$.
		
		Conversely, if \eqref{itemone_q*_and_int} holds but \eqref{itemtwo_q*_and_int} does not,  there exist a sequence $\{x_k\}$ tending to infinity and constants $d$, $C>0$ such that
		\begin{equation}\label{intq_large_C}
			\mu\left(\Delta_{x_k}(d)\right) > C.
		\end{equation}
		We choose $\varepsilon>0$ such that 
		\begin{equation}\label{d_epsilon_C_relation}
			\frac{d}{2} < \frac{1}{2\sqrt{\varepsilon}}, \qquad \frac{\sqrt{\varepsilon}}{\alpha} < C.
		\end{equation}
		Since \eqref{itemone_q*_and_int} holds, there exists $N>0$ such that $q_\alpha^*(x)<\varepsilon$ for all $|x| >N$. Therefore, $d_\alpha(x)> 1/\sqrt{\varepsilon}$ for all $|x| >N$. Relation \eqref{d_epsilon_C_relation} and the second part of Theorem \ref{max_interval} imply now
		\begin{equation*}
			\mu\left(\Delta_{x_k}(d)\right) \leq \mu\left(\Delta_{x_k}\left(1/\sqrt{\varepsilon}\right)\right) \leq \frac{\sqrt{\varepsilon}}{\alpha} < C,
		\end{equation*}
		for sufficiently large $k$, such  that $|x_k| > N$. This contradicts \eqref{intq_large_C}.
	\end{proof}
	For different values of the averaging parameter,  Otelbaev's functions are equivalent:
	
	\begin{lemma}\label{lem_qalpha_qbeta_equiv}
		Let $\mu$ be a non-negative Radon measure. If $0<\beta<\alpha$, then
		\begin{equation*}
			q_\beta^*(x) \leq q_\alpha^*(x) \leq \frac{\alpha^2}{\beta^2} q_\beta^*(x),
		\end{equation*}
		for all $x\in \mathbb{R}$.
	\end{lemma}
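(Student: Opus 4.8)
The plan is to pass to the reciprocal quantity $d_\alpha(x)=1/\sqrt{q_\alpha^*(x)}$ (the case $\mu=0$ being trivial, since then $q^*_\alpha\equiv 0$) and to prove the equivalent chain
\begin{equation*}
 d_\alpha(x)\leq d_\beta(x)\leq \frac{\alpha}{\beta}\, d_\alpha(x),\qquad 0<\beta<\alpha ,
\end{equation*}
after which squaring and inverting, via $q^*_\gamma(x)=1/d_\gamma(x)^2$, yields exactly $q_\beta^*(x)\leq q_\alpha^*(x)\leq\frac{\alpha^2}{\beta^2}q_\beta^*(x)$.

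For the left-hand inequality I would argue straight from Definition \ref{def}: since $\beta<\alpha$ we have $\frac{1}{\alpha d}<\frac{1}{\beta d}$ for every $d>0$, so any $d$ with $\mu(\Delta_x(d))<\frac{1}{\alpha d}$ also satisfies $\mu(\Delta_x(d))<\frac{1}{\beta d}$. Thus the admissible set defining $q^*_\alpha(x)$ is contained in the one defining $q^*_\beta(x)$, and the infimum of $1/d^2$ over the larger set is no larger; equivalently $d_\alpha(x)\leq d_\beta(x)$.

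The right-hand inequality is the substantive step. Here I would use the alternative description \eqref{old_def}, namely $d_\beta(x)=\sup\{d:\mu(\Delta_x(d))\leq\frac{1}{\beta d}\}$, and show that no $d>\frac{\alpha}{\beta}d_\alpha(x)$ lies in that set. Fix such a $d$. Since $\frac{\alpha}{\beta}>1$ we have $d>d_\alpha(x)$, so monotonicity of $\mu$ together with the first assertion of Lemma \ref{max_interval} gives $\mu(\Delta_x(d))\geq\mu(\Delta_x(d_\alpha(x)))\geq\frac{1}{\alpha d_\alpha(x)}$. On the other hand $d>\frac{\alpha}{\beta}d_\alpha(x)$ rearranges to $\frac{1}{\beta d}<\frac{1}{\alpha d_\alpha(x)}$, whence $\mu(\Delta_x(d))>\frac{1}{\beta d}$, so $d$ is indeed not admissible. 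Taking the supremum gives $d_\beta(x)\leq\frac{\alpha}{\beta}d_\alpha(x)$.

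I do not expect a real obstacle; the only points needing a little care are to invoke Lemma \ref{max_interval} at $d_\alpha(x)$ itself (not at $d_\alpha(x)-\varepsilon$, where the inequality goes the other way) and to use the non-strict form \eqref{old_def} of the supremum defining $d_\beta$, so that the strict inequality $\mu(\Delta_x(d))>\frac{1}{\beta d}$ genuinely rules $d$ out.
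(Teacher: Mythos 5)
Your proof is correct and follows essentially the same route as the paper: both reduce the claim to the comparison $d_\beta(x)\leq\frac{\alpha}{\beta}d_\alpha(x)$ and derive it from the lower bound $\mu(\Delta_x(d_\alpha(x)))\geq\frac{1}{\alpha d_\alpha(x)}$ of Lemma \ref{max_interval}. The only (harmless) difference is that you exclude all $d>\frac{\alpha}{\beta}d_\alpha(x)$ from the supremum \eqref{old_def}, whereas the paper perturbs $d_\beta(x)$ by $\varepsilon$ and uses the second part of Lemma \ref{max_interval} before letting $\varepsilon\to 0$.
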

	
	\begin{proof}
		For a fixed $x\in\mathbb{R}$, it is clear that $\alpha\mapsto q_\alpha^*(x)$ is an increasing function, so that $q_\beta^*(x) \leq q_\alpha^*(x)$. If $q_\beta^*(x) = q_\alpha^*(x)$,  the statement is obviously true. Otherwise, if $q_\beta^*(x) < q_\alpha^*(x)$, then $d_\alpha(x) < d_\beta(x)$, this means that  $d_\alpha(x) < d_\beta(x) -\varepsilon$ for some sufficiently small $\varepsilon>0$. Therefore, by Lemma \ref{max_interval},
		\begin{equation*}
			\frac{1}{\alpha d_\alpha(x)} \leq \mu(\Delta_x(d_\alpha(x))) \leq \mu(\Delta_x(d_\beta(x) - \varepsilon)) \leq \frac{1}{\beta (d_\beta(x) - \varepsilon)},
		\end{equation*}
		for any sufficiently small $\varepsilon>0$. Thus,
		\begin{equation*}
			\frac{1}{ d_\alpha(x)} \leq \frac{\alpha}{\beta d_\beta(x)}.
		\end{equation*}
		Recalling the relation between $q_\alpha^*$ and $d_\alpha$ completes the proof.
	\end{proof}

	The following lemma concerns the dependence of $q_\alpha^*$ on the choice of the measure.
	
	\begin{lemma}\label{lem_est_lin}
		Let $\mu_1$ and $\mu_2$ be non-negative Radon measures and $\mu=\mu_1+\mu_2$. Then, for any $\alpha>0$, 
		\begin{equation}\label{est_lin}
			q_{\alpha,\mu}^*(x) \leq 2 \left(q_{\alpha,\mu_1}^*(x) + q_{\alpha,\mu_2}^*(x)\right).
		\end{equation}
	\end{lemma}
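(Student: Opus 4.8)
The plan is to reduce the claim to finding, for a fixed $x$ and $\alpha$, a length $\ell$ as large as possible with $\mu(\Delta_x(\ell)) < \frac{1}{\alpha\ell}$; any such $\ell$ gives $q^\ast_{\alpha,\mu}(x) \le 1/\ell^2$ by the definition of $q^\ast_\alpha$. The key idea is to split the ``budget'' on the right: for a parameter $a\in(0,1)$ with $b:=1-a$, if $\mu_1(\Delta_x(\ell)) < \frac{a}{\alpha\ell}$ and $\mu_2(\Delta_x(\ell)) < \frac{b}{\alpha\ell}$, then adding gives $\mu(\Delta_x(\ell)) < \frac{1}{\alpha\ell}$. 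Now $\mu_1(\Delta_x(\ell)) < \frac{a}{\alpha\ell}$ is literally $\mu_1(\Delta_x(\ell)) < \frac{1}{(\alpha/a)\ell}$, so by Lemma \ref{max_interval} applied with averaging parameter $\alpha/a$ it holds for every $\ell < d_{\alpha/a,\mu_1}(x)$; and by Lemma \ref{lem_qalpha_qbeta_equiv} with $\alpha<\alpha/a$ (legitimate since $a<1$) one has $q^\ast_{\alpha/a,\mu_1}(x)\le \frac{1}{a^2}q^\ast_{\alpha,\mu_1}(x)$, i.e.\ $d_{\alpha/a,\mu_1}(x)\ge a\,d_{\alpha,\mu_1}(x)$. (Alternatively one may first reduce to $\alpha=1$ via $q^\ast_{\alpha,\mu}=q^\ast_{1,\alpha\mu}$ and $\alpha\mu=\alpha\mu_1+\alpha\mu_2$.)

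Concretely I would proceed as follows. First dispose of the trivial case $\mu_1=0$ or $\mu_2=0$, where \eqref{est_lin} is immediate. Otherwise set $d_i:=d_{\alpha,\mu_i}(x)\in(0,\infty)$ and $q_i:=q^\ast_{\alpha,\mu_i}(x)=1/d_i^2$. By the paragraph above, for any $\ell < a\,d_1$ we get $\mu_1(\Delta_x(\ell)) < \frac{a}{\alpha\ell}$, and for any $\ell < b\,d_2$ we get $\mu_2(\Delta_x(\ell)) < \frac{b}{\alpha\ell}$; hence for every $\ell < \min(a d_1, b d_2)$ one has $\mu(\Delta_x(\ell)) < \frac{1}{\alpha\ell}$, and letting $\ell \uparrow \min(a d_1, b d_2)$ in the definition of $q^\ast_{\alpha,\mu}$ gives
\[
q^\ast_{\alpha,\mu}(x) \le \frac{1}{\min(a d_1, b d_2)^2} = \max\Big(\tfrac{q_1}{a^2},\ \tfrac{q_2}{b^2}\Big).
\]
Finally I would optimize over $a$: the choice $a=\frac{\sqrt{q_1}}{\sqrt{q_1}+\sqrt{q_2}}$, $b=\frac{\sqrt{q_2}}{\sqrt{q_1}+\sqrt{q_2}}$ equalizes the two terms and yields $q^\ast_{\alpha,\mu}(x)\le (\sqrt{q_1}+\sqrt{q_2})^2 = q_1+2\sqrt{q_1 q_2}+q_2 \le 2(q_1+q_2)$, which is \eqref{est_lin}.

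The only genuinely delicate point is the step ``$\mu_1(\Delta_x(\ell)) < \frac{a}{\alpha\ell}$ for all $\ell < a d_1$'': it needs both Lemma \ref{max_interval} (to upgrade the defining infimum to a statement valid for every sufficiently small $\ell$) and the quantitative scaling of Otelbaev functions in the averaging parameter from Lemma \ref{lem_qalpha_qbeta_equiv}. That the threshold $a d_1$ cannot be improved is seen on $\mu_1=\delta_0$, $x=0$, so no cheaper estimate is available there. Everything else — the trivial case, adding the two inequalities, and the elementary (AM–GM) optimization over $a$ — is routine. I would also remark that the argument in fact delivers the slightly stronger bound $q^\ast_{\alpha,\mu}(x)\le \big(\sqrt{q^\ast_{\alpha,\mu_1}(x)}+\sqrt{q^\ast_{\alpha,\mu_2}(x)}\big)^2$, from which \eqref{est_lin} follows via $2\sqrt{q_1 q_2}\le q_1+q_2$.
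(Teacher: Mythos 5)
Your argument is correct, but it reaches the conclusion by a genuinely different route than the paper. The paper works directly with the lengths $d_{\alpha,\mu_j}(x)$: it uses the trivial monotonicity $d_{\alpha,\mu_j}(x)\ge d_{\alpha,\mu}(x)$ (since $\mu\ge\mu_j$), applies Lemma \ref{max_interval} at $d_{\alpha,\mu_j}(x)-\varepsilon$ and at $d_{\alpha,\mu}(x)$, and adds the two resulting inequalities to get
\begin{equation*}
\frac{1}{\alpha d_{\alpha,\mu_1}(x)}+\frac{1}{\alpha d_{\alpha,\mu_2}(x)}\ \ge\ \frac{1}{\alpha d_{\alpha,\mu}(x)},
\end{equation*}
i.e.\ the subadditivity $\sqrt{q_{\alpha,\mu}^*}\le\sqrt{q_{\alpha,\mu_1}^*}+\sqrt{q_{\alpha,\mu_2}^*}$, after a short case distinction when $d_{\alpha,\mu_j}(x)=d_{\alpha,\mu}(x)$; the lemma's bound then follows from $(s+t)^2\le 2(s^2+t^2)$, exactly as in your last line. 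You instead split the budget $\frac{1}{\alpha\ell}=\frac{a}{\alpha\ell}+\frac{b}{\alpha\ell}$, reinterpret $\frac{a}{\alpha\ell}$ as the threshold for the averaging parameter $\alpha/a$, invoke Lemma \ref{lem_qalpha_qbeta_equiv} to get $d_{\alpha/a,\mu_1}(x)\ge a\,d_{\alpha,\mu_1}(x)$, and then optimize over $a$; this recovers the same sharp intermediate bound $q_{\alpha,\mu}^*(x)\le\bigl(\sqrt{q_{\alpha,\mu_1}^*(x)}+\sqrt{q_{\alpha,\mu_2}^*(x)}\bigr)^2$. Your use of Lemma \ref{lem_qalpha_qbeta_equiv} is legitimate (it precedes this lemma in the paper), the degenerate cases $\mu_j=0$ are handled, and positivity of $q_{\alpha,\mu_j}^*(x)$ for $\mu_j\neq 0$ makes the optimal choice of $a$ admissible. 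The paper's proof is shorter and avoids both the extra lemma and the optimization step, while yours is a bit more modular: the budget-splitting argument makes the square-root subadditivity transparent and extends verbatim to finitely many summands, at the cost of routing through the $\alpha$-dependence of Otelbaev's function where the paper only needs Lemma \ref{max_interval} and monotonicity in the measure.
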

	\begin{proof}
		By Lemma \ref{max_interval},
		\begin{equation}
			\mu_j(\Delta_x(d_{\alpha,\mu_j}(x))) \geq \frac{1}{\alpha d_{\alpha,\mu_j}(x)},
			\quad
			\text{for } x\in \mathbb{R}, \; j=1,2.
		\end{equation}
		We note that 
		\begin{equation}\label{mu 12}
			d_{\alpha,\mu_j}(x) \geq d_{\alpha,\mu}(x)
			\quad
			\text{for } j=1,2.
		\end{equation}
		If $d_{\alpha,\mu_1}(x)  = d_{\alpha,\mu}(x) $, then $q_{\alpha,\mu_1}^*(x) = q_{\alpha,\mu}^*(x)$ and \eqref{est_lin} holds. If the estimate in \eqref{mu 12} is strict for both $j=1,2$, i.e., for some $\varepsilon>0$, 
		\begin{equation*}
			d_{\alpha,\mu_j}(x) - \varepsilon> d_{\alpha,\mu}(x),
		\end{equation*}
		then, by Lemma \ref{max_interval},
		\begin{multline*}
			\frac{1}{\alpha (d_{\alpha,\mu_1}(x) - \varepsilon)} + \frac{1}{\alpha (d_{\alpha,\mu_2}(x) - \varepsilon)} > \mu_1(\Delta_x(d_{\alpha,\mu_1}(x) - \varepsilon)) + \mu_2(\Delta_x(d_{\alpha,\mu_2}(x) - \varepsilon))\\
			\geq \mu_1(\Delta_x(d_{\alpha,\mu}(x))) + \mu_2(\Delta_x(d_{\alpha,\mu}(x))) = \mu(\Delta_x(d_{\alpha,\mu}(x))) \geq  \frac{1}{\alpha d_{\alpha,\mu}(x) }
		\end{multline*}
		holds true for any sufficiently small $\varepsilon>0$. Now \eqref{est_lin} follows from
		\begin{equation*}
			\frac{1}{\alpha (d_{\alpha,\mu_1}(x) )} + \frac{1}{\alpha (d_{\alpha,\mu_2}(x) )} \geq \frac{1}{\alpha d_{\alpha,\mu}(x) }.
		\end{equation*}
	\end{proof}
	
	We conclude this section by showing that Otelbaev's function satisfies a certain lower bound for its decay at infinity:
	\begin{proposition}
		Let $0 \neq \mu$ be a non-negative Radon measure. Then, 
		\begin{align*} 
			\liminf_{|x| \to \infty} q_\alpha^\ast(x) |x|^2 \geq \frac{1}{4}.
		\end{align*}
	\end{proposition}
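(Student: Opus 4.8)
The plan is a proof by contradiction that reduces the claim to the statement that $d_\alpha(x) = q_\alpha^\ast(x)^{-1/2}$ cannot exceed $2|x|$ by a fixed proportion along any sequence tending to infinity. Suppose, to the contrary, that $\liminf_{|x|\to\infty} q_\alpha^\ast(x)\,|x|^2 < \tfrac14$. Then there are $\delta\in(0,\tfrac14)$ and points $x_k$ with $|x_k|\to\infty$ and $q_\alpha^\ast(x_k)\,|x_k|^2<\delta$. Since $q_\alpha^\ast>0$ everywhere (because $\mu\ne 0$), we may rewrite this, for $k$ large enough that $x_k\ne 0$, as $d_\alpha(x_k) > |x_k|/\sqrt\delta$. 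As $1/\sqrt\delta>2$, fix a number $\rho\in\big(2,\,1/\sqrt\delta\big)$ and set $d_k:=\rho\,|x_k|$; then $0<d_k<d_\alpha(x_k)$ for all large $k$.

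The first key step is to apply the second assertion of Lemma \ref{max_interval} with $x=x_k$ and $\varepsilon=d_\alpha(x_k)-d_k>0$: since $d_k<d_\alpha(x_k)$,
\[
\mu\big(\Delta_{x_k}(d_k)\big) < \frac{1}{\alpha d_k} = \frac{1}{\alpha\rho\,|x_k|}\;\xrightarrow{\ k\to\infty\ }\;0 .
\]
The second key step is an elementary geometric remark: $\Delta_{x_k}(d_k)=\big[x_k-\tfrac{\rho}{2}|x_k|,\,x_k+\tfrac{\rho}{2}|x_k|\big]$, and because $\rho/2>1$ one of its endpoints tends to $+\infty$ and the other to $-\infty$ as $k\to\infty$, no matter whether $x_k\to+\infty$ or $x_k\to-\infty$; in fact $\Delta_{x_k}(d_k)$ contains the symmetric interval of radius $(\tfrac{\rho}{2}-1)|x_k|\to\infty$. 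Hence $\Delta_{x_k}(d_k)$ eventually contains every bounded set. Now invoke $\mu\ne 0$: inner regularity provides a compact $J$ with $\mu(J)>0$, and for all large $k$ we have $J\subset\Delta_{x_k}(d_k)$, so that $\mu(\Delta_{x_k}(d_k))\ge\mu(J)>0$, contradicting the displayed limit. This contradiction proves the proposition.

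I do not expect a genuine obstacle; the only point requiring a little care is the choice of the auxiliary radius $d_k$, which must simultaneously stay below $d_\alpha(x_k)$ (so that the defining strict inequality for $q_\alpha^\ast$, i.e.\ Lemma \ref{max_interval}, applies) and exceed $2|x_k|$ by a fixed factor (so that the \emph{centred} interval $\Delta_{x_k}(d_k)$ reaches across the origin and captures the fixed mass of $\mu$). The assumed failure of the inequality leaves exactly enough room for both conditions, and the constant $\tfrac14$ is optimal, as the example $\mu=\delta_0$ — for which $q_\alpha^\ast(x)\,|x|^2=\tfrac14$ for all sufficiently large $|x|$ — shows.
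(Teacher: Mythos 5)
Your argument is correct; the only inputs you use beyond the definition are the strict positivity of $q_\alpha^\ast$ (item (1) of the proposition preceding Lemma \ref{equiv_q*}, needed so that $d_\alpha(x_k)$ is finite) and the second half of Lemma \ref{max_interval}, both available at this point in the paper, and every step (the choice $\rho\in(2,1/\sqrt{\delta})$, the application of the lemma with $\varepsilon=d_\alpha(x_k)-\rho|x_k|$, the capture of a fixed compact set of positive mass) checks out. The route is, however, organized differently from the paper's. The paper argues directly: it first reduces to a compactly supported (hence finite) measure via the monotonicity $q_{\mu,\alpha}^\ast\ge q_{\mathbf 1_{[-R,R]}\mu,\alpha}^\ast$, and then tests the definition of $q_\alpha^\ast$ with the single length $d=2(x+R)$, for which $\mu(\Delta_x(d))=\mu(\mathbb R)>\frac{1}{\alpha d}$ once $x$ is large, giving the explicit bound $q_\alpha^\ast(x)\ge\frac{1}{4(x+R)^2}$ and hence the liminf. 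You instead run the contrapositive for a general $\mu$ at once: if $q_\alpha^\ast(x_k)|x_k|^2<\delta<\tfrac14$ along a sequence, then $d_\alpha(x_k)>\rho|x_k|$ with $\rho>2$, Lemma \ref{max_interval} forces $\mu(\Delta_{x_k}(\rho|x_k|))<\frac{1}{\alpha\rho|x_k|}\to 0$, yet these centred intervals eventually contain any fixed compact set of positive measure. The geometric core (an interval centred at a far-away $x$ of length a bit more than $2|x|$ reaches back to where the mass sits) is identical; what your version buys is the avoidance of the compact-support reduction and of the monotonicity-in-$\mu$ step, at the price of invoking Lemma \ref{max_interval} and positivity of $q_\alpha^\ast$, while the paper's version is closer to a bare-hands use of the definition and yields the quantitative intermediate bound $q_\alpha^\ast(x)\ge\frac{1}{4(x+R)^2}$. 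Your closing sharpness remark is consistent with the paper: the explicit formula for $q_2^\ast$ when $\mu=\delta_0$ (given in the examples section) has $q_2^\ast(x)x^2=\tfrac14$ for large $|x|$, and the remark following the proposition records the matching upper bound $\limsup q_\alpha^\ast(x)x^2\le 1$ for compactly supported $\mu$.
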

	\begin{proof}
		We first assume that $\mu$ is supported in some interval $[-R, R]$. Further, we only consider $x > 0$, since the case $x < 0$ can be dealt with analogously. 
		
		Since $\mu$ is a compactly supported Radon measure, it is finite. Assume that $x$ is large enough such that $2(x+R) > \frac{\mu(\mathbb R)}{\alpha}$. Then, with $d = 2(x+R)$, we obtain
		\begin{align*}
			\mu(\Delta_x(d)) = \mu([-R, 2x+R]) = \mu(\mathbb R) > \frac{1}{2\alpha(x+R)} = \frac{1}{\alpha d}.
		\end{align*}
		Hence, we see that for every $x$ sufficiently large,
		\begin{align*}
			q_\alpha^\ast(x) \geq \frac{1}{d^2} = \frac{1}{4(x+R)^2},
		\end{align*}
		which implies
		\begin{align*}
			\liminf_{x \to \infty} q_\alpha^\ast(x) \cdot x^2 \geq \liminf_{x \to \infty} \frac{x^2}{4(x+R)^2} = \frac{1}{4}.
		\end{align*}
		If $\mu$ is not compactly supported, then for arbitrary $R > 0$ we have $\mu \geq \mathbf 1_{[-R, R]} \mu$, so
		\begin{align*}
			q_{\mu, \alpha}^\ast(x) \geq q_{\mathbf 1_{[-R, R]}\mu, \alpha}^\ast(x),
		\end{align*}
		and therefore
		\begin{align*}
			\liminf_{x \to \infty} q_{\mu, \alpha}^\ast(x) \cdot x^2 \geq q_{\mathbf 1_{[-R, R]}\mu, \alpha}^\ast(x) \cdot x^2 \geq \frac{1}{4}.
		\end{align*}The estimate at $-\infty$ is dealt with analogously. This finishes the proof.
	\end{proof}
	\begin{remark}
		If $\mu$ is compactly supported, then we also have 
		\begin{align*}
			\limsup_{|x|\to \infty} q_\alpha^\ast(x) \cdot x^2 \leq 1,
		\end{align*}
		which will be clear from the discussion in Example \ref{ex:compsupp}.    
	\end{remark}

	\section{Eigenvalue estimates}
	
	Lemma \ref{q*_and_int} gave us a condition of discreteness of the negative spectrum in terms of the function $q_\alpha^*$. In terms of the same function, we now find 
	two-sided estimates for the eigenvalues of $\Hb_\mu$. The reasoning uses the standard bracketing idea, in the realization of,  e.g., \cite{Weidl}, adapted to general measures.

	\subsection{Auxiliary lemmas}\label{sec:decomposition} 
	
	We consider a covering of $\R$ by closed intervals which have common endpoints. If such endpoint coincides with the position of a point mass of the measure $\mu$ we split this point mass in two, in a special way, and assign each summand to the corresponding interval. Note that besides depending on $\mu$, the construction also depends on $\alpha > 0$. Further, we always assume that $\mu$ satisfies Brinck's condition \eqref{Brinck}.
	
	We start with $a_0 = 0$ and $\gamma_0 = 0$. If $\mu([a_0,+\infty)) = 0$, then we define $a_{1}= +\infty$. Otherwise, we set 
	\begin{equation*}
		a_{1} := \sup\left\{x\in (a_0,+\infty): \ \mu([a_0, x)) \leq \frac{1}{\alpha|x - a_0|} \right\}.
	\end{equation*}
	Since 
	\begin{align*}
		\mu([a_0, x)) \uparrow \mu([a_0, \infty)) > 0 \quad \text{ and } \quad \frac{1}{\alpha|x-a_0|} \downarrow 0 \quad \text{as }  x \to \infty,
	\end{align*} 
	$a_1$ is well-defined and finite. Further, since both $\mu([a_0, x)) \rightarrow \mu(\{ a_0\}) < \infty$ and $(\alpha|x-a_0|)^{-1} \rightarrow \infty$ continuously as $x \to 0$, we also obtain $a_1 > a_0$.
	
	Having constructed $a_1$ in this way, we have the following fact:
	\begin{lemma}\label{lem:gamma}
		If $a_1 < \infty$, then $\mu([a_0, a_1]) \geq \frac{1}{\alpha (a_1 - a_0)}$.
	\end{lemma}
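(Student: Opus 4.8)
The plan is to argue by contradiction, mirroring the proof of Lemma~\ref{max_interval}. Suppose that $a_1 < \infty$ but $\mu([a_0, a_1]) < \frac{1}{\alpha(a_1 - a_0)}$. The idea is to show that then the defining inequality $\mu([a_0, x)) \le \frac{1}{\alpha|x - a_0|}$ persists for some $x > a_1$, which contradicts the fact that $a_1$ is the supremum of the set of such $x$.

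For any $x > a_1$ we have the disjoint decomposition $[a_0, x) = [a_0, a_1] \cup (a_1, x)$, so
\[
\mu([a_0, x)) = \mu([a_0, a_1]) + \mu((a_1, x)).
\]
The second step is to check that $\mu((a_1, x)) \to 0$ as $x \downarrow a_1$. This is the right-sided analogue of Lemma~\ref{loc_finite}; it follows either from the same inner-regularity argument or simply from continuity of the measure from above, since $\bigcap_{x > a_1}(a_1, x] = \emptyset$ and $\mu((a_1, a_1 + 1]) < \infty$ by local finiteness, hence $\mu((a_1, a_1 + \tau]) \downarrow 0$, and a fortiori $\mu((a_1, x)) \to 0$.

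Combining these, $\mu([a_0, x)) \to \mu([a_0, a_1])$ as $x \downarrow a_1$, while $\frac{1}{\alpha(x - a_0)} \to \frac{1}{\alpha(a_1 - a_0)}$. Since by assumption $\mu([a_0, a_1]) < \frac{1}{\alpha(a_1 - a_0)}$ strictly, there is $x_0 > a_1$ with $\mu([a_0, x_0)) < \frac{1}{\alpha(x_0 - a_0)}$, so $x_0$ belongs to the set whose supremum defines $a_1$; but $x_0 > a_1$, a contradiction. Therefore $\mu([a_0, a_1]) \ge \frac{1}{\alpha(a_1 - a_0)}$.

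There is no genuine obstacle here; the only point requiring a little care is the bookkeeping with half-open versus closed intervals, so that a possible point mass of $\mu$ at $a_1$ is handled consistently — it is counted in $\mu([a_0, a_1])$ and also in every $\mu([a_0, x))$ with $x > a_1$, so it causes no discontinuity in the limit $x \downarrow a_1$. (One should also note that this same care already enters the construction of $a_1$: the set $\{x : \mu([a_0,x)) \le \frac{1}{\alpha|x-a_0|}\}$ need not be an interval when $\mu$ has atoms, but the supremum argument above does not use that.)
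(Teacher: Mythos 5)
Your argument is correct and is essentially the paper's own proof: both assume $\mu([a_0,a_1])<\frac{1}{\alpha(a_1-a_0)}$, use continuity of the measure from the right (i.e.\ $\mu([a_0,x))\to\mu([a_0,a_1])$ as $x\downarrow a_1$) to find a point $x_0>a_1$ still satisfying the defining inequality, and contradict the supremum definition of $a_1$. Your extra remarks on atoms at $a_1$ and on the continuity-from-above justification are just a more detailed spelling-out of the same step.
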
    
	\begin{proof}
		Assume it were $\mu([a_0, a_1]) < \frac{1}{\alpha (a_1 - a_0)}$. Since
		\begin{align*}
			\mu([a_0, a_1 + \varepsilon)) \overset{\varepsilon \to 0}{\longrightarrow} \mu([a_0, a_1]),
		\end{align*}
		we would have for some $\varepsilon' > 0$ sufficiently small:
		\begin{align*}
			\mu([a_0, a_1 + \varepsilon')) \leq \frac{1}{\alpha (a_1 + \varepsilon' - a_0)}.
		\end{align*}
		But this contradicts the definition of $a_1$.
	\end{proof}
	If $a_1 = \infty$, we set $I_0 = [a_0, \infty)$, define the measure $\mu_0$ on $I_0$ as the zero measure and end the construction here. If $a_1 < \infty$, we can set (based on the previous lemma) $\gamma_1 = \frac{1}{\alpha (a_1 - a_0)} - \mu([a_0, a_1)) \geq 0$. We define the interval $I_0$ as $I_0 = [a_0, a_1]$ and the measure $\mu_0$ on $I_0$ as $\mu_0 = \mu \cdot \mathbf 1_{[a_0, a_1)} + \gamma_1 \delta_{a_1}$. We now continue the construction inductively.
	After having constructed \emph{finitely} many points $a_0,\cdots,a_k$, intervals $I_0, \dots, I_{k-1}$, constants $\gamma_1, \dots, \gamma_k$ and measures $\mu_0, \dots, \mu_{k-1}$, we define $a_{k+1}$ analogously, that is: if $\mu([a_k,+\infty)) - \gamma_k= 0$ we set $a_{k+1}= +\infty$ and $I_k = [a_k, \infty)$, $\mu_k = 0$ on $I_k$. Otherwise
	\begin{equation*}
		a_{k+1} := \sup\left\{x\in (a_k,+\infty): \ \mu([a_k, x)) - \gamma_k \leq \frac{1}{\alpha|x - a_k|} \right\}.
	\end{equation*}
	As above, $a_{k+1}$ is well-defined. If $a_{k+1} = \infty$, we let $I_k = [a_k, \infty)$, $\mu_k = 0$ on $I_k$ and end the construction here. Otherwise, analogously to Lemma \ref{lem:gamma}, we can set $\gamma_{k+1} = \frac{1}{\alpha (a_{k+1} - a_k)} - \mu([a_k, a_{k+1})) + \gamma_k \geq 0$, $I_k = [a_k, a_{k+1}]$, $\mu_k$ is the measure on $I_k$ given by $\mu_k = \mu \cdot \mathbf 1_{[a_k, a_{k+1})} - \gamma_k \delta_{a_k} + \gamma_{k+1} \delta_{a_{k+1}}$.
	
	For $k \leq 0$, we repeat the construction symmetrically, i.e., we construct the points
	\begin{align*}
		a_{k-1} := \inf \left\{ x \in (-\infty, a_k): ~\mu((x, a_k]) - \gamma_k \leq \frac{1}{\alpha (a_k - x)} \right\} 
	\end{align*}    
	and for $a_{k-1} > -\infty$ we set $I_{k-1} = [a_{k-1}, a_k]$, $\gamma_{k-1} = \frac{1}{\alpha (a_k - x)} - \mu((a_{k-1}, a_k]) + \gamma_k$. We denote by $\K\subset\Z$ the set of ordinals $k$ of these points (with exception of $+\infty$ if such point is present). If $a_k \neq -\infty$ and $a_{k+1} \neq \infty$, we denote by $x_k$ the mid-point of the interval $I_k$. With these definitions, the intervals $I_k$ form a partition of $\mathbb R$. We emphasize that this whole construction implicitly depends on the choice of the measure $\mu$ and the constant $\alpha > 0$, even though we do not capture this in the notation of the intervals and the points.
	\begin{remark}
		We want to emphasize that the assumption of Brinck's condition \eqref{Brinck} on $\mu$ enforces that $a_k$ cannot converge to some finite point as $k \to \pm \infty$, i.e., the intervals $I_k$ indeed form a covering of $\mathbb R$.
	\end{remark}
	We have now constructed a covering $\{ I_k\}_{k \in \mathbb K}$ of $\mathbb R$ and a collection of measures $\mu_k$ such that for every Borel set $A \subset \mathbb R$: $\mu(A) = \sum_{k \in \mathbb K} \mu_k(A_k \cap I_k)$. With this data, we will perform our Neumann bracketing later.

	\begin{lemma}\label{lemma:decomp}
		The decomposition has the following properties:
		\begin{enumerate}
			\item \label{lemma:decomp_1}For $k \in \mathbb K$ with $\mu_k(I_k) > 0$: $\mu_k(I_k) |I_k| = \frac{1}{\alpha}$.
			\item \label{lemma:decomp_2} It is: $d_\alpha(x_k) = |I_k|$. 
		\end{enumerate}
	\end{lemma}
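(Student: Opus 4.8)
The statement splits into a trivial bookkeeping part and a genuinely geometric part, and I would handle them in that order.

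\emph{Part \eqref{lemma:decomp_1}.} If $\mu_k(I_k)>0$, then $I_k$ cannot be one of the two terminal unbounded pieces (on those $\mu_k$ is the zero measure by construction), so $I_k=[a_k,a_{k+1}]$ with both endpoints finite; I treat $k\geq 0$, the case $k<0$ being the mirror image. I would simply evaluate $\mu_k=\mu\cdot\mathbf 1_{[a_k,a_{k+1})}-\gamma_k\delta_{a_k}+\gamma_{k+1}\delta_{a_{k+1}}$ on $I_k$, obtaining $\mu_k(I_k)=\mu([a_k,a_{k+1}))-\gamma_k+\gamma_{k+1}$, and substitute the defining relation $\gamma_{k+1}=\frac1{\alpha(a_{k+1}-a_k)}-\mu([a_k,a_{k+1}))+\gamma_k$; the $\mu([a_k,a_{k+1}))$ and $\gamma_k$ terms cancel and leave $\mu_k(I_k)=\frac1{\alpha|I_k|}$, i.e. $\mu_k(I_k)|I_k|=\frac1\alpha$. (Incidentally this shows $\mu_k(I_k)>0$ precisely on the bounded pieces, so the hypothesis in \eqref{lemma:decomp_1} is only there to discard the trivial ends.)

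\emph{Part \eqref{lemma:decomp_2}.} Since $x_k$ is the midpoint of $I_k=[a_k,a_{k+1}]$, we have $\Delta_{x_k}(|I_k|)=I_k$, so the claim is $d_\alpha(x_k)=|I_k|$. I would use the characterization $d_\alpha(x)=\sup\{d>0:\mu(\Delta_x(d))<\frac1{\alpha d}\}=:\sup S_x$ (immediate from Definition \ref{def} since $d\mapsto d^{-2}$ is strictly decreasing, and compatible with \eqref{old_def}) and pin down $\sup S_{x_k}=|I_k|$ by two inclusions. For $d_\alpha(x_k)\leq|I_k|$: for $d>|I_k|$ we have $\Delta_{x_k}(d)\supseteq I_k$, and the general form of Lemma \ref{lem:gamma} (namely $\mu([a_k,a_{k+1}])-\gamma_k\geq\frac1{\alpha|I_k|}$, valid since $a_{k+1}<\infty$) together with $\gamma_k\geq0$ gives $\mu(\Delta_{x_k}(d))\geq\mu(I_k)\geq\frac1{\alpha|I_k|}>\frac1{\alpha d}$, so $d\notin S_{x_k}$. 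For $d_\alpha(x_k)\geq|I_k|$: fix $0<d<|I_k|$ and write $\Delta_{x_k}(d)=[\ell,r]$; since $x_k=\frac12(a_k+a_{k+1})$ and $d<|I_k|$, one has $a_k<\ell<r<a_{k+1}$, so $\Delta_{x_k}(d)\subseteq(a_k,r')$ for every $r'\in(r,a_{k+1})$. The set $\{x\in(a_k,\infty):\mu([a_k,x))-\gamma_k\leq\frac1{\alpha(x-a_k)}\}$ is, by monotonicity of $x\mapsto\mu([a_k,x))$ against $x\mapsto\frac1{\alpha(x-a_k)}$, an interval with left endpoint $a_k$ and supremum $a_{k+1}$, so $\mu([a_k,r'))-\gamma_k\leq\frac1{\alpha(r'-a_k)}$ for all such $r'$. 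Using $\gamma_k\leq\mu(\{a_k\})$ to absorb $\gamma_k$ into the genuine point mass at $a_k$, I get $\mu(\Delta_{x_k}(d))\leq\mu((a_k,r'))=\mu([a_k,r'))-\mu(\{a_k\})\leq\frac1{\alpha(r'-a_k)}$; letting $r'\downarrow r$ and using $r-a_k=\frac{|I_k|+d}2>d$ yields $\mu(\Delta_{x_k}(d))<\frac1{\alpha d}$, so $d\in S_{x_k}$. Combining, $d_\alpha(x_k)=|I_k|$.

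The one place that needs care — really the crux of part \eqref{lemma:decomp_2} — is the passage between $\mu([a_k,\cdot))$-estimates (in which the construction of $a_{k+1}$ is phrased, and which carry the correction $-\gamma_k$) and $\mu((a_k,\cdot))$-estimates (which is all that $\Delta_{x_k}(d)\subset(a_k,a_{k+1})$ provides, this set seeing no mass at $a_k$ at all). This requires knowing $\gamma_k\leq\mu(\{a_k\})$, i.e. that the redistributed mass never exceeds the actual point mass it came from; this is exactly the inequality that makes $\mu_k$ a bona fide non-negative measure, and it follows from the general form of Lemma \ref{lem:gamma} via $\gamma_k=\frac1{\alpha(a_k-a_{k-1})}-\mu([a_{k-1},a_k))+\gamma_{k-1}\leq\mu([a_{k-1},a_k])-\mu([a_{k-1},a_k))=\mu(\{a_k\})$ for $k\geq1$ (and $\gamma_0=0$); I would record it before the main argument. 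A secondary point is the bookkeeping of strict versus non-strict inequalities, so that the supremum lands on exactly $|I_k|$: the endpoint value $d=|I_k|$ itself may or may not belong to $S_{x_k}$, but this is immaterial.
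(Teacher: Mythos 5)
Your proof is correct, and for part \eqref{lemma:decomp_2} it takes a recognizably different route from the paper's. The paper argues twice by contradiction: assuming $|I_k|<d_\alpha(x_k)$ it plays the failure of the defining inequality beyond $a_{k+1}$ against the second half of Lemma \ref{max_interval}, and assuming $d_\alpha(x_k)<|I_k|$ it uses $\mu_k(I_k)\geq\mu(I_k^{\mathrm{int}})\geq\mu(\Delta_{x_k}(d_\alpha(x_k)))\geq\frac{1}{\alpha d_\alpha(x_k)}$ to contradict part \eqref{lemma:decomp_1}. You instead compute $d_\alpha(x_k)=\sup S_{x_k}$ directly from the construction: the generalized form of Lemma \ref{lem:gamma} (with the $-\gamma_k$ carried along, proved verbatim as in the paper) plus $\gamma_k\geq 0$ rules out every $d>|I_k|$, while the downward-closedness of the set defining $a_{k+1}$ together with $\gamma_k\leq\mu(\{a_k\})$ admits every $d<|I_k|$; Lemma \ref{max_interval} is never invoked. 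What your approach buys is that the genuinely delicate point is made explicit: the inequality $\gamma_k\leq\mu(\{a_k\})$, i.e.\ that the redistributed mass at $a_k$ never exceeds the true point mass there, is exactly what the paper uses silently in the step $\mu_k(I_k)\geq\mu(I_k^{\mathrm{int}})$ (since $\mu_k(I_k)-\mu(I_k^{\mathrm{int}})=\mu(\{a_k\})-\gamma_k+\gamma_{k+1}$), and your derivation of it from the generalized Lemma \ref{lem:gamma} is the clean way to record it. The price is that you must state and use the generalized Lemma \ref{lem:gamma} and the interval structure of the defining set yourself, whereas the paper's version is shorter because it can quote Lemma \ref{max_interval} and part \eqref{lemma:decomp_1}; both arguments defer the $k<0$ side to symmetry in the same way.
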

	\begin{proof}
		The first part is satisfied by construction. To show the second part, we assume that $|I_k| < d_\alpha(x_k)$, i.e., $a_{k+1} - a_k < d_\alpha(x_k)$. Then,
		\begin{equation*}
			x_k - \frac{d_\alpha(x_k) - \varepsilon}{2} < a_k < a_{k+1} < x_k + \frac{d_\alpha(x_k) - \varepsilon}{2}
		\end{equation*}
		for all sufficiently small  $\varepsilon > 0$. By construction of $a_{k+1}$, we have
		\begin{align*}
			\mu \left( \left [ a_k, x_k + \frac{d_\alpha(x_k) - \varepsilon}{2} \right) \right) > \frac{1}{\alpha \left( x_k + \frac{d_\alpha(x_k) - \varepsilon}{2} - a_k\right) } + \gamma_k \geq \frac{1}{\alpha \left( x_k + \frac{d_\alpha(x_k) - \varepsilon}{2} - a_k \right) }. 
		\end{align*}
		On the other hand, by Lemma \ref{max_interval},
		\begin{equation*}
			\mu\left( \left[a_k,x_k + \frac{d_\alpha(x_k) - \varepsilon}{2}\right) \right) \leq \mu\left( \Delta_{x_k}(d_\alpha(x_k) - \varepsilon) \right)
			< \frac{1}{\alpha (d_\alpha(x_k) - \varepsilon)}.
		\end{equation*}
		Therefore,
		\begin{equation*}
			\frac{1}{\alpha\left(x_k + \frac{d_\alpha(x_k) - \varepsilon}{2} - a_k\right)} < \frac{1}{\alpha (d_\alpha(x_k) - \varepsilon)}
		\end{equation*}
		for sufficiently small $\varepsilon > 0$. This is equivalent to
		\begin{align*}
			d_\alpha(x_k) - \varepsilon &< x_k + \frac{d_\alpha(x_k) - \varepsilon}{2} - a_k \\
			&= \frac{a_{k+1} + a_k}{2} + \frac{d_\alpha(x_k) - \varepsilon}{2} - a_k
			=\frac{a_{k+1} - a_k}{2} + \frac{d_\alpha(x_k) - \varepsilon}{2},
		\end{align*}
		and  this contradicts  our initial assumption that $d_\alpha(x_k) > a_{k+1} - a_k$.
		
		Now, assume that $d_\alpha(x_k) < |I_k|$. Then,
		\begin{equation*}
			\Delta_{x_k}(d_\alpha(x_k)) \subset I_k^{int},
		\end{equation*}
		where $I_k^{int}$ is the interior of $I_k$.  This and Lemma \ref{max_interval} imply
		\begin{equation*}
			\mu_k(I_k) \geq \mu(I_k^{int}) \geq \mu(\Delta_{x_k}(d_\alpha(x_k))) > \frac{1}{\alpha d_\alpha(x_k)} > \frac{1}{\alpha|I_k|}.
		\end{equation*}
		This contradicts part \eqref{lemma:decomp_1} of the present lemma.\qedhere
	\end{proof}

	\subsection{Neumann bracketing}\label{Subsect.Bracketing}
	We consider here   Sturm-Liouville operators with Neumann boundary conditions on each interval $I_k$, with the potentials being the measure $\mu_k$. To define these operators precisely, we need the following lemmas.
	
	\begin{lemma}{\cite[Lemma 2]{Brinck}}\label{lemma_Brinck}
		Let $I \subset \mathbb R$ be an interval of length $d$ and $f \in H^1(I)$. Then, for all $y \in I$,
		\[ \frac{1}{2d} \int_{I}|f(x)|^2  dx - \frac{d}{2}\int_{I}|f'(x)|^2  dx \leq |f(y)|^2 \leq \frac{2}{d} \int_{I}|f(x)|^2  dx + d \int_{I}|f'(x)|^2  dx.\]
	\end{lemma}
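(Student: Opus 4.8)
The plan is to derive both inequalities at once from a single elementary two-point inequality and then average over the interval.

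First I would isolate the purely scalar facts: for any complex numbers $a$ and $b$ one has $|a|^2 \le 2|b|^2 + 2|a - b|^2$, and, interchanging the roles of $a$ and $b$, $|a|^2 \ge \frac12 |b|^2 - |a - b|^2$. I apply these with $a = f(y)$ and $b = f(z)$, where $y \in I$ is the fixed point in the statement and $z \in I$ is a free variable. To control the error term $|f(y) - f(z)|^2$ I use that $f \in H^1(I)$ has an absolutely continuous representative (one-dimensional Sobolev embedding), so that $f(y) - f(z) = \int_z^y f'(t)\, dt$; Cauchy–Schwarz then gives
\[ |f(y) - f(z)|^2 \le |y - z| \int_I |f'(t)|^2\, dt . \]

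Next I would integrate the two pointwise inequalities in $z$ over $I$ and divide by $d = |I|$. The main term becomes $\frac1d \int_I |f(z)|^2\, dz$ with the correct prefactor in each case, while the error term is bounded by
\[ \frac1d \int_I |f(y) - f(z)|^2\, dz \le \Big( \frac1d \int_I |y - z|\, dz \Big) \int_I |f'(t)|^2\, dt . \]
Writing $I = [a, b]$, a direct computation gives $\int_I |y - z|\, dz = \frac12\big( (y - a)^2 + (b - y)^2 \big)$, which is at most $\frac{d^2}{2}$ because $(y - a) + (b - y) = d$; hence $\frac1d \int_I |y - z|\, dz \le \frac d2$. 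Substituting this into the averaged inequalities yields exactly $\frac{1}{2d}\int_I |f|^2 - \frac d2 \int_I |f'|^2 \le |f(y)|^2 \le \frac2d \int_I |f|^2 + d \int_I |f'|^2$.

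I do not expect a genuine obstacle here; the one place that needs care is that the weight $|y - z|$ must be kept inside the integral and estimated as above — replacing it prematurely by the cruder bound $d$ would produce $d$ rather than $\frac d2$ as the coefficient of the derivative term and so miss the stated constants. A minor preliminary point is to invoke the embedding $H^1(I) \hookrightarrow C(\overline I)$, so that the pointwise values $f(y)$ and $f(z)$ are meaningful and the fundamental theorem of calculus applies.
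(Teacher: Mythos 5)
Your proof is correct, and the constants come out exactly as stated: averaging the two elementary inequalities $|f(y)|^2 \le 2|f(z)|^2 + 2|f(y)-f(z)|^2$ and $|f(y)|^2 \ge \tfrac12|f(z)|^2 - |f(y)-f(z)|^2$ over $z \in I$, together with $|f(y)-f(z)|^2 \le |y-z|\int_I |f'|^2$ and the exact computation $\tfrac1d\int_I |y-z|\,dz = \tfrac{(y-a)^2+(b-y)^2}{2d} \le \tfrac d2$, yields precisely the claimed bounds, and the preliminary appeal to the absolutely continuous representative of $f \in H^1(I)$ is the right way to make the pointwise values and the fundamental theorem of calculus legitimate. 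There is nothing to compare against in the paper itself: the lemma is quoted from Brinck's work (Lemma 2 there) and no proof is reproduced, so your argument serves as a correct, self-contained substitute. Your closing remark is also well taken — estimating $|y-z|$ crudely by $d$ before integrating would degrade the coefficients of the derivative term to $2d$ and $d$ respectively, so keeping the weight inside the $z$-average is exactly the step that recovers the stated constants.
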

	
	\begin{lemma}
		Let $\alpha>0$ and $I_k$ be some interval as defined in Section \ref{sec:decomposition}. Then, the sesquilinear form
		\begin{equation}\label{sesq_form_k}
			\mathbf{a}_\mu^k[f]=\int_{I_k}|f'|^2 dx - \int_{I_k}|f|^2 d\mu_k, \qquad 
			\mathfrak{d}(\mathbf{a}_\mu^k)= H^1(I_k),
		\end{equation}
		is lower semi-bounded, closed, symmetric, and densely defined in $L^2(I_k)$. 
	\end{lemma}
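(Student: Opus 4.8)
The plan is to verify the three claimed properties of $\mathbf{a}_\mu^k$ one at a time, reducing everything to Lemma \ref{lemma_Brinck} and the structure of $\mu_k$ established in Lemma \ref{lemma:decomp}. The key point is that $\mu_k$ is a \emph{finite} measure on the bounded (or half-infinite, but then trivial) interval $I_k$: indeed, by Lemma \ref{lemma:decomp}\eqref{lemma:decomp_1} we have $\mu_k(I_k)|I_k| = \frac1\alpha$ whenever $\mu_k(I_k)>0$, while if $\mu_k(I_k)=0$ the form $\mathbf{a}_\mu^k$ is just the Neumann Laplacian on $I_k$ and all assertions are classical. So assume $d := |I_k| \in (0,\infty)$ and $\mu_k(I_k) = \frac{1}{\alpha d} < \infty$.

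First I would establish \emph{lower semi-boundedness}. For $f \in H^1(I_k)$, apply the upper bound in Lemma \ref{lemma_Brinck}: for every $y \in I_k$,
\[
|f(y)|^2 \leq \frac{2}{d}\int_{I_k}|f|^2\,dx + d\int_{I_k}|f'|^2\,dx.
\]
Integrating this pointwise bound against $d\mu_k$ and using $\mu_k(I_k) = \frac{1}{\alpha d}$ gives
\[
\int_{I_k}|f|^2\,d\mu_k \leq \mu_k(I_k)\Big(\frac{2}{d}\|f\|_{L^2(I_k)}^2 + d\|f'\|_{L^2(I_k)}^2\Big) = \frac{2}{\alpha d^2}\|f\|_{L^2(I_k)}^2 + \frac{1}{\alpha}\|f'\|_{L^2(I_k)}^2.
\]
Hence $\mathbf{a}_\mu^k[f] \geq (1-\tfrac1\alpha)\|f'\|_{L^2(I_k)}^2 - \tfrac{2}{\alpha d^2}\|f\|_{L^2(I_k)}^2 \geq -\tfrac{2}{\alpha d^2}\|f\|_{L^2(I_k)}^2$ when $\alpha \geq 1$; for general $\alpha$ one keeps the $\|f'\|^2$ term with a (possibly negative) coefficient and absorbs it — more robustly, one simply notes the bound $\int|f|^2 d\mu_k \le C_\varepsilon\|f\|^2 + \varepsilon\|f'\|^2$ for every $\varepsilon>0$ by the same computation with Lemma \ref{lemma_Brinck} applied on sub-intervals, which is exactly infinitesimal form-boundedness with relative bound $0$. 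That yields semi-boundedness for all $\alpha>0$.

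With form-boundedness of the perturbation $\int|f|^2\,d\mu_k$ relative to $\int|f'|^2\,dx$ with relative bound $<1$ in hand, \emph{closedness} of $\mathbf{a}_\mu^k$ on $H^1(I_k)$ follows from the KLMN-type stability of closedness under infinitesimally form-bounded perturbations, since the unperturbed form $f\mapsto \int_{I_k}|f'|^2\,dx$ with form domain $H^1(I_k)$ is closed (its associated operator being the Neumann Laplacian). \emph{Symmetry} is immediate because $\mu_k$ is a positive (signed, but real) measure and the form is manifestly real-symmetric: $\mathbf{a}_\mu^k(f,g) = \overline{\mathbf{a}_\mu^k(g,f)}$. \emph{Dense definedness} holds because $\mathfrak{d}(\mathbf{a}_\mu^k) = H^1(I_k)$ is dense in $L^2(I_k)$. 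I expect the only genuine subtlety to be bookkeeping the sign of the coefficient of $\|f'\|^2$ when $\alpha < 1$: there one cannot afford to lose the whole kinetic term at once, so the estimate must be localized (split $I_k$ into $\lceil 2/\alpha\rceil$ equal pieces, apply Lemma \ref{lemma_Brinck} on each, and sum), which restores an arbitrarily small relative bound and hence both semi-boundedness and closedness; this is the step I would be most careful to write out correctly.
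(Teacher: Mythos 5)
Your proposal is correct, and its skeleton (dispose of the trivial case $\mu_k(I_k)=0$, integrate the upper bound of Lemma \ref{lemma_Brinck} against $\mu_k$, invoke the identity $|I_k|\,\mu_k(I_k)=\tfrac1\alpha$ from Lemma \ref{lemma:decomp}, and conclude via KLMN) coincides with the paper's. The one place where you diverge is exactly the point you flagged as delicate: for small $\alpha$ the naive estimate gives $\int_{I_k}|f|^2\,d\mu_k \le \tfrac{2}{\alpha d^2}\|f\|^2 + \tfrac1\alpha\|f'\|^2$, whose relative bound $\tfrac1\alpha$ is not $<1$ (note this already fails at $\alpha=1$, not only for $\alpha<1$, since KLMN needs a strict inequality). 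Your remedy is to subdivide $I_k$ into $\lceil 2/\alpha\rceil$ pieces and reapply Lemma \ref{lemma_Brinck} on each, which yields an arbitrarily small relative bound; this works, and it even gives the stronger conclusion of infinitesimal form-boundedness. The paper avoids subdivision entirely by a small algebraic trick: it writes $|I_k|\,\mu_k(I_k)=\tfrac1\alpha=\tfrac{1}{\alpha+1}\bigl(1+\tfrac1\alpha\bigr)$ and applies KLMN with the \emph{rescaled} reference form $(1+\alpha^{-1})\|f'\|_{L^2(I_k)}^2$ (still closed, nonnegative, with domain $H^1(I_k)$), so the relative bound becomes $\tfrac{1}{1+\alpha}<1$ uniformly in $\alpha>0$. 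Both routes are valid; the paper's is shorter, yours is more robust and makes the infinitesimal bound explicit.
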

	\begin{proof}
		If $\mu_k(I_k) = 0$, nothing needs to be proven. Otherwise, it is clear that $\mathbf{a}_\mu^k$ is symmetric and densely defined. Due to Lemma \ref{lemma:decomp}, $|I_k| \mu_k(I_k) = \frac{1}{\alpha}$. Hence, by Lemma \ref{lemma_Brinck}, for $f\in H^1(I_k)$ we obtain:
		\begin{align*}
			\left|\int_{I_k} |f|^2d\mu_k\right| &\leq \frac{2\mu_k(I_k)}{|I_k|}  \|f\|_{L^2(I_k)}^2 + |I_k|\mu_k(I_k)  \|f'\|_{L^2(I_k)}^2\\
			&\leq \frac{2\mu_k(I_k)}{|I_k|} \| f\|^2_{L^2(I_k)} + \frac{1}{\alpha + 1} \cdot (1 + \frac{1}{\alpha}) \| f'\|_{L^2(I_k)}^2.
		\end{align*}
		Since $(1+\alpha^{-1})\| f'\|_{L^2(I_k)}^2$ is a non-negative, closed, symmetric and densely defined quadratic form on the domain $H^1(I_k)$ and $\frac{1}{1+\alpha} < 1$, the KLMN-Theorem implies that $\mathbf{a}_\mu^k$ is closed and semi-bounded; see, e.g., \cite[Theorem X.17]{Reed2}.
	\end{proof}
	
	Our upper estimates for eigenvalues  will be, as this is usually done, by means of the Neumann bracketing. The standard application of the variational principle (say, the Glazman lemma) is the following.
	\begin{lemma}\label{ineq_for_N}
		Let $\mu$ be a Radon measure satisfying \eqref{Brinck} and $\Hb_\mu$ be the operator generated by the sesquilinear form \eqref{form}. Let $\alpha>0$ and $\{I_k\}_{k\in \mathbb{K}}$ be the intervals introduced above. Consider the operators $\mathbf{H}_\mu^k$ associated with the sesquilinear forms \eqref{sesq_form_k}. Then, for every $\lambda >0$, the following inequality holds true:
		\begin{equation*}
			N(-\lambda,\Hb_\mu)\leq \sum_{k \in \mathbb K} N(-\lambda,\Hb_\mu^k).
		\end{equation*}
	\end{lemma}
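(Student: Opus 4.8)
The statement is an instance of Neumann bracketing, and the plan is to combine the variational (Glazman) description of the counting function with the additivity of the decomposition $\mu(A)=\sum_{k\in\K}\mu_k(A\cap I_k)$ coming from the construction of Section~\ref{sec:decomposition}. The only feature that is not completely routine is that the point masses of $\mu$ located at the division points $a_k$ have been split between the two adjacent measures $\mu_{k-1}$ and $\mu_k$, so one must check that this splitting does not spoil the bookkeeping at the endpoints.

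First I would record two elementary decomposition identities for an arbitrary $f\in\mathfrak d(\mathbf a_\mu)$. Put $f_k:=f|_{I_k}$. Since $f\in H^1(\R)$, each $f_k\in H^1(I_k)=\mathfrak d(\mathbf a_\mu^k)$, and as the intervals $I_k$ cover $\R$ with pairwise intersections of Lebesgue measure zero, $\int_\R|f'|^2\,dx=\sum_{k\in\K}\int_{I_k}|f'|^2\,dx$. For the potential term, $f$ is continuous (being in $H^1(\R)$ in one dimension), so the Dirac parts $\pm\gamma_k\delta_{a_k}$ entering the $\mu_k$ are integrated against $|f|^2$ unambiguously; since each $\mu_k$ is non-negative (here one uses Lemma~\ref{lem:gamma} and its symmetric analogue, which give $\gamma_k\le\mu(\{a_k\})$) and $\mu=\sum_k\mu_k(\cdot\cap I_k)$, Tonelli's theorem yields $\int_\R|f|^2\,d\mu=\sum_{k\in\K}\int_{I_k}|f|^2\,d\mu_k<\infty$; at a shared endpoint $a_k$ the contribution $\gamma_k|f(a_k)|^2$ coming from $\mu_{k-1}$ and the contribution $(\mu(\{a_k\})-\gamma_k)|f(a_k)|^2$ coming from $\mu_k$ add up to $\mu(\{a_k\})|f(a_k)|^2$, as they should. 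Consequently $\mathbf a_\mu[f]=\sum_{k\in\K}\mathbf a_\mu^k[f_k]$, the series being absolutely convergent.

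It then remains to run the standard comparison. Identify $L^2(\R)$ with $\bigoplus_{k\in\K}L^2(I_k)$, let $P_k$ be the spectral projection of the self-adjoint operator $\Hb_\mu^k$ (well defined by the preceding lemma) onto $(-\infty,-\lambda)$, so that $\operatorname{rank}P_k=N(-\lambda,\Hb_\mu^k)$ and $\mathbf a_\mu^k[g]\ge-\lambda\|g\|_{L^2(I_k)}^2$ whenever $g\in\mathfrak d(\mathbf a_\mu^k)$ and $P_kg=0$. By the Glazman lemma pick a subspace $L\subset\mathfrak d(\mathbf a_\mu)$ with $\dim L=N(-\lambda,\Hb_\mu)$ (or, if this number is infinite, with $\dim L$ arbitrarily large) on which $\mathbf a_\mu[f]<-\lambda\|f\|_{L^2(\R)}^2$ for all $f\ne0$. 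Define $T\colon L\to\bigoplus_{k\in\K}\operatorname{ran}P_k$ by $Tf=(P_kf_k)_{k\in\K}$. If $Tf=0$, then each $f_k$ is orthogonal to $\operatorname{ran}P_k$, hence $\mathbf a_\mu^k[f_k]\ge-\lambda\|f_k\|_{L^2(I_k)}^2$; summing over $k$ and invoking the two identities above gives $\mathbf a_\mu[f]\ge-\lambda\|f\|_{L^2(\R)}^2$, so $f=0$. Thus $T$ is injective and $N(-\lambda,\Hb_\mu)=\dim L\le\sum_{k\in\K}\operatorname{rank}P_k=\sum_{k\in\K}N(-\lambda,\Hb_\mu^k)$.

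I expect the only genuine subtlety to be the endpoint bookkeeping of the second paragraph — keeping track of the split point masses and of the continuity of test functions there; everything else is the routine monotonicity of the eigenvalue counting function under enlargement of the form domain. Note that this argument does not require a uniform lower bound for the forms $\mathbf a_\mu^k$, since $\bigoplus_k\Hb_\mu^k$ is self-adjoint irrespective of semiboundedness; such a bound is nonetheless available — by Lemmas~\ref{Brinc_it_terms_q*} and~\ref{lemma:decomp}, $|I_k|=d_\alpha(x_k)$ is bounded below under Brinck's condition — if one prefers to phrase the comparison directly through the direct-sum quadratic form.
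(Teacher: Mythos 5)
Your proof is correct, and it is exactly the argument the paper has in mind: the lemma is stated without proof as ``the standard application of the variational principle (say, the Glazman lemma)'', and your write-up supplies precisely that standard Neumann-bracketing comparison, with the additivity $\mathbf a_\mu[f]=\sum_{k\in\K}\mathbf a_\mu^k[f|_{I_k}]$ and the splitting of point masses at the $a_k$ (using $0\le\gamma_k\le\mu(\{a_k\})$ from Lemma~\ref{lem:gamma} and its analogue) handled correctly.
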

	
	In our first estimate, we use the intervals $I_k$ corresponding to the special value $\alpha=2$.
	\begin{lemma}\label{N_leq_sum_N_k}
		Under the conditions above, for   $\alpha = 2$, the following estimate holds for $\lambda>0$:
		\begin{equation*}
			N(-\lambda, \Hb_\mu) \leq  \sum_{k \in \mathbb K: ~-\lambda \geq -\frac{1}{|I_k|^2}} 1 = \#\{ k\in \mathbb{K}: \lambda\leq 1 /|I_k|^2\}.
		\end{equation*}
	\end{lemma}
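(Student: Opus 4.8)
The plan is to feed the Neumann bracketing inequality of Lemma~\ref{ineq_for_N}, taken with $\alpha=2$, into a sharp spectral bound for each local operator $\Hb_\mu^k$ on the interval $I_k$. Concretely, I would prove that every $\Hb_\mu^k$ has \emph{at most one} eigenvalue below zero, and that its lowest eigenvalue is never below $-1/|I_k|^2$. Granting these two facts, $N(-\lambda,\Hb_\mu^k)\le 1$ for every $\lambda>0$, while $N(-\lambda,\Hb_\mu^k)=0$ as soon as $\lambda>1/|I_k|^2$; hence $N(-\lambda,\Hb_\mu^k)\le \mathbf 1\{\lambda\le 1/|I_k|^2\}$, and summing over $k\in\mathbb K$ and invoking Lemma~\ref{ineq_for_N} gives precisely the asserted inequality. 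For the exceptional intervals with $\mu_k(I_k)=0$ (in particular any unbounded $I_k$, by the construction) the form $\mathbf a_\mu^k$ is nonnegative, so $N(-\lambda,\Hb_\mu^k)=0$ and there is nothing to check.

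So fix $k$ with $\mu_k(I_k)>0$; then $I_k$ is bounded and, by part~\eqref{lemma:decomp_1} of Lemma~\ref{lemma:decomp} with $\alpha=2$, $\mu_k(I_k)=\tfrac{1}{2|I_k|}$. The mechanism is Brinck's pointwise estimate (Lemma~\ref{lemma_Brinck}) applied on $I_k$ with $d=|I_k|$: for $f\in H^1(I_k)$ one bounds $\int_{I_k}|f|^2\,d\mu_k\le \mu_k(I_k)\sup_{y\in I_k}|f(y)|^2$, and the right-hand estimate of Lemma~\ref{lemma_Brinck} together with $\mu_k(I_k)|I_k|=\tfrac12$ turns this into
\[
\int_{I_k}|f|^2\,d\mu_k\;\le\;\frac{1}{|I_k|^2}\,\|f\|_{L^2(I_k)}^2+\frac12\,\|f'\|_{L^2(I_k)}^2 .
\]
Substituting into $\mathbf a_\mu^k[f]=\|f'\|_{L^2(I_k)}^2-\int_{I_k}|f|^2\,d\mu_k$ yields $\mathbf a_\mu^k[f]\ge \tfrac12\|f'\|_{L^2(I_k)}^2-\tfrac1{|I_k|^2}\|f\|_{L^2(I_k)}^2\ge -\tfrac1{|I_k|^2}\|f\|_{L^2(I_k)}^2$, which is the lower bound $\Hb_\mu^k\ge -1/|I_k|^2$, hence $N(-\lambda,\Hb_\mu^k)=0$ for $\lambda>1/|I_k|^2$.

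For the ``at most one negative eigenvalue'' part I would restrict $\mathbf a_\mu^k$ to the codimension-one subspace $W=\{f\in H^1(I_k):\int_{I_k}f\,dx=0\}$. On $W$ the Poincar\'e inequality $\|f\|_{L^2(I_k)}^2\le \frac{|I_k|^2}{\pi^2}\|f'\|_{L^2(I_k)}^2$ holds (with the first nonzero Neumann eigenvalue as constant), so the displayed bound upgrades to
\[
\mathbf a_\mu^k[f]\;\ge\;\frac12\|f'\|_{L^2(I_k)}^2-\frac{1}{\pi^2}\|f'\|_{L^2(I_k)}^2\;=\;\Bigl(\frac12-\frac1{\pi^2}\Bigr)\|f'\|_{L^2(I_k)}^2\;\ge\;0 ,
\]
since $\pi^2>2$. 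A closed, lower-semibounded form that is nonnegative on a subspace of codimension one has at most one eigenvalue below zero, by the min--max principle, so $N(-\lambda,\Hb_\mu^k)\le 1$ for all $\lambda>0$. The only genuinely delicate point is this numerical balance: getting the exponent $1/|I_k|^2$ rather than something weaker needs the Poincar\'e constant $\pi^2/|I_k|^2$ to dominate the $1/|I_k|^2$ left over after Brinck's lemma consumes half of $\|f'\|_{L^2(I_k)}^2$, i.e. the gap $\pi^2>2$ — which is exactly why the value $\alpha=2$ is the one used here. Everything else (the $\mu_k(I_k)=0$ and $|I_k|=\infty$ cases, passing from the pointwise to the integrated bound, and the final summation via Lemma~\ref{ineq_for_N}) is routine bookkeeping.
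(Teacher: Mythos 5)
Your proposal is correct and follows essentially the same route as the paper: Neumann bracketing via Lemma \ref{ineq_for_N}, then Brinck's Lemma \ref{lemma_Brinck} with $d=|I_k|$ and the normalization $\mu_k(I_k)|I_k|=\tfrac12$ to dominate $\mathbf a_\mu^k$ from below by the form $\tfrac12\|f'\|^2-|I_k|^{-2}\|f\|^2$. The only cosmetic difference is that the paper reads off the eigenvalues of the comparison operator (half the Neumann Laplacian shifted by $-1/|I_k|^2$) directly, whereas you recover the same two facts --- lower bound $-1/|I_k|^2$ and at most one negative eigenvalue --- via the Poincar\'e inequality on the mean-zero subspace and the min--max principle, which is the same spectral information in different packaging.
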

	\begin{proof}
		Let $\lambda>0$. Let $\Hb_\mu^k$ be the operators from Lemma \ref{ineq_for_N} and $\mathbf{a}_\mu^k$ be the corresponding quadratic forms. Then, 
		\begin{equation}\label{N_leq_sum_N_k_eq}
			N(-\lambda, \Hb_\mu) \leq  \sum_{k\in \mathbb{K}} N(-\lambda, \Hb_\mu^k).
		\end{equation}
		By construction, if some $I_k$ is unbounded, then it follows that $\mu_k(I_k) = 0$, and hence $N(-\lambda,H_\mu^k) = 0$. Therefore, from now on, we will only consider bounded intervals $I_k$. By Lemma \ref{lemma_Brinck},
		\begin{equation*}
			\mathbf{a}_\mu^k[f] \geq \left( 1 - |I_k|\mu_k(I_k) \right) \|f'\|_{L^2(I_k)}^2 - \frac{2}{|I_k|} \mu_k(I_k) \|f\|_{L^2(I_k)}^2.
		\end{equation*}
		From Lemma \ref{lemma:decomp} we know that $2\mu(I_k) = 1/|I_k|$. Hence, the estimate above gives 
		\begin{equation}\label{estimate_for_a_k}
			\mathbf{a}_\mu^k[f] \geq \mathbf{b}^k[f], 
		\end{equation}
		where $\mathbf{b}^k$ is the sesquilinear form defined as follows:
		\begin{equation*}
			\mathbf{b}^k[f] = \frac{1}{2} \|f'\|_{L^2(I_k)}^2 - \frac{1}{|I_k|^2} \|f\|_{L^2(I_k)}^2, \qquad \mathfrak{d}(\mathbf b^k) = H^1(I_k), \, f\in H^1(I_k).
		\end{equation*}
		Therefore, 
		\begin{equation}\label{HgeqL}
			\Hb_\mu^k\geq \Bb^k,
		\end{equation}
		in the sense of quadratic forms, where $\Bb^k$ is the operator associated with $\mathbf{b}^k$. Note that $\Bb^k$ is, up to a constant factor, $(-\frac{d^2}{dx^2})_{\Nc} - c I$, with $(-\frac{d^2}{dx^2})_\Nc$ being the Neumann Laplacian of the interval $I_k$. Hence, the spectrum of $\Bb^k$ is well-known: it is purely discrete and consists of the eigenvalues
		\begin{equation*}
			\eta_m =  \frac{1}{2}\left(\frac{\pi m}{|I_k|}\right)^2 - \frac{1}{|I_k|^2},\quad m\in \mathbb{N} \cup \{0\}.
		\end{equation*}
		For the lowest eigenvalue of $\Bb^k$,
		we have $\eta_0 = -1/|I_k|^2$, but the next eigenvalue is already larger than $-\lambda$:
		\begin{equation*}
			\eta_1 = \left(\frac{\pi^2}{2} - 1\right)\frac{1}{|I_k|^2} > 0.
		\end{equation*}
		Therefore,
		\begin{equation*}
			N(-\lambda,\Bb^k) = 
			\begin{cases}
				1 & \text{if }  -1/|I_k|^2<-\lambda,\\
				0 & \text{otherwise},
			\end{cases}
		\end{equation*}
		and hence, \eqref{HgeqL} implies that $N(-\lambda,\Hb_\mu^k) \leq 1$. 
		Moreover, from \eqref{estimate_for_a_k}, we know that 
		\begin{equation*}
			N(-\lambda,\Hb_\mu^k) = 0, \quad \text{for } -\lambda< -\frac{1}{|I_k|^2}.
		\end{equation*}
		This all means that the intervals $I_k$ with $-1/|I_k|^2<-\lambda$ contribute with at most one eigenvalue to the sum \eqref{N_leq_sum_N_k_eq}, while the remaining intervals do not make any contribution to this sum at all.
	\end{proof}

	\subsection{Two-sided estimate for the spectral distribution function.}
	Now, we are ready to obtain the main results of this section, which are expressed in terms of the measure of sublevel sets for $q_\alpha^*$:
	\begin{equation*}
		M_\alpha(\lambda) : = \{x\in \mathbb{R}: \; q_\alpha^*(x) \geq \lambda\}.
	\end{equation*}
	
	\begin{theorem}\label{est_for_N_supnorm}
		Let $\mu$ be a Radon measure satisfying \eqref{Brinck} and $\Hb_\mu$ be the operator generated by sesquilinear form \eqref{form}. Let
		\begin{equation*}
			\alpha = \left(\pi^2 + \frac{3}{4}\right)^{-1}, \qquad \beta =2.
		\end{equation*}
		Then, for $\lambda>0$ the following estimates hold true:
		\begin{align}
			\frac{1}{2} \min_{y\in M_\alpha(4\lambda)} \sqrt{q^*_{\alpha}(y)}  \mathcal{L} (M_\alpha(4\lambda)) &\leq N(-\lambda, \Hb_\mu) \leq \max_{y\in M_\beta(\lambda/4)} \sqrt{q^*_{\beta}(y)} \cdot \mathcal{L} (M_\beta(\lambda/4)),\\
			\sqrt{\lambda} \mathcal{L}(M_\alpha(4\lambda)) &\leq N(-\lambda, \Hb_\mu) 
			\leq \sqrt{\|q_\beta^*\|_{\infty}} \mathcal{L} (M_\beta(\lambda/4))\label{thm:estimate_eq2},\\
			\sup_{\sqrt{\eta}\geq \lambda} \left(\eta \cdot \mathcal{L}(M_\alpha(4\eta))\right)&\leq N(-\lambda, \Hb_\mu) 
			\leq 4 \int_{M_\beta(\lambda/4)} \sqrt{q_\beta^*(y)}dy.
		\end{align}
		Here, $\mathcal{L}$ is the Lebesgue measure.
	\end{theorem}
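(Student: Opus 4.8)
The plan is to take the first pair of inequalities as the principal statement and deduce the other two from it by elementary monotonicity. On $M_\alpha(4\lambda)$ one has $q^*_\alpha\ge 4\lambda$, so $\min_{y\in M_\alpha(4\lambda)}\sqrt{q^*_\alpha(y)}\ge 2\sqrt\lambda$, which turns the first lower bound into the second; since $\lambda\mapsto N(-\lambda,\Hb_\mu)$ is non-increasing and the first lower bound applied with any $\eta\ge\lambda$ reads $N(-\eta,\Hb_\mu)\ge\sqrt\eta\,\mathcal{L}(M_\alpha(4\eta))$, taking a supremum over such $\eta$ gives the third. On the upper side, $\max_{y\in M_\beta(\lambda/4)}\sqrt{q^*_\beta(y)}\le\sqrt{\|q^*_\beta\|_\infty}$ gives the second upper bound from the first, and the integral upper bound comes out of the same bracketing with one extra step.

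For the upper bounds I would run the Neumann bracketing of Subsection~\ref{Subsect.Bracketing} with $\beta=2$. Lemma~\ref{N_leq_sum_N_k} gives
\[
 N(-\lambda,\Hb_\mu)\le\#\{k\in\mathbb{K}:\ \lambda\le 1/|I_k|^2\},
\]
and by Lemma~\ref{lemma:decomp} we have $|I_k|=d_\beta(x_k)$, hence $1/|I_k|^2=q^*_\beta(x_k)$; so only the indices with $x_k\in M_\beta(\lambda)$ contribute. For such $k$, Lemma~\ref{equiv_q*} applied on $I_k=\Delta_{x_k}(d_\beta(x_k))$ shows $q^*_\beta(y)\ge\tfrac14 q^*_\beta(x_k)\ge\lambda/4$ for every $y\in I_k$, so $I_k\subset M_\beta(\lambda/4)$; since the $\{I_k\}$ partition $\mathbb{R}$, the contributing intervals are pairwise essentially disjoint and lie inside $M_\beta(\lambda/4)$, whence $\sum_{k:\,x_k\in M_\beta(\lambda)}|I_k|\le\mathcal{L}(M_\beta(\lambda/4))$. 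Writing $1=|I_k|\sqrt{q^*_\beta(x_k)}$ and summing: bounding $\sqrt{q^*_\beta(x_k)}$ above by $\max_{M_\beta(\lambda/4)}\sqrt{q^*_\beta}$ gives the first upper bound, while using $\sqrt{q^*_\beta(x_k)}\le 2\sqrt{q^*_\beta(y)}$ for $y\in I_k$ (Lemma~\ref{equiv_q*} again) gives $1\le 2\int_{I_k}\sqrt{q^*_\beta(y)}\,dy$, and summation together with $\bigcup_k I_k\subset M_\beta(\lambda/4)$ yields $N(-\lambda,\Hb_\mu)\le 2\int_{M_\beta(\lambda/4)}\sqrt{q^*_\beta}\le 4\int_{M_\beta(\lambda/4)}\sqrt{q^*_\beta}$, the third upper bound.

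For the lower bounds the tool is the variational principle (Glazman's lemma) via disjoint trial functions: if $f_1,\dots,f_n\in\mathfrak{d}(\mathbf{a}_\mu)$ have pairwise disjoint supports and $\mathbf{a}_\mu[f_j]<-\lambda\|f_j\|^2$, then $N(-\lambda,\Hb_\mu)\ge n$. The building block: for $x\in M_\alpha(4\lambda)$ put $d:=d_\alpha(x)\le\tfrac1{2\sqrt\lambda}$; by Lemma~\ref{max_interval}, $\mu(\Delta_x(d))\ge\tfrac1{\alpha d}$. On $J=\Delta_x(2d)$ take $f(t)=\cos\!\big(\tfrac{\pi(t-x)}{2d}\big)$, so $f\in H^1_0(J)$ (its zero extension thus lies in $\mathfrak{d}(\mathbf{a}_\mu)$), $\|f\|^2=d$, $\|f'\|^2=\tfrac{\pi^2}{4d}$, and — the crucial point — $f^2\ge\tfrac12$ on all of $\Delta_x(d)$, so that the $\mu$-mass guaranteed inside $\Delta_x(d)$, which Lemma~\ref{max_interval} does not prevent from sitting arbitrarily close to the endpoints of $\Delta_x(d)$, is captured in the bulk of $J$; hence $\int_J f^2\,d\mu\ge\tfrac1{2\alpha d}$ and $\mathbf{a}_\mu[f]\le\tfrac1d\big(\tfrac{\pi^2}{4}-\tfrac1{2\alpha}\big)$, which is $<-\lambda d=-\lambda\|f\|^2$ as soon as $\tfrac1{2\alpha}>\tfrac{\pi^2}{4}+\lambda d^2$; since $\lambda d^2=\lambda/q^*_\alpha(x)\le\tfrac14$, the stated value $\alpha=(\pi^2+\tfrac34)^{-1}$ does this (optimizing the length of $J$ is what pins down the precise constant). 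It remains to select disjoint such intervals $J_j$, centered at points $x_j\in M_\alpha(4\lambda)$, packed so densely along $M_\alpha(4\lambda)$ — using the controlled variation of $q^*_\alpha$, Lemma~\ref{equiv_q*} — that $\sum_j d_\alpha(x_j)\ge\tfrac12\mathcal{L}(M_\alpha(4\lambda))$; then $n=\#\{j\}=\sum_j d_\alpha(x_j)\sqrt{q^*_\alpha(x_j)}\ge\big(\min_{M_\alpha(4\lambda)}\sqrt{q^*_\alpha}\big)\sum_j d_\alpha(x_j)$ gives the first lower bound.

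The step I expect to be the main obstacle is precisely this last packing/counting: reconciling the unavoidable \emph{enlargement} of the support intervals of the trial functions (forced because the mass promised by $q^*_\alpha\ge 4\lambda$ may concentrate at the boundary of $\Delta_x(d_\alpha(x))$, where any $H^1_0$ test function is small) with the need to keep the selected disjoint family dense enough to recover the clean factor $\tfrac12$ and the \emph{full} Lebesgue measure $\mathcal{L}(M_\alpha(4\lambda))$, rather than a fixed fraction of it that a crude Vitali-type covering would give. This is exactly where the value of $\alpha$ and the factor $4$ in $M_\alpha(4\lambda)$ must be tuned in coordination with the selection procedure, and carrying it out without loss of constant is the technical heart of the argument. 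A variant that trades part of this difficulty is to run the decomposition of Subsection~\ref{sec:decomposition} with parameter $\alpha$ and build trial functions on unions $I_{k-1}\cup I_k\cup I_{k+1}$, using $\mu(I_k)\ge\tfrac1{\alpha|I_k|}$ and that $I_k$ lies in the interior of that union — but then one must control the ratios $|I_{k\pm1}|/|I_k|$, which again reduces to a careful choice of admissible indices.
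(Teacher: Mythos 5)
Your upper-bound half is essentially the paper's own argument: Neumann bracketing via Lemma \ref{N_leq_sum_N_k} with $\beta=2$, the identification $|I_k|=d_\beta(x_k)$ from Lemma \ref{lemma:decomp}, and Lemma \ref{equiv_q*} to place the contributing intervals inside $M_\beta(\lambda/4)$; your factor $2$ in the integral bound is even slightly better than the stated $4$. The problem is the lower bound, and it is exactly the step you yourself flag: the disjoint-selection/packing argument that should turn ``one good trial function per point of $M_\alpha(4\lambda)$'' into $\sqrt{\lambda}\,\mathcal L(M_\alpha(4\lambda))$ many disjointly supported trial functions is not carried out. Your building block (the cosine bump on $\Delta_x(2d_\alpha(x))$ with $\lambda d_\alpha(x)^2\le\frac14$ and $\alpha=(\pi^2+\frac34)^{-1}$) is sound, but because the lengths $2d_\alpha(x_j)$ vary from point to point, any Vitali-type or greedy selection of pairwise disjoint intervals loses a fixed fraction of $\mathcal L(M_\alpha(4\lambda))$ (a one-dimensional greedy selection, forced to leave safety gaps of length $\frac1{2\sqrt\lambda}$ to guarantee disjointness, yields at best a constant like $\frac23$ in place of $1$). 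Since the explicit constants of Theorem \ref{est_for_N_supnorm} are part of the statement and are used downstream (e.g.\ the $\alpha/32$ in Theorem \ref{LT_estimate}), a proof ``up to an unspecified constant'' does not establish the theorem, and you have not indicated how to close this loss.

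The paper sidesteps the packing problem entirely by decoupling the trial intervals from $d_\alpha$: it uses the \emph{fixed-length} grid $J_k=\bigl(\frac{k-1}{\sqrt\lambda},\frac{k}{\sqrt\lambda}\bigr)$ with test functions $f_k(x)=1-\cos\bigl(2\pi\sqrt\lambda(x-\frac{k}{\sqrt\lambda})\bigr)$, which automatically have disjoint supports. The key observation is that if the concentric quarter-window $\tilde{\tilde J}_k$ (length $\frac1{4\sqrt\lambda}$) contains a point $\xi$ with $q_\alpha^*(\xi)>4\lambda$, then $d_\alpha(\xi)<\frac{1}{2\sqrt\lambda}$, so the definition of $q^*_\alpha$ forces $\mu(\tilde J_k)\ge\mu\bigl(\Delta_\xi\bigl(\tfrac1{2\sqrt\lambda}\bigr)\bigr)>\bigl(2\pi^2+\tfrac32\bigr)\sqrt\lambda$, whence $\mathbf a_\mu[f_k]<-\lambda\|f_k\|^2$; thus each such window contributes one eigenvalue. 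Finally, the count is converted into the full measure of $M_\alpha(4\lambda)$ by repeating the argument for the four grids shifted by $j/(4\sqrt\lambda)$, $j=0,1,2,3$: the shifted quarter-windows tile $\mathbb R$, so summing the four inequalities gives $4N(-\lambda,\Hb_\mu)\ge 4\sqrt\lambda\,\mathcal L(M_\alpha(4\lambda))$ with no covering-lemma loss. Note also that your ``principal'' first lower bound is not genuinely stronger than the second: by continuity of $q_\alpha^*$, whenever $M_\alpha(4\lambda)$ is a nonempty proper subset of $\mathbb R$ it contains a point with $q_\alpha^*=4\lambda$, so $\frac12\min_{M_\alpha(4\lambda)}\sqrt{q_\alpha^*}=\sqrt\lambda$; aiming directly at that form bought you nothing, while the grid-shift device is what actually produces the stated constants.
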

	
	\begin{remark}\label{finiteness.rem} Theorem \ref{est_for_N_supnorm} does not make \emph{a-priori}  assumptions about the nature of the negative spectrum of $\Hb_{\mu}$. The inequalities should be understood in the following way. If, for a given $\lambda>0$, one of the quantities on the right-hand side is finite, then the spectrum below $-\lambda$ is finite, and the inequalities hold. On the other hand, if some of the quantities on the left-hand side is infinite, the spectrum of $\Hb_\mu$ in $(-\infty, -\lambda]$ is infinite and thus contains at least one point of the essential spectrum.   
	\end{remark}

	\begin{proof}
		\textbf{The upper bound.} We apply Lemma \ref{N_leq_sum_N_k}, which  gives
		\begin{equation}\label{N_less_summ_1}
			N(-\lambda, \Hb_\mu) \leq  \sum_{k \in \mathbb K:~ -\lambda \geq -\frac{1}{|I_k|^2}} 1,
		\end{equation}
		where the $I_k$ are the intervals defined as Section \ref{sec:decomposition} with $\alpha = 2$. For those $k\in \mathbb{K}$ for which
		\begin{equation}\label{spec_k}
			-\lambda \geq -\frac{1}{|I_k|^2},
		\end{equation}
		we know from Lemma \ref{lemma:decomp} that $I_k = \Delta_{x_k}(d_2(x_k))$. 
		Therefore, for any $y \in I_k$, Lemma \ref{equiv_q*} and \eqref{spec_k} imply that
		\begin{equation*}
			q_2^*(y)\geq \frac{1}{4}q_2^*(x_k) = \frac{1}{4|I_k|^2}\geq \frac{\lambda}{4}.
		\end{equation*}
		Hence, we obtain
		\begin{equation*}
			1 = |I_k| \cdot \frac{1}{|I_k|} \leq \frac{1}{|I_k|} \cdot \mathcal{L} \left( \left\{x\in I_k: q_2^*(x)\geq \frac{\lambda}{4} \right\} \right) .
		\end{equation*}
		Then,
		\begin{align*}
			N(-\lambda, \Hb_\mu) &\leq \sum_{k \in \mathbb K:~-\lambda \geq -\frac{1}{|I_k|^2}} \frac{1}{|I_k|} \cdot \mathcal{L} \left( \left\{x\in I_k: q_2^*(x)\geq \frac{\lambda}{4} \right\} \right) \\
			&\leq \max_{y\in \left\{x\in I_k: q_2^*(x)\geq \frac{\lambda}{4} \right\}} \sqrt{q_2^*(y)} \cdot \mathcal{L} \left( \left\{x\in \mathbb{R}: q_2^*(x)\geq \frac{\lambda}{4} \right\} \right) .
		\end{align*}
		This gives the first upper bound. The second upper bound is an obvious consequence of the first one. The third upper bound is obtained from:
		\begin{equation*}
			N(-\lambda, \Hb_\mu) \leq   \sum_{-\lambda \geq -\frac{1}{|I_k|^2}} \int_{I_k}\frac{1}{|I_k|} \leq 4\sum_{-\lambda \geq -\frac{1}{|I_k|^2}} \int_{I_k}\sqrt{q^*_{\alpha}} \leq  4 \int_{\left\{x\in\mathbb{R}: q_2^*(x)\geq \frac{\lambda}{4}\right\}} \sqrt{q_2^*(y)}dy.
		\end{equation*}

		\textbf{The lower bound.} For a fixed   $\lambda>0$, we define intervals
		\begin{align*}
			&J_k : = \left( \frac{k-1}{\sqrt{\lambda}}, \frac{k}{\sqrt{\lambda}}\right)\\
			& \tilde{J}_k := \left( \frac{k-1}{\sqrt{\lambda}}+ \frac{1}{4\sqrt{\lambda}}, \frac{k}{\sqrt{\lambda}} - \frac{1}{4\sqrt{\lambda}}\right)\\
			&\tilde{\tilde{J}}_k := \left( \frac{k-1}{\sqrt{\lambda}} + \frac{3}{8\sqrt{\lambda}}, \frac{k}{\sqrt{\lambda}}  - \frac{3}{8\sqrt{\lambda}}\right)
		\end{align*}
		for $k\in \mathbb{Z}$. Note that $J_k$, $\tilde{J}_k$, and $\tilde{\tilde{J}}_k$ have the same centerpoint and their lengths, respectively, are $\frac{1}{\sqrt{\lambda}}$, $\quad \frac{1}{2\sqrt{\lambda}}$, and $\frac{1}{4\sqrt{\lambda}}$. 
		We define the orthogonal system of test functions
		\begin{eqnarray*}
			f_k(x) := \begin{cases}
				1-\cos\left( 2\pi\sqrt{\lambda}\left(x-\frac{k}{\sqrt{\lambda}}\right)\right), ~&x \in J_k,\\
				0, & x \not \in J_k,
			\end{cases}
		\end{eqnarray*}
		for $k\in \mathbb{Z}$.
		Since $f_k\geq 1$ in $\tilde{J}_k$, we can  estimate the value of the quadratic form on $f_k$:
		\begin{equation*}
			\mathbf{a}_\mu[f_k] = 2\pi^2\sqrt{\lambda} - \int_{J_k} f_k^2~d\mu \leq 2\pi^2\sqrt{\lambda} - \mu(\tilde{J}_k).
		\end{equation*}
		We set $\alpha = \left(\pi^2 + \frac{3}{4}\right)^{-1}$ and suppose that there exists $\xi \in \tilde{\tilde{J}}_k$ such that $q_{\alpha}^\ast(\xi) > 4\lambda$. 
		Then, by the definition of the function $q^*_\alpha$, 
		\begin{equation*}
			\mu (\tilde{J}_k) \geq \mu\left(\left[\xi - \frac{1}{4\sqrt{\lambda}},\xi + \frac{1}{4\sqrt{\lambda}} \right]\right)> \frac{\pi^2 + \frac{3}{4}}{\frac{1}{2\sqrt{\lambda}}} = \left(2\pi^2 + \frac{3}{2}\right)\sqrt{\lambda}.
		\end{equation*}
		Therefore, $\mathbf{a}_\mu[f_k] <  -\frac{3}{2}\sqrt{\lambda}$. 
		Since $\|f_k\|^2 = \frac{3}{2\sqrt{\lambda}}$, the last estimate gives $\mathbf{a}_\mu^k[f_k] < - \lambda \|f_k\|_{L^2(\mathbb{R})}^2$.
		Now, the variational principle implies
		\begin{equation*}
			N(-\lambda,\Hb_\mu) \geq \sum_{ k \in \mathbb Z:~\left\{x\in \tilde{\tilde{J}}_k: q^*_{\alpha}(x) \geq 4\lambda\right\} \neq \emptyset} 1  = 4\sqrt{\lambda} \sum_{k \in \mathbb Z:~ \left\{x\in \tilde{\tilde{J}}_k: q^*_{\alpha}(x) \geq 4\lambda\right\} \neq \emptyset} \frac{1}{4\sqrt{\lambda}}.
		\end{equation*}
		For any interval $I \subset \mathbb R$ we denote by $I+d$ the interval $I\subset \mathbb{R}$  shifted by $d$, that is,
		\begin{equation*}
			I+d := \{x+d: x\in I\}.
		\end{equation*}
		By the arguments above, we know that
		\begin{equation}\label{J_k}
			N(-\lambda,\Hb_\mu) \geq \sum_{k \in \mathbb Z:~ \left\{x\in \tilde{\tilde{J}}_k+\frac{j}{4\sqrt{\lambda}}: q^*_{\alpha}(x) \geq 4\lambda\right\} \neq \emptyset} 1  = 4\sqrt{\lambda} \sum_{k \in \mathbb Z:~ \left\{x\in \tilde{\tilde{J}}_k+\frac{j}{4\sqrt{\lambda}}: q^*_{\alpha}(x) \geq 4\lambda\right\} \neq \emptyset} \frac{1}{4\sqrt{\lambda}}
		\end{equation}
		for $j=0,1,2,3$. The intervals $\tilde{\tilde{J}}_k+\frac{j}{4\sqrt{\lambda}}$ cover $\mathbb{R}^1$ with multiplicity 4, therefore, summing \eqref{J_k} over $j$,
		we obtain
		\begin{equation*}
			4N(-\lambda,H_\mu) \geq 4\sqrt{\lambda} \cdot \mathcal{L}\left( \left\{x\in \mathbb{R}: q^*_{\alpha}(x) \geq 4\lambda\right\} \right),
		\end{equation*}
		and this finishes the proof for the lower bound.\qedhere
	\end{proof}
	
	\subsection{Estimates for eigenvalues}
	It is convenient to reformulate the preceding results as estimates for the individual negative eigenvalues of $\Hb_{\mu}$.
	
	\begin{theorem}\label{est_lambda_n}
		Let $\mu$ be a Radon measure which satisfies  \eqref{Brinck} and $\mathbf{H}_\mu$ be the operator generated by sesquilinear form \eqref{form}. Assume that the negative part of the spectrum of $\mathbf{H}_\mu$ is discrete. Denote by $(\lambda_n)_{n \in \mathbb N}$ the non-decreasing sequence of negative eigenvalues (counting multiplicities) with the convention that $\lambda_{m+1} = \lambda_{m+2} = \dots = \Lambda=\inf\sigma_{ess}(\Hb_\mu)$ if there are only $m$  eigenvalues below the lowest point $\Lambda$ of the essential spectrum. Then,  for each $n \in \mathbb N$,
		\begin{align*}
			\sup &\{ - \lambda < 0: ~\mathcal L(\{ x: ~q_\beta^\ast(x) \geq \lambda/4\}) \leq \frac{n-1}{\sqrt{\| q_\beta^\ast\|_\infty}}\} \\
			\leq & \quad \lambda_n \\
			\leq &\quad \inf \{ -\lambda < 0: ~\mathcal L(\{ x: ~q_\alpha^\ast(x) \geq 4\lambda\}) \geq \frac{n}{\sqrt{\lambda}} \}
		\end{align*}
	\end{theorem}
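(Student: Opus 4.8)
The plan is to obtain Theorem~\ref{est_lambda_n} from the two-sided counting function bound \eqref{thm:estimate_eq2} of Theorem~\ref{est_for_N_supnorm} (with $\alpha,\beta$ as there) via the usual duality between $N(-\lambda,\mathbf H_\mu)$ and the min-max eigenvalues $\lambda_n$. First I would fix the bookkeeping. Since the negative spectrum of $\mathbf H_\mu$ is assumed discrete, $\Lambda=\inf\sigma_{ess}(\mathbf H_\mu)\geq 0$, so every point of $\sigma(\mathbf H_\mu)\cap(-\infty,0)$ is an isolated eigenvalue of finite multiplicity; writing $N(-\lambda,\mathbf H_\mu)$ for the number of eigenvalues of $\mathbf H_\mu$, counted with multiplicity, strictly below $-\lambda$, the stated convention on $(\lambda_n)$ gives, for all $n\in\mathbb N$ and all $\lambda>0$, the equivalences
\begin{equation*}
\lambda_n<-\lambda\ \Longleftrightarrow\ N(-\lambda,\mathbf H_\mu)\geq n,\qquad \lambda_n\geq -\lambda\ \Longleftrightarrow\ N(-\lambda,\mathbf H_\mu)\leq n-1 .
\end{equation*}
We may assume $\mu\neq 0$, the statement being trivial otherwise; then, by the properties of the Otelbaev function established above together with \eqref{Brinck} and Lemma~\ref{Brinc_it_terms_q*}, $0<\|q_\beta^\ast\|_\infty<\infty$.

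For the upper bound on $\lambda_n$ I would take any $\lambda>0$ with $\mathcal L(\{x:q_\alpha^\ast(x)\geq 4\lambda\})\geq n/\sqrt\lambda$. The left inequality in \eqref{thm:estimate_eq2} gives $N(-\lambda,\mathbf H_\mu)\geq \sqrt\lambda\,\mathcal L(M_\alpha(4\lambda))\geq n$, hence $\lambda_n<-\lambda$. Since every $-\lambda$ in the set defining the infimum therefore exceeds $\lambda_n$, passing to the infimum yields
\begin{equation*}
\lambda_n\leq \inf\{-\lambda<0:\ \mathcal L(\{x:q_\alpha^\ast(x)\geq 4\lambda\})\geq n/\sqrt\lambda\},
\end{equation*}
with the convention $\inf\emptyset=+\infty$, in which case this bound is vacuous.

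For the lower bound I would argue symmetrically. Take any $\lambda>0$ with $\mathcal L(\{x:q_\beta^\ast(x)\geq \lambda/4\})\leq (n-1)/\sqrt{\|q_\beta^\ast\|_\infty}$. The right inequality in \eqref{thm:estimate_eq2} gives $N(-\lambda,\mathbf H_\mu)\leq \sqrt{\|q_\beta^\ast\|_\infty}\,\mathcal L(M_\beta(\lambda/4))\leq n-1$, hence $\lambda_n\geq -\lambda$. Thus every element of the set defining the supremum is $\leq\lambda_n$, and passing to the supremum gives the claimed lower bound on $\lambda_n$.

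I do not expect a real obstacle. The only points requiring a little care are: matching the strict/non-strict convention in the definition of $N(-\lambda,\mathbf H_\mu)$ with the strict/non-strict inequalities in the two equivalences above; the behaviour of the bounds at the boundary values of $\lambda$ at which the sublevel sets $M_\alpha(4\lambda)$, $M_\beta(\lambda/4)$ change — harmless, since we only take a supremum or infimum; and the observation that discreteness of the negative spectrum forces $\sigma_{ess}(\mathbf H_\mu)\subset[0,\infty)$, which is what makes the identification of $N(-\lambda,\mathbf H_\mu)$ with a count of genuine eigenvalues, and hence the equivalences, legitimate. The finiteness discussion of Remark~\ref{finiteness.rem} is not needed here, as discreteness is assumed throughout.
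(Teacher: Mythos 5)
Your proposal is correct and follows essentially the same route as the paper: both deduce the two-sided bounds on $\lambda_n$ directly from the inequalities \eqref{thm:estimate_eq2} of Theorem \ref{est_for_N_supnorm}, translating bounds on $N(-\lambda,\Hb_\mu)$ into bounds on $\lambda_n$ via the standard counting-function/eigenvalue duality and then passing to the supremum, respectively the infimum. The only difference is cosmetic: you spell out the strict/non-strict conventions and the degenerate cases ($\mu=0$, empty set in the infimum), which the paper leaves implicit.
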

	\begin{proof}
		Suppose that $-\lambda$ satisfies $\mathcal L(\{ x: ~q_\beta^\ast(x) \geq \lambda/4\}) \leq \frac{n-1}{\sqrt{\| q_\beta^\ast\|_\infty}}$. Then, by Theorem \ref{est_for_N_supnorm}, 
		\begin{align*}
			N(-\lambda, \mathbf{H}_\mu) \leq n-1,
		\end{align*}
		this means $\lambda_n \geq -\lambda$. On the other hand, if $-\lambda$ satisfies $\mathcal L(\{ x: ~q_\alpha^\ast(x) \geq 4\lambda\}) \geq \frac{n}{\sqrt{\lambda}}$, then Theorem \ref{est_for_N_supnorm} yields $N(-\lambda, \mathbf{H}_\mu) \geq n$, 
		hence $\lambda_n \leq -\lambda$. 
	\end{proof}
	
	As corollary, we obtain a \emph{lower} estimate for the number $N_-(\mathbf{H}_{\mu})$ of negative eigenvalues of $\mathbf{H}_{\mu}$. 
	\begin{corollary}
		Under the above general conditions for the measure $\mu$, 
		\begin{equation}\label{lower estimate}
			N_-(\mathbf{H}_{\mu})  \geq \sup_{\varepsilon > 0} \frac{\varepsilon^{3/2}}{2} \int_{\{ q_\alpha^\ast \geq \varepsilon\}} \frac{1}{q_\alpha^\ast} dx.
		\end{equation}
	\end{corollary}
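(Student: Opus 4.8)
The plan is to read off the corollary from the lower bound $\sqrt{\lambda}\,\mathcal{L}(M_\alpha(4\lambda))\le N(-\lambda,\mathbf{H}_\mu)$ of Theorem~\ref{est_for_N_supnorm} (inequality~\eqref{thm:estimate_eq2}), after trading the factor $\mathcal{L}(M_\alpha(\varepsilon))$ for the weighted integral $\varepsilon\int_{\{q_\alpha^*\ge\varepsilon\}}(q_\alpha^*)^{-1}\,dx$ via a layer-cake inequality. The point is that, since $q_\alpha^*\ge\varepsilon$ on its superlevel set, this weighted integral is in fact \emph{dominated} by $\mathcal{L}(M_\alpha(\varepsilon))/\varepsilon$, so the estimate in the corollary is formally weaker than \eqref{thm:estimate_eq2}; there is therefore no real obstacle, only a short computation.

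First I would establish the layer-cake bound. Using the pointwise identity $\tfrac{1}{q_\alpha^*(x)}=\int_0^\infty \mathbf 1_{\{q_\alpha^*(x)<s\}}\,s^{-2}\,ds$ together with Tonelli's theorem,
\[
\int_{M_\alpha(\varepsilon)}\frac{dx}{q_\alpha^*(x)}=\int_0^\infty \frac{1}{s^2}\,\mathcal{L}\bigl(\{x:\ \varepsilon\le q_\alpha^*(x)<s\}\bigr)\,ds .
\]
The inner Lebesgue measure vanishes for $s\le\varepsilon$ and is at most $\mathcal{L}(M_\alpha(\varepsilon))$ for $s>\varepsilon$, hence
\[
\int_{M_\alpha(\varepsilon)}\frac{dx}{q_\alpha^*(x)}\ \le\ \mathcal{L}(M_\alpha(\varepsilon))\int_\varepsilon^\infty\frac{ds}{s^2}\ =\ \frac{\mathcal{L}(M_\alpha(\varepsilon))}{\varepsilon}.
\]

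Next I would fix $\varepsilon>0$ and apply Theorem~\ref{est_for_N_supnorm} with $\lambda=\varepsilon/4$; since $N(-\lambda,\mathbf{H}_\mu)\le N_-(\mathbf{H}_\mu)$ for every $\lambda>0$ (indeed $N(-\lambda,\mathbf{H}_\mu)\nearrow N_-(\mathbf{H}_\mu)$ as $\lambda\downarrow 0$), this gives
\[
\frac{\sqrt{\varepsilon}}{2}\,\mathcal{L}(M_\alpha(\varepsilon))=\sqrt{\varepsilon/4}\;\mathcal{L}\bigl(M_\alpha(4\cdot\varepsilon/4)\bigr)\ \le\ N_-(\mathbf{H}_\mu).
\]
Combining the two displays,
\[
\frac{\varepsilon^{3/2}}{2}\int_{\{q_\alpha^*\ge\varepsilon\}}\frac{dx}{q_\alpha^*}\ \le\ \frac{\varepsilon^{3/2}}{2}\cdot\frac{\mathcal{L}(M_\alpha(\varepsilon))}{\varepsilon}\ =\ \frac{\sqrt{\varepsilon}}{2}\,\mathcal{L}(M_\alpha(\varepsilon))\ \le\ N_-(\mathbf{H}_\mu),
\]
and taking the supremum over $\varepsilon>0$ yields \eqref{lower estimate}.

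Finally, concerning finiteness: under the standing discreteness hypothesis \eqref{MazCond} one has $q_\alpha^*(x)\to 0$ at infinity (Lemma~\ref{q*_and_int}), so every $M_\alpha(\varepsilon)$ is bounded and all integrals above are finite; if instead $N_-(\mathbf{H}_\mu)=\infty$ the asserted inequality holds trivially in the sense of Remark~\ref{finiteness.rem}. Thus the only ingredient beyond Theorem~\ref{est_for_N_supnorm} is the elementary Fubini estimate of the second step, and I do not anticipate any genuine difficulty.
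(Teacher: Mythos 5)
Your proposal is correct and takes essentially the same route as the paper: both rest on the lower bound \eqref{thm:estimate_eq2} of Theorem \ref{est_for_N_supnorm} combined with the elementary estimate $\mathcal{L}(M_\alpha(\varepsilon)) \geq \varepsilon \int_{\{q_\alpha^*\geq \varepsilon\}} (q_\alpha^*)^{-1}\,dx$ (your Tonelli computation is just a roundabout way of saying $1/q_\alpha^* \leq 1/\varepsilon$ on the superlevel set). The only difference is that the paper detours through the individual eigenvalue bounds of Theorem \ref{est_lambda_n} (via the convention $\lambda_{n+1}=0$), whereas you apply the counting-function bound with $\lambda=\varepsilon/4$ directly, which is marginally cleaner and gives the stated inequality without the slack of the paper's ``$n+1>\dots$'' step.
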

	
	\begin{proof}
		We use the conventions from Theorem \ref{est_lambda_n}. 
		Let  $N(0, \mathbf{H}_\mu) = n$. Then, $\lambda_{n+1} = 0$. Hence, for every $\lambda > 0$,
		\begin{equation}
			\frac{n+1}{\sqrt{\lambda}} > \mathcal L(\{ x: ~q_\alpha^\ast(x) \geq 4\lambda\})\ge
			4\lambda \int_{\{ q_\alpha^\ast \geq 4\lambda\}} \frac{1}{q_\alpha^\ast(x)}~dx.
		\end{equation}
		This yields for every $\varepsilon > 0$:
		\begin{align*}
			n+1 > \frac{\varepsilon^{3/2}}{2} \int_{\{ q_\alpha^\ast \geq \varepsilon\}} \frac{1}{q_\alpha^\ast(x)} ~dx,
		\end{align*}
		just what we need.		
	\end{proof}
	In particular, from \eqref{lower estimate}, we immediately obtain the following \emph{necessary} condition for the finiteness of the negative spectrum:
	\begin{corollary}
		Under the above general conditions for the measure $\mu$, if 
		\begin{align*} 
			\sup_{\varepsilon > 0} \varepsilon^{3/2} \int_{\{ q_\alpha^\ast \geq \varepsilon\}} \frac{1}{q_\alpha^\ast}~dx = \infty,
		\end{align*}
		then $N_{-}(\Hb_\mu) = \infty$. 	
	\end{corollary}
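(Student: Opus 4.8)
The plan is to read this statement off directly from the lower bound \eqref{lower estimate} proved in the preceding corollary; it is essentially its contrapositive. First I would observe that the two suprema
\[
\sup_{\varepsilon > 0} \varepsilon^{3/2} \int_{\{ q_\alpha^\ast \geq \varepsilon\}} \frac{1}{q_\alpha^\ast}\, dx
\qquad\text{and}\qquad
\sup_{\varepsilon > 0} \frac{\varepsilon^{3/2}}{2} \int_{\{ q_\alpha^\ast \geq \varepsilon\}} \frac{1}{q_\alpha^\ast}\, dx
\]
differ only by the harmless constant factor $\tfrac12$, so if the first one equals $+\infty$ then so does the second. Since by \eqref{lower estimate} the second quantity is a lower bound for $N_-(\mathbf{H}_\mu)$, we would then conclude $N_-(\mathbf{H}_\mu) = +\infty$. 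Under the standing assumption \eqref{MazCond} the negative spectrum of $\mathbf{H}_\mu$ is discrete, so this means precisely that $\mathbf{H}_\mu$ has infinitely many negative eigenvalues, which then necessarily accumulate at $0$.

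The only point I would pause to check is that \eqref{lower estimate} is meaningful and valid with no a priori restriction on the cardinality of the negative spectrum. In the proof of that corollary the case $N(0,\mathbf{H}_\mu)=n<\infty$ is handled explicitly; if instead $N(0,\mathbf{H}_\mu)=\infty$, the asserted inequality holds trivially, since its left-hand side is already $+\infty$. Hence no genuine obstacle arises: the entire content of the statement is the formal remark that the hypothesis displayed here is exactly the negation of the finiteness criterion encoded in \eqref{lower estimate}, and the proof is a one-line deduction from it.
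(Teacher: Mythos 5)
Your proposal is correct and is exactly how the paper obtains this statement: the corollary is an immediate consequence of the lower bound \eqref{lower estimate}, with the factor $\tfrac12$ being irrelevant, and your remark that the inequality is trivially valid when $N(0,\Hb_\mu)=\infty$ is a sensible (if unstated in the paper) sanity check.
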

	\begin{remark}
		In the results above, we essentially made use of the lower bounds obtained in Theorem \ref{est_for_N_supnorm}. Since $q_\alpha(x)$ decays at infinity not faster than $|x|^{-2}$, the upper bound in this theorem tends to infinity as $\lambda\to 0$, therefore, it is useless for estimating the number of all negative eigenvalues from above.  
	\end{remark}
	We conclude this section with an estimate for the lowest point of the spectrum. 
	\begin{corollary}\label{lowest}
		Let $\mu$ be a Radon measure, as usual, and $\Hb_\mu$ be the operator generated by sesquilinear form \eqref{form}. Let $\lambda_1 = \inf \sigma(\Hb_\mu)$. Then, the following estimates hold true: 
		\begin{equation*}
			-4\|q_2^*\|_{\infty} \leq \lambda_1 \leq -\frac{1}{4} \|q_{(\pi^2 + \frac{3}{4})^{-1}}^*\|_{\infty}.
		\end{equation*}
	\end{corollary}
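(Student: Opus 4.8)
The plan is to read off both inequalities directly from the two‑sided bound \eqref{thm:estimate_eq2} of Theorem \ref{est_for_N_supnorm}, using only the elementary facts that $\lambda \mapsto N(-\lambda,\Hb_\mu)$ is non‑increasing in $\lambda$ and that $\inf\sigma(\Hb_\mu)$ equals the supremum of those $-\lambda<0$ with $N(-\lambda,\Hb_\mu)=0$, equivalently the infimum of those $-\lambda<0$ with $N(-\lambda,\Hb_\mu)\ge 1$. In view of Remark \ref{finiteness.rem}, one interprets $N(-\lambda,\Hb_\mu)=0$ as "$\sigma(\Hb_\mu)\cap(-\infty,-\lambda)=\emptyset$" and $N(-\lambda,\Hb_\mu)\ge 1$ as "$\sigma(\Hb_\mu)\cap(-\infty,-\lambda)\ne\emptyset$", so no discreteness of the negative spectrum is needed.

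For the lower bound $\lambda_1\ge-4\|q_2^*\|_\infty$ I would take an arbitrary $\lambda>4\|q_2^*\|_\infty$ and apply the right‑hand inequality in \eqref{thm:estimate_eq2} with $\beta=2$. Then $\lambda/4>\|q_2^*\|_\infty$, so the superlevel set $M_2(\lambda/4)=\{x:q_2^*(x)\ge\lambda/4\}$ is empty, hence $\mathcal L(M_2(\lambda/4))=0$ and $N(-\lambda,\Hb_\mu)=0$. Since this holds for every $\lambda>4\|q_2^*\|_\infty$, the spectrum of $\Hb_\mu$ does not meet $(-\infty,-4\|q_2^*\|_\infty)$, giving $\lambda_1\ge-4\|q_2^*\|_\infty$; note that this quantity is finite because $q_2^*$ is bounded under Brinck's condition by Lemma \ref{Brinc_it_terms_q*}.

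For the upper bound $\lambda_1\le-\tfrac14\|q_\alpha^*\|_\infty$ with $\alpha=(\pi^2+\tfrac34)^{-1}$, the case $\mu=0$ is trivial (both sides vanish and $\sigma(\Hb_0)=[0,\infty)$), so assume $\mu\ne 0$, hence $\|q_\alpha^*\|_\infty>0$. For any $\lambda$ with $0<4\lambda<\|q_\alpha^*\|_\infty$ I would choose a point $x_0$ with $q_\alpha^*(x_0)>4\lambda$; by the joint continuity of the Otelbaev function established above, $q_\alpha^*>4\lambda$ on a neighbourhood of $x_0$, so $M_\alpha(4\lambda)$ has positive Lebesgue measure. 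The left‑hand inequality in \eqref{thm:estimate_eq2} then yields $N(-\lambda,\Hb_\mu)\ge\sqrt\lambda\,\mathcal L(M_\alpha(4\lambda))>0$, and since $N(-\lambda,\Hb_\mu)$ is an integer it is $\ge 1$. Thus $\sigma(\Hb_\mu)$ meets $(-\infty,-\lambda)$, i.e. $\lambda_1\le-\lambda$; letting $\lambda\uparrow\tfrac14\|q_\alpha^*\|_\infty$ completes the proof.

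The only step that is not pure bookkeeping is guaranteeing $\mathcal L(M_\alpha(4\lambda))>0$ for $4\lambda$ strictly below the supremum level $\|q_\alpha^*\|_\infty$; this is precisely where the continuity of $q_\alpha^*$ is used, and it is the point I would expect to require the most care (together with the correct reading of the counting function via Remark \ref{finiteness.rem}). Everything else reduces to the monotonicity of $N(\cdot,\Hb_\mu)$ and the definition of $\inf\sigma(\Hb_\mu)$.
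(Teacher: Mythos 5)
Your proposal is correct and follows essentially the same route as the paper: both bounds are read off from the two-sided estimate \eqref{thm:estimate_eq2} of Theorem \ref{est_for_N_supnorm}, with emptiness of $M_2(\lambda/4)$ for $\lambda/4>\|q_2^*\|_\infty$ giving $N(-\lambda,\Hb_\mu)=0$, and continuity of $q_\alpha^*$ guaranteeing $\mathcal L(M_\alpha(4\lambda))>0$ for $4\lambda<\|q_\alpha^*\|_\infty$, hence $N(-\lambda,\Hb_\mu)\ge 1$. Your choice of a point with strict inequality $q_\alpha^*(x_0)>4\lambda$ and the explicit treatment of $\mu=0$ are minor refinements of the paper's argument, not a different method.
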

	\begin{proof}
		Recall that $\lambda_1 = \sup \{ \lambda \in \mathbb R: ~N(\lambda, H_{\mu}) = 0\}$. Assume that $-\lambda < -4\| q_2^\ast\|_\infty$, i.e., $\lambda/4 > \| q_2^\ast\|_\infty$. Then, by Theorem \ref{est_for_N_supnorm},
		\begin{align*}
			N(-\lambda, \Hb_\mu) \leq \sqrt{\| q_2^\ast\|_\infty} \mathcal L( \{ x: ~q_2^\ast(x) \geq \lambda/4\}). 
		\end{align*}
		By our assumption on $\lambda$, $\{ x: ~q_2^\ast(x) \geq \lambda/4\} = \emptyset$, and therefore $N(-\lambda, \Hb_\mu) \leq 0$, as required. 
		
		On the other hand, if $-\lambda > -\| q_{(\pi^2 + 3/4)^{-1}}^\ast\|_\infty/4$, i.e., $4\lambda < \| q_{(\pi^2 + 3/4)^{-1}}^\ast\|_\infty$, then
		\begin{align*}
			N(-\lambda, \Hb_{\mu}) \geq \sqrt{\lambda} \mathcal L(\{ x: ~q_{(\pi^2 + 3/4)^{-1}}^\ast(x) \geq 4\lambda\}).
		\end{align*}
		Since $4\lambda < \| q_{(\pi^2 + 3/4)^{-1}}^\ast\|_\infty$, the set $\{ x: ~q_{(\pi^2 + 3/4)^{-1}}^\ast(x) \geq 4\lambda\}$ is non-empty. Since $q_{(\pi^2 + 3/4)^{-1}}^\ast$ is continuous, the set needs to have a positive Lebesgue measure. Hence, $N(-\lambda, \Hb_{\mu})>0$ and, being integer it is not less than $1$. This means  that $\lambda_1 \leq -\lambda$. 
	\end{proof}
	
	Note that in Corollary \ref{lowest}, it is not supposed that the negative spectrum of $\Hb_\mu$ is discrete or that the lowest point of the spectrum is an eigenvalue. An estimate for the lowest point of the essential spectrum can be obtained as well.
	
	\begin{corollary}	Under the above conditions,  let $\Lambda = \inf \sigma_{ess}(\Hb_\mu)$. We set
		\begin{equation*}
			Q_1 = \limsup_{x \to \pm \infty} q_2^\ast(x),
			\qquad
			Q_2 = \liminf_{x \to \pm \infty} q_\alpha^\ast(x),
		\end{equation*}
		where $\alpha = (\pi^2 + 3/4)^{-1}$. Then,
		\begin{equation*}
			-4Q_1 \leq \Lambda \leq -\frac{1}{4} Q_2.
		\end{equation*}
	\end{corollary}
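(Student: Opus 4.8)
The plan is to obtain both inequalities from Theorem~\ref{est_for_N_supnorm} together with Remark~\ref{finiteness.rem}, the argument running parallel to the proof of Corollary~\ref{lowest}. The two facts I use repeatedly are: if $N(-\lambda,\Hb_\mu)=\infty$ for some $\lambda>0$, then $(-\infty,-\lambda]$ contains a point of $\sigma_{ess}(\Hb_\mu)$, hence $\Lambda\le-\lambda$; and if $N(-\lambda,\Hb_\mu)<\infty$, then the spectrum of $\Hb_\mu$ below $-\lambda$ is finite, so $\sigma_{ess}(\Hb_\mu)\cap(-\infty,-\lambda)=\emptyset$ and $\Lambda\ge-\lambda$. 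The role previously played, in Corollary~\ref{lowest}, by $\|q^*\|_\infty$ and by the non-emptiness of $M(\cdot)$ is now played by the behaviour of $q^*$ near infinity and by the finiteness or infiniteness of $\mathcal L(M(\cdot))$. No decoupling-at-infinity argument is needed beyond Remark~\ref{finiteness.rem}.

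\emph{Upper bound $\Lambda\le-\tfrac14 Q_2$.} Fix $\lambda>0$ with $4\lambda<Q_2$. Since $Q_2=\liminf_{|x|\to\infty}q_\alpha^*(x)>4\lambda$, there is $R>0$ with $q_\alpha^*(x)>4\lambda$ for all $|x|>R$; hence $M_\alpha(4\lambda)\supset\{x:\ |x|>R\}$ and $\mathcal L(M_\alpha(4\lambda))=\infty$. By the lower bound in \eqref{thm:estimate_eq2}, the left-hand side $\sqrt{\lambda}\,\mathcal L(M_\alpha(4\lambda))$ is infinite, so by Remark~\ref{finiteness.rem} the spectrum of $\Hb_\mu$ in $(-\infty,-\lambda]$ is infinite and contains a point of $\sigma_{ess}(\Hb_\mu)$; therefore $\Lambda\le-\lambda$. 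Letting $\lambda\uparrow Q_2/4$ gives $\Lambda\le-\tfrac14 Q_2$ when $Q_2>0$. If $Q_2=0$, the claim reduces to $\Lambda\le0$: when moreover $\limsup_{|x|\to\infty}q_\alpha^*=0$ this follows from Lemma~\ref{q*_and_int}, since then the negative spectrum of $\Hb_\mu$ is discrete and $\sigma_{ess}(\Hb_\mu)=[0,\infty)$; otherwise one picks $x_k$ with $|x_k|\to\infty$ and $d_\alpha(x_k)\to\infty$ along a subsequence, puts on each $\Delta_{x_k}(d_\alpha(x_k))$ a normalized half-cosine bump, and checks, using Lemma~\ref{max_interval} to bound $\mu(\Delta_{x_k}(d_\alpha(x_k)))$, that $\mathbf{a}_\mu$ tends to $0$ along this orthonormal sequence, whence $\inf\sigma_{ess}(\Hb_\mu)\le0$.

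\emph{Lower bound $-4Q_1\le\Lambda$.} Fix $\lambda>0$ with $\lambda>4Q_1$, i.e.\ $\lambda/4>Q_1=\limsup_{|x|\to\infty}q_2^*(x)$. Then $q_2^*(x)<\lambda/4$ for all $|x|>R$, so $M_\beta(\lambda/4)=\{x:\ q_2^*(x)\ge\lambda/4\}\subset[-R,R]$ has finite Lebesgue measure; moreover $\|q_2^*\|_\infty<\infty$ by Lemma~\ref{Brinc_it_terms_q*}, because $\mu$ satisfies \eqref{Brinck}. Hence the upper bound in \eqref{thm:estimate_eq2} gives $N(-\lambda,\Hb_\mu)\le\sqrt{\|q_2^*\|_\infty}\,\mathcal L(M_\beta(\lambda/4))<\infty$, and Remark~\ref{finiteness.rem} shows that the spectrum of $\Hb_\mu$ below $-\lambda$ is finite; in particular $\sigma_{ess}(\Hb_\mu)\cap(-\infty,-\lambda)=\emptyset$, i.e.\ $\Lambda\ge-\lambda$. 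Letting $\lambda\downarrow4Q_1$ yields $\Lambda\ge-4Q_1$ (note $Q_1\le\|q_2^*\|_\infty<\infty$, so this bound is meaningful).

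The only genuinely delicate point is the matching of $\liminf$ with the upper estimate on $\Lambda$ and of $\limsup$ with the lower one: the former needs $q_\alpha^*$ to stay above a threshold on an \emph{entire} neighbourhood of infinity (to force infinite Lebesgue measure of the superlevel set), the latter needs $q_2^*$ to \emph{eventually} drop below a threshold (to confine the superlevel set to a bounded set). Once this is arranged, both inequalities are immediate from the already-established counting-function bounds, and the degenerate case $Q_2=0$ treated above is the only place where a short additional argument is required.
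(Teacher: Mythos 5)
Your argument is correct and is essentially the paper's own proof: both rest on the characterization $\Lambda=\sup\{\lambda:\,N(\lambda,\Hb_\mu)<\infty\}$ together with the two-sided bounds \eqref{thm:estimate_eq2} of Theorem \ref{est_for_N_supnorm} — a superlevel set of infinite Lebesgue measure forces $N(-\lambda,\Hb_\mu)=\infty$ and hence $\Lambda\le-\lambda$, while a bounded superlevel set together with $\|q_2^*\|_\infty<\infty$ forces $N(-\lambda,\Hb_\mu)<\infty$ and hence $\Lambda\ge-\lambda$.

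One remark on your optional detour for $Q_2=0$ (a case the paper silently leaves aside): the sketch there is not quite right as stated. The intervals $\Delta_{x_k}(d_\alpha(x_k))$ need not be disjoint, so your bumps need not form an orthogonal family — for $\mu=\delta_0$ one has $d_\alpha(x)\sim 2|x|$, so these intervals are essentially nested; moreover Lemma \ref{max_interval} bounds $\mu\left(\Delta_{x_k}(d_\alpha(x_k))\right)$ from \emph{below}, the upper bound being available only for the slightly shrunken intervals. The conclusion $\Lambda\le 0$ is nevertheless immediate and needs none of this: since $\mu\ge 0$, one has $\mathbf{a}_\mu[f]\le\|f'\|_{L^2}^2$, so disjoint, widely spaced bumps of length $L$ give an infinite-dimensional family with $\mathbf{a}_\mu[f]\le CL^{-2}\|f\|^2$, whence $N(\varepsilon,\Hb_\mu)=\infty$ for every $\varepsilon>0$ and $\inf\sigma_{ess}(\Hb_\mu)\le 0$.
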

	\begin{proof}
		Recall that $\Lambda = \sup \{ \lambda \in \mathbb R: ~N(\lambda, \Hb_{\mu}) < \infty\}$. Assume that $-\lambda < -4Q_1$, or equivalently $\lambda/4 > Q_1$. Then, by Theorem \ref{est_for_N_supnorm}, we have
		\begin{align*}
			N(-\lambda, \Hb_{\mu}) \leq \sqrt{\| q_2^\ast\|_\infty} \mathcal L(\{ x: ~q_2^\ast(x) \geq \lambda/4\}).
		\end{align*}
		By the assumption on $\lambda$, the  Lebesgue measure above is finite. Therefore, we have $N(-\lambda, \Hb_{\mu}) < \infty$, i.e., $\Lambda \geq -\lambda$. 
		
		If $-\lambda > -Q_2/4$, then $4\lambda < Q_2$. Thus,
		\begin{align*}
			N(-\lambda, \Hb_{\mu}) \geq \sqrt{\lambda} \mathcal L(\{ x: ~q_\alpha^\ast(x) \geq 4\lambda\}) = \infty,
		\end{align*}
		since the set on the right-hand side contains at least one infinite interval $(-\infty, a)$ or $(b, \infty)$. Thus, $\Lambda \leq -\lambda$. 
	\end{proof}

	\subsection{Lieb-Thirring type estimates}
	 Now we are ready to establish the \textbf{LT} estimates for any $\gamma > 0$ for an arbitrary Radon measure $\mu$. Moreover, we prove lower estimates for $\mathbf{LT}_{\gamma}$ for all $\gamma > 0$; in the special case $\gamma = \frac{1}{2}$, the lower bound can be expressed explicitly in terms of $\mu$ itself. For other values of $\gamma$, our estimates are expressed via the Otelbaev's function. Throughout this section, we assume that the negative part of the spectrum of $\Hb_{\mu}$ is discrete, that is, $q_1^*(x)\rightarrow 0 \quad$ as $x\rightarrow \pm \infty$ according to Lemma \ref{q*_and_int} and Remark \ref{rem_discr}. 
	
	We recall that for $\gamma>0$, 
	\begin{equation}\label{LT_in_terms_of_N}
		\mathrm{LT}_{\gamma}(\Hb_{\mu})\equiv    \sum_{\nu=1}^\infty |\lambda_{\nu}|^\gamma = \gamma \int_{-\infty}^{0} |\lambda|^{\gamma - 1} N(\lambda,\Hb_\mu) ~d\lambda,
	\end{equation}
	as soon as one of the quantities in \eqref{LT_in_terms_of_N} is finite.
	
	We now establish a reverse \textbf{LT} estimate for $\gamma = 1/2$. For completeness, the following theorem presents our result (a lower bound) alongside a known result (an upper bound) from \cite{HLT}. 
	
	\begin{theorem}\label{LT_estimate}
		Let $\mu$ be a Radon measure satisfying the Brinck-condition \eqref{Brinck}. Let $\Hb_\mu$ be the operator associated with the sesquilinear form \eqref{form}. Assume that the negative part of the spectrum of $\Hb_\mu$ is discrete. Let $\{\lambda_\nu\}$ be the negative eigenvalues of $\Hb_\mu$, then
		\begin{equation}\label{lt:0.5}
			\frac{\alpha}{32}  \mu(\mathbb{R})\leq \sum_{\nu=1}^\infty |\lambda_\nu|^{\frac{1}{2}} \leq \frac{1}{2}\mu(\mathbb R),
		\end{equation}
		where $\alpha = \left(\pi^2 + \frac{3}{4}\right)^{-1}$.
	\end{theorem}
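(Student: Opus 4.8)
The upper bound $\sum_\nu|\lambda_\nu|^{1/2}\le\frac12\mu(\mathbb R)$ is the sharp one-dimensional reverse estimate of \cite{HLT}, so only the lower (reverse) bound needs an argument. The plan is: (i) insert the counting-function lower bound of Theorem \ref{est_for_N_supnorm} into the layer-cake identity \eqref{LT_in_terms_of_N} with $\gamma=\frac12$; (ii) recognize the resulting $\lambda$-integral as $\frac18\int_{\mathbb R}q_\alpha^*\,dx$ via Cavalieri's principle; (iii) bound $\int_{\mathbb R}q_\alpha^*\,dx$ from below by $\frac{\alpha}{4}\mu(\mathbb R)$ using the covering built in Section \ref{sec:decomposition}.

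For (i)--(ii): with $\gamma=\frac12$, identity \eqref{LT_in_terms_of_N} reads $\mathrm{LT}_{1/2}(\Hb_\mu)=\frac12\int_0^\infty t^{-1/2}N(-t,\Hb_\mu)\,dt$, an equality in $[0,\infty]$ (Tonelli applied to a nonnegative integrand), so no a priori finiteness is needed. The left inequality in \eqref{thm:estimate_eq2} gives $N(-t,\Hb_\mu)\ge\sqrt t\,\mathcal L(M_\alpha(4t))$ for all $t>0$, with $\alpha=(\pi^2+\tfrac34)^{-1}$. Hence $\mathrm{LT}_{1/2}(\Hb_\mu)\ge\frac12\int_0^\infty\mathcal L(M_\alpha(4t))\,dt$; the substitution $u=4t$ turns this into $\frac18\int_0^\infty\mathcal L(\{x:q_\alpha^*(x)\ge u\})\,du=\frac18\int_{\mathbb R}q_\alpha^*(x)\,dx$, the last equality being Cavalieri's principle for the nonnegative (indeed bounded continuous) function $q_\alpha^*$.

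For (iii): I would run the construction of Section \ref{sec:decomposition} with this particular value of $\alpha$ --- which is legitimate, since that construction only uses Brinck's condition \eqref{Brinck} and not $\alpha=2$ --- obtaining the tiling $\{I_k\}_{k\in\mathbb K}$ of $\mathbb R$ and the measures $\mu_k$ with $\mu(\mathbb R)=\sum_k\mu_k(I_k)$. Fix $k$ with $\mu_k(I_k)>0$. By Lemma \ref{lemma:decomp}, $I_k=\Delta_{x_k}(d_\alpha(x_k))$ and $\mu_k(I_k)\,|I_k|=\frac1\alpha$; hence Lemma \ref{equiv_q*} gives $q_\alpha^*(y)\ge\frac14q_\alpha^*(x_k)=\frac{1}{4|I_k|^2}$ for every $y\in I_k$, so $\int_{I_k}q_\alpha^*\ge\frac{1}{4|I_k|}=\frac{\alpha}{4}\mu_k(I_k)$. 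Intervals with $\mu_k(I_k)=0$ contribute a nonnegative amount on the left and nothing on the right, so summing over $k\in\mathbb K$ and using that the $I_k$ tile $\mathbb R$ yields $\int_{\mathbb R}q_\alpha^*\ge\frac{\alpha}{4}\sum_k\mu_k(I_k)=\frac{\alpha}{4}\mu(\mathbb R)$. Combining with (ii) gives $\mathrm{LT}_{1/2}(\Hb_\mu)\ge\frac18\cdot\frac{\alpha}{4}\mu(\mathbb R)=\frac{\alpha}{32}\mu(\mathbb R)$; if $\mu(\mathbb R)=\infty$ the same chain forces $\mathrm{LT}_{1/2}=\infty$, consistently with the statement.

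There is no genuine obstacle: the substantive work --- the two-sided counting-function estimate --- is already done in Theorem \ref{est_for_N_supnorm}, and what remains is assembly. The only mildly delicate points are keeping the numerical constants straight through the two substitutions and noting that the machinery of Section \ref{sec:decomposition}, together with Lemmas \ref{lemma:decomp} and \ref{equiv_q*}, applies verbatim for the unconventional averaging parameter $\alpha=(\pi^2+\tfrac34)^{-1}$ rather than the value $\alpha=2$ used in the upper estimates.
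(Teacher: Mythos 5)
Your proposal is correct and follows essentially the same route as the paper: the paper also cites \cite{HLT} for the upper bound, inserts the lower bound of \eqref{thm:estimate_eq2} into \eqref{LT_in_terms_of_N} and applies Fubini (your Cavalieri step) to get $\frac{1}{8}\int_{\mathbb R}q_\alpha^*\,dx$, and then bounds this integral from below interval-by-interval using the Section \ref{sec:decomposition} decomposition with $\alpha=(\pi^2+\tfrac34)^{-1}$ together with Lemmas \ref{lemma:decomp} and \ref{equiv_q*}, arriving at the same constant $\frac{\alpha}{32}$. Your remark that the decomposition machinery applies verbatim for this value of $\alpha$ is exactly the (implicit) step the paper relies on as well.
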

	The theorem should be understood in the sense that all the occurring terms in \eqref{lt:0.5} are either simultaneously finite or infinite. 
	\begin{proof}
		The upper bound was proved in \cite{HLT}. Let us prove the lower bound. By \eqref{thm:estimate_eq2},
		\begin{align*}
			\sum_{\nu=1}^\infty |\lambda_\nu|^\gamma &\geq \gamma \int_{-\infty}^{0} |\lambda|^{\gamma - 1} \sqrt{|\lambda|} \mathcal{L} \left( \left\{x\in \mathbb{R}: q^*_{\alpha}(x) \geq 4|\lambda|\right\} \right) ~d\lambda\\
			&\geq \gamma \int_{-\infty}^{0} |\lambda|^{\gamma - \frac{1}{2}} \int_{\mathbb{R}} \chi_{\left\{x\in \mathbb{R}: q^*_{\alpha}(x) \geq 4|\lambda|\right\}}(y)~dy~d\lambda,
		\end{align*}
		where $\chi_A$ is the indicator function of the set $A$. Using Fubini's theorem, we obtain
		\begin{equation}\label{LT_type_est_lower_bound}
			\sum_{\nu=1}^\infty |\lambda_\nu|^\gamma \geq \gamma\int_{\mathbb{R}}\int_{-\frac{1}{4}q_\alpha^*(x)}^0 |\lambda|^{\gamma - \frac{1}{2}}~d\lambda ~dx = \frac{\gamma}{\gamma + \frac{1}{2}} \left(\frac{1}{4}\right)^{\gamma+\frac{1}{2}} \int_{\mathbb{R}} \left(q_\alpha^*(x)\right)^{\gamma+\frac{1}{2}} ~dx.
		\end{equation}
		We now set $\gamma = 1/2$. Let $I_k$ again be the intervals from Section \ref{sec:decomposition} and $x_k$ be their middle points. Then 
		\begin{equation*}
			\sum_{\nu=1}^\infty |\lambda_\nu|^{\frac{1}{2}} \geq \frac{1}{8} \sum_{k \in \mathbb K} \int_{I_k} q_\alpha^\ast(x)~dx
			\geq \frac{1}{8}\sum_{k \in \mathbb K: ~\mu_k(I_k) > 0} \int_{I_k} q_\alpha^\ast(x)~dx.
		\end{equation*}
		Lemma \ref{equiv_q*} implies now 
		\begin{equation*}
			\sum_{\nu=1}^\infty |\lambda_\nu|^{\frac{1}{2}} \geq \frac{1}{32} \sum_{k \in \mathbb K: ~\mu_k(I_k) > 0} |I_k| q_\alpha^*(x_k).
		\end{equation*}
		Using $q_\alpha^\ast(x_k) = \frac{1}{d_\alpha(x_k)^2}$, Lemma \ref{lemma:decomp} yields: 
		\begin{equation*}
			\sum_{\nu=1}^\infty |\lambda_\nu|^{\frac{1}{2}} \geq \frac{1}{32} \sum_{k \in \mathbb K: ~\mu_k(I_k) > 0} |I_k| \frac{1}{|I_k|^2} \geq \frac{\alpha}{32} \sum_{k \in \mathbb K: ~\mu_k(I_k) > 0} \mu_k(I_k) = \frac{\alpha}{32}  \mu(\mathbb{R}).
		\end{equation*}
		This finishes the proof.
	\end{proof}

	For a general $\gamma>0$,  the classical Lieb-Thirring estimate, where the potential is raised to the power of $\frac{1}{2} + \gamma$, seems not to have a reasonable analogy for a measure-potential, unlike $\gamma=\frac12$. Here, we establish a Lieb-Thirring-type estimate expressed in terms of the function $q_\alpha^\ast$. We want to emphasize that the following result covers the subcritical case $0 < \gamma < \frac 12$ as well.
	
	\begin{theorem}\label{LT_type_estimate}
		Let $\mu$ be a Radon measure satisfying  \eqref{Brinck} and $\Hb_\mu$ be the operator generated by quadratic form \eqref{form}. Suppose that the negative part of the spectrum of $\Hb_\mu$ is discrete. Let $\gamma>0$ and let $\{\lambda_\nu\}$ be the negative eigenvalues of $\Hb_\mu$, then
		\begin{equation*}
			\frac{\gamma}{\gamma + \frac{1}{2}} \left(\frac{1}{4}\right)^{\gamma+\frac{1}{2}}  \int_{\mathbb R} |q_{\alpha}^*(x)|^{\frac{1}{2} + \gamma} dx \leq \sum_{\nu=1}^\infty |\lambda_\nu|^\gamma \leq 4^{\gamma+1} \int_{\mathbb{R}} \left| q_\beta^*(x)\right|^{  \frac{1}{2} + \gamma}dx
		\end{equation*}
		if the integrals converge (they do this simultaneously). Otherwise, $\sum_{\nu=1}^\infty |\lambda_\nu|^\gamma = \infty$. (Here, $\alpha = \left(\pi^2 + \frac{3}{4}\right)^{-1}$ and $\beta = 2$.)
	\end{theorem}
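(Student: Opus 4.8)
The plan is to obtain both inequalities from the layer--cake identity \eqref{LT_in_terms_of_N} for $\mathrm{LT}_\gamma(\Hb_\mu)$ combined with the counting--function bounds of Theorem \ref{est_for_N_supnorm}, the only real tool being Fubini's theorem for non-negative integrands. For the lower bound, nothing new is needed: the chain of estimates culminating in \eqref{LT_type_est_lower_bound} in the proof of Theorem \ref{LT_estimate} was carried out for an \emph{arbitrary} $\gamma>0$ (the value $\gamma=\tfrac12$ being substituted only afterwards), and it already gives
\[
\sum_{\nu=1}^\infty |\lambda_\nu|^\gamma \;\geq\; \frac{\gamma}{\gamma+\frac12}\Bigl(\frac14\Bigr)^{\gamma+\frac12}\int_{\mathbb R}\bigl(q_\alpha^*(x)\bigr)^{\frac12+\gamma}\,dx .
\]
So I would simply quote that argument, which rests on the left inequality in \eqref{thm:estimate_eq2} followed by Fubini.

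For the upper bound, I would start from $\sum_\nu |\lambda_\nu|^\gamma=\gamma\int_0^\infty \lambda^{\gamma-1}N(-\lambda,\Hb_\mu)\,d\lambda$ and insert the third upper estimate of Theorem \ref{est_for_N_supnorm} with $\beta=2$, namely $N(-\lambda,\Hb_\mu)\le 4\int_{\mathbb R}\chi_{\{q_\beta^*(y)\ge\lambda/4\}}(y)\sqrt{q_\beta^*(y)}\,dy$. Swapping the order of integration (Tonelli, all integrands non-negative), the inner $\lambda$-integral collapses to $\int_0^{4q_\beta^*(y)}\lambda^{\gamma-1}\,d\lambda=\gamma^{-1}(4q_\beta^*(y))^\gamma$, and after collecting constants this is exactly $4^{\gamma+1}\int_{\mathbb R}(q_\beta^*(y))^{\frac12+\gamma}\,dy$. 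The one point that calls for a comment is the use of \eqref{LT_in_terms_of_N}: this is where the standing assumption of discreteness of the negative spectrum enters, guaranteeing $N(-\lambda,\Hb_\mu)<\infty$ for every $\lambda>0$ and hence legitimizing the layer--cake representation (which in any case holds as an identity in $[0,\infty]$).

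Finally, for the claim that the two integrals and $\mathrm{LT}_\gamma(\Hb_\mu)$ are simultaneously finite or infinite: if $\int_{\mathbb R}(q_\beta^*)^{\frac12+\gamma}\,dx<\infty$ the upper bound forces $\mathrm{LT}_\gamma(\Hb_\mu)<\infty$, and then the lower bound forces $\int_{\mathbb R}(q_\alpha^*)^{\frac12+\gamma}\,dx<\infty$. For the reverse direction I would note that $q_\alpha^*$ and $q_\beta^*$ are pointwise comparable: since $\alpha=(\pi^2+\tfrac34)^{-1}<2=\beta$, Lemma \ref{lem_qalpha_qbeta_equiv} gives $q_\alpha^*(x)\le q_\beta^*(x)\le\tfrac{\beta^2}{\alpha^2}q_\alpha^*(x)$ for all $x$, so the two integrals differ by at most a fixed multiplicative constant and one is finite iff the other is; consequently $\sum_\nu|\lambda_\nu|^\gamma=\infty$ exactly when either (hence both) integral diverges. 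I do not anticipate any genuine obstacle here --- the whole proof is a bookkeeping exercise around Fubini and the estimates of Theorem \ref{est_for_N_supnorm}; the only things to keep straight are the constants $4^{\gamma+1}$ and $\frac{\gamma}{\gamma+1/2}(1/4)^{\gamma+1/2}$ and the convergence dichotomy.
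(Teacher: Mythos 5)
Your proposal is correct and follows essentially the same route as the paper: the lower bound is exactly the general-$\gamma$ estimate \eqref{LT_type_est_lower_bound} obtained from the left inequality in Theorem \ref{est_for_N_supnorm} plus Fubini, and the upper bound is the layer-cake identity \eqref{LT_in_terms_of_N} combined with the third upper bound of Theorem \ref{est_for_N_supnorm} and Fubini, with the same constant $4^{\gamma+1}$. Your explicit use of Lemma \ref{lem_qalpha_qbeta_equiv} to justify the simultaneous convergence of the two integrals is a small addition the paper leaves implicit, and it is correct.
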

	
	\begin{proof}
		The lower bound follows from \eqref{LT_type_est_lower_bound}. We now derive the upper bound. By using \eqref{LT_in_terms_of_N} and the upper bound in Theorem \ref{est_for_N_supnorm}, one sees that
		\begin{equation*}
			\sum_{\nu=1}^\infty |\lambda_\nu|^\gamma \leq 4\gamma \int_{-\infty}^{0} |\lambda|^{\gamma - 1}  \int_{\left\{x\in\mathbb{R}: q_\beta^*(x)\geq \frac{|\lambda|}{4}\right\}} \sqrt{q_\beta^*(y)}~dy ~d\lambda.
		\end{equation*}
		Using Fubini's theorem, we derive
		\begin{equation*}
			\sum_{\nu=1}^\infty |\lambda_
			\nu|^\gamma \leq 4\gamma \int_{\mathbb{R}} \sqrt{q_\beta^*(x)}  \int_{-4q_\beta^*(x)}^0 |\lambda|^{\gamma - 1}d\lambda dx = 4^{\gamma+1} \int_{\mathbb{R}} \left( q_\beta^*(x)\right)^{\gamma + \frac{1}{2}}dx.
		\end{equation*}
		This yields the required estimate.
	\end{proof}
	
	This theorem and Lemma \ref{lem_qalpha_qbeta_equiv} show:
	\begin{theorem}
		Let $\mu$ be a positive Radon measure satisfying \eqref{MazCond}. Then, for every $\gamma > 0$, the following holds true:
		\begin{equation*}
			C_1  \int_{\mathbb{R}} \left| q_{1}^*(x)\right|^{  \frac{1}{2} + \gamma}~dx \leq \sum_{\nu=1}^\infty |\lambda_\nu|^\gamma \leq C_2  \int_{\mathbb{R}} \left| q_{1}^*(x)\right|^{  \frac{1}{2} + \gamma}~dx,
		\end{equation*}
		where
		\begin{equation*}
			C_1 = \frac{\gamma}{\gamma + \frac{1}{2}} \left( \frac{1}{2\pi^2 + \frac{3}{2}} \right)^{1 + 2\gamma},
			\qquad
			C_2 = 2^{4\gamma + 3}.
		\end{equation*}
	\end{theorem}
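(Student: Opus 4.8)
The plan is to obtain this statement as an immediate corollary of Theorem~\ref{LT_type_estimate} combined with the comparison Lemma~\ref{lem_qalpha_qbeta_equiv}; no new analytic ingredient is required, only a careful tracking of the constants. First I would record that condition \eqref{MazCond}, together with local finiteness of $\mu$, implies Brinck's condition \eqref{Brinck}: for $|x|$ large one has $\mu([x,x+1])\le\mu((x-1,x+2))\to 0$, while on a bounded region $\mu([x,x+1])$ is dominated by the finite measure of a fixed compact set. Moreover, by Lemma~\ref{q*_and_int} and Remark~\ref{rem_discr}, \eqref{MazCond} is equivalent to discreteness of the negative spectrum of $\Hb_\mu$. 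Hence the hypotheses of Theorem~\ref{LT_type_estimate} are satisfied, and we may start from
\begin{equation*}
\frac{\gamma}{\gamma+\tfrac12}\Bigl(\tfrac14\Bigr)^{\gamma+\frac12}\int_{\mathbb R}|q_\alpha^\ast(x)|^{\frac12+\gamma}\,dx\le\sum_{\nu=1}^\infty|\lambda_\nu|^\gamma\le 4^{\gamma+1}\int_{\mathbb R}|q_\beta^\ast(x)|^{\frac12+\gamma}\,dx,
\end{equation*}
with $\alpha=(\pi^2+\tfrac34)^{-1}$ and $\beta=2$.

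For the lower bound, since $\alpha<1$, Lemma~\ref{lem_qalpha_qbeta_equiv} applied with averaging parameters $\alpha$ and $1$ yields $q_1^\ast(x)\le\alpha^{-2}q_\alpha^\ast(x)$, i.e.\ $q_\alpha^\ast(x)\ge\alpha^2 q_1^\ast(x)$ pointwise. Raising to the power $\tfrac12+\gamma$ and integrating transforms the left inequality above into
\begin{equation*}
\sum_{\nu=1}^\infty|\lambda_\nu|^\gamma\ge\frac{\gamma}{\gamma+\tfrac12}\Bigl(\tfrac14\Bigr)^{\gamma+\frac12}\alpha^{1+2\gamma}\int_{\mathbb R}|q_1^\ast(x)|^{\frac12+\gamma}\,dx,
\end{equation*}
and the elementary identity $\bigl(\tfrac14\bigr)^{\gamma+\frac12}\alpha^{1+2\gamma}=\bigl(2(\pi^2+\tfrac34)\bigr)^{-(1+2\gamma)}=\bigl(2\pi^2+\tfrac32\bigr)^{-(1+2\gamma)}$ identifies the resulting constant with $C_1$. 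For the upper bound, since $\beta=2>1$, the same lemma applied with averaging parameters $1$ and $2$ gives $q_\beta^\ast(x)=q_2^\ast(x)\le 4\,q_1^\ast(x)$, hence $\int_{\mathbb R}|q_\beta^\ast|^{\frac12+\gamma}\le 4^{\frac12+\gamma}\int_{\mathbb R}|q_1^\ast|^{\frac12+\gamma}=2^{1+2\gamma}\int_{\mathbb R}|q_1^\ast|^{\frac12+\gamma}$, so the right inequality above becomes $\sum_{\nu}|\lambda_\nu|^\gamma\le 4^{\gamma+1}\,2^{1+2\gamma}\int_{\mathbb R}|q_1^\ast|^{\frac12+\gamma}=2^{4\gamma+3}\int_{\mathbb R}|q_1^\ast|^{\frac12+\gamma}$, which is $C_2$.

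Finally, the assertion that all three quantities are simultaneously finite or infinite is inherited from Theorem~\ref{LT_type_estimate} via the two-sided pointwise bounds $\alpha^2 q_1^\ast\le q_\alpha^\ast$ and $q_1^\ast\le q_\beta^\ast\le 4q_1^\ast$, which force $\int_{\mathbb R}|q_1^\ast|^{\frac12+\gamma}$, $\int_{\mathbb R}|q_\alpha^\ast|^{\frac12+\gamma}$ and $\int_{\mathbb R}|q_\beta^\ast|^{\frac12+\gamma}$ to converge or diverge together. There is no genuine obstacle here; the only point requiring attention is to use the correct one of the two inequalities in Lemma~\ref{lem_qalpha_qbeta_equiv} for each direction — recalling that the larger averaging parameter yields the larger value of $q^\ast$ — and to verify that the constant bookkeeping reproduces exactly the stated $C_1$ and $C_2$.
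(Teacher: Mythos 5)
Your proposal is correct and follows exactly the paper's own route: the paper derives this theorem directly from Theorem \ref{LT_type_estimate} combined with Lemma \ref{lem_qalpha_qbeta_equiv}, and your constant bookkeeping ($(\tfrac14)^{\gamma+\frac12}\alpha^{1+2\gamma}=(2\pi^2+\tfrac32)^{-(1+2\gamma)}$ and $4^{\gamma+1}\cdot 2^{1+2\gamma}=2^{4\gamma+3}$) reproduces $C_1$ and $C_2$ correctly. The preliminary observation that \eqref{MazCond} implies Brinck's condition and discreteness of the negative spectrum is a sound (and slightly more explicit) justification of the hypotheses than the paper bothers to give.
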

	
	Next, we estimate $\mathbf{LT}_\gamma(\Hb_\mu)$ in terms of the intervals $I_k$ from Section \ref{sec:decomposition} with respect to the constant $\alpha = 2$
	
	\begin{theorem}
		Let $\mu$ be a Radon measure satisfying \eqref{MazCond}. Let $\gamma>0$ and $\{\lambda_\nu\}$ be the negative eigenvalues of $\mathbf{H}_\mu$. Then, the following holds true:
		\begin{equation*}
			\sum_{\nu=1}^\infty |\lambda_\nu|^\gamma \leq 2^{2\gamma} \sum_{k\in \mathbb{K}} \mu_k(I_k)^{2\gamma}.
		\end{equation*}
	\end{theorem}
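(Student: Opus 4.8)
The plan is to bypass the $q_\beta^\ast$-formulation of Theorem \ref{LT_type_estimate} and instead feed the sharp discrete counting estimate of Lemma \ref{N_leq_sum_N_k} directly into the layer-cake identity \eqref{LT_in_terms_of_N}. Throughout, the intervals $I_k$ and measures $\mu_k$ are those of the decomposition in Section \ref{sec:decomposition} built with the parameter $\alpha = 2$.

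First I would record, using \eqref{LT_in_terms_of_N},
\[
\mathrm{LT}_\gamma(\Hb_\mu) = \gamma\int_0^\infty \lambda^{\gamma-1}\,N(-\lambda,\Hb_\mu)\,d\lambda,
\]
and then substitute the bound $N(-\lambda,\Hb_\mu) \le \#\{k\in\mathbb{K}: \lambda\le 1/|I_k|^2\}$ from Lemma \ref{N_leq_sum_N_k}. Only the bounded intervals $I_k$ contribute: for an unbounded $I_k$ one has $\mu_k(I_k)=0$ and the threshold $1/|I_k|^2$ is to be read as $0$, so the condition $\lambda \le 1/|I_k|^2$ is never met for $\lambda>0$. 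Writing the counting function as $\sum_{k:\,\mu_k(I_k)>0}\mathbf{1}_{[0,1/|I_k|^2]}(\lambda)$ and interchanging sum and integral (Tonelli, all terms nonnegative), the $\lambda$-integral of each summand is elementary, $\gamma\int_0^{1/|I_k|^2}\lambda^{\gamma-1}\,d\lambda = |I_k|^{-2\gamma}$, so that
\[
\mathrm{LT}_\gamma(\Hb_\mu) \le \sum_{k\in\mathbb{K}:\,\mu_k(I_k)>0}\frac{1}{|I_k|^{2\gamma}}.
\]

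Next I would invoke part \eqref{lemma:decomp_1} of Lemma \ref{lemma:decomp}: since $\alpha=2$, for each $k$ with $\mu_k(I_k)>0$ we have $\mu_k(I_k)\,|I_k| = \tfrac12$, hence $1/|I_k| = 2\mu_k(I_k)$ and $1/|I_k|^{2\gamma} = 2^{2\gamma}\mu_k(I_k)^{2\gamma}$. Substituting and then enlarging the index set back to all of $\mathbb{K}$ (the added terms are $0$) gives exactly $\mathrm{LT}_\gamma(\Hb_\mu)\le 2^{2\gamma}\sum_{k\in\mathbb{K}}\mu_k(I_k)^{2\gamma}$.

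I do not expect a genuine obstacle here; the computation is short once one realizes that the relevant input is the integer-valued bound of Lemma \ref{N_leq_sum_N_k} rather than the $\sqrt{q_\beta^\ast}$-integral used in Theorem \ref{LT_type_estimate}. The only points needing a word of care are the convention for unbounded intervals in the counting bound and the fact — already built into \eqref{LT_in_terms_of_N} — that the identity between the eigenvalue sum and the integral transmits finiteness in both directions, so that the stated inequality is meaningful (and trivially true) also when the right-hand side is infinite.
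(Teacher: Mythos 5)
Your argument is correct and is essentially identical to the paper's proof: both feed the counting bound of Lemma \ref{N_leq_sum_N_k} (with $\alpha=2$) into the layer-cake identity \eqref{LT_in_terms_of_N}, swap sum and integral, evaluate $\gamma\int_0^{1/|I_k|^2}\lambda^{\gamma-1}\,d\lambda = |I_k|^{-2\gamma}$, and convert via $\mu_k(I_k)\,|I_k|=\tfrac12$ from Lemma \ref{lemma:decomp}. Your explicit treatment of the unbounded intervals is a minor point of care that the paper leaves implicit, but the route is the same.
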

	\begin{proof}
		In notations of Lemma \ref{N_leq_sum_N_k}, we have  
		\begin{equation*}
			\sum_{\nu=1}^\infty |\lambda_\nu|^\gamma \leq \gamma \int_{-\infty}^{0} |\lambda|^{\gamma - 1} \left(\sum_{k \in \mathbb K:~-\lambda \geq -\frac{1}{|I_k|^2}} 1\right) ~d\lambda.
		\end{equation*}
		We will use use the following notation for the shifted Heaviside function:
		\begin{equation*}
			\mathbb{H}_{-\frac{1}{|I_k|^2}}(-\lambda) :=
			\begin{cases}
				1 & \text{if } -\lambda\geq -\frac{1}{|I_k|^2},\\
				0 & \text{otherwise.}
			\end{cases}
		\end{equation*}
		Then, the last estimate can be rewritten as follows:
		\begin{align*}
			\sum_{\nu=1}^\infty |\lambda_\nu|^\gamma &\leq  \gamma \int_{-\infty}^{0} |\lambda|^{\gamma - 1} \sum_{k\in \mathbb{K}} \mathbb{H}_{-\frac{1}{|I_k|^2}}(-\lambda) ~d\lambda \leq \sum_{k\in \mathbb{K}} \gamma \int_{-\infty}^{0} |\lambda|^{\gamma - 1} \mathbb{H}_{-\frac{1}{|I_k|^2}}(-\lambda) ~d\lambda\\
			&= \sum_{k\in \mathbb{K}} \gamma \int_{-\frac{1}{|I_k|^2}}^{0} |\lambda|^{\gamma - 1}  d\lambda = \sum_{k\in \mathbb{K}} \frac{1} {|I_k|^{2\gamma}}.
		\end{align*}
		Therefore, due to Lemma \ref{lemma:decomp}, we have proven the result.
	\end{proof}
	
	\appendix
	
	\section{Comparison with preceding results}
	Here, we recall some known results and compare them with Theorem \ref{LT_type_estimate}. Throughout this section, we assume that \( q \in L_{\text{loc}}^1(\mathbb{R}) \) is a non-negative function satisfying 
	\begin{equation}\label{cond_disc}
		\int_{x}^{x+a} q(x)~dx \rightarrow 0
		\quad
		\text{as} \quad |x|\rightarrow \infty
	\end{equation}
	for any fixed \( a > 0 \). Let $\mu$ be the measure defined by
	\begin{equation}\label{mu_q}
		\mu(A) = \int_{A}q(x)dx
	\end{equation}
	for a Borel set $A\subset \mathbb{R}$. Due to \eqref{cond_disc}, the negative part of the spectrum of $\Hb_\mu$ consists of negative eigenvalues $\{\lambda_\nu(q)\}$.
	\subsection{Comparison with the Lieb-Thirring estimate}
	We assume that $\gamma > 1/2$ and use notation $p=1/2 + \gamma$.  Since the measure associated with $q$ is continuous (no point masses), the following result holds: 
	\begin{proposition}\label{avereging}
		Let $\alpha >0$ and $q \in L_{\text{loc}}^1(\mathbb{R})$ be a non-negative function. Then, 
		\begin{equation*}
			q_\alpha^*(x) = \frac{\alpha}{d_\alpha(x)} \int_{x - d_\alpha(x)/2}^{x + d_\alpha(x)/2} q(y)dy.
		\end{equation*}
	\end{proposition}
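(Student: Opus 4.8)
The plan is to reduce everything to the characterization of $d_\alpha(x)$ for atomless measures that was already recorded in Remark~\ref{rem_cont_meas}, after which the statement is a one-line rearrangement. First I would observe that the measure $\mu$ attached to $q$ via \eqref{mu_q} is absolutely continuous with respect to the Lebesgue measure; in particular it has no point masses, so it is a \emph{continuous} measure in the sense used above, and $\mu(\Delta_x(d)) = \int_{x-d/2}^{x+d/2} q(y)\,dy$ for every $d>0$ (the endpoints of the interval carry no Lebesgue mass, so closed versus open is immaterial).

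Next, by Remark~\ref{rem_cont_meas}, for such a $\mu$ the number $d_\alpha(x)$ is the unique solution of $\mu(\Delta_x(d)) = \tfrac{1}{\alpha d}$; that is,
\[
\int_{x-d_\alpha(x)/2}^{x+d_\alpha(x)/2} q(y)\,dy = \frac{1}{\alpha\, d_\alpha(x)}.
\]
If one prefers a self-contained argument not appealing to the remark, the same identity drops out of Lemma~\ref{max_interval}: that lemma gives $\mu(\Delta_x(d_\alpha(x))) \geq \tfrac{1}{\alpha d_\alpha(x)}$ together with $\mu(\Delta_x(d_\alpha(x)-\varepsilon)) < \tfrac{1}{\alpha(d_\alpha(x)-\varepsilon)}$ for $0<\varepsilon<d_\alpha(x)$, and since $d \mapsto \int_{x-d/2}^{x+d/2} q\,dy$ is continuous — by dominated convergence, using $q \in L^1_{\mathrm{loc}}(\mathbb R)$ — one may let $\varepsilon \downarrow 0$ in the second inequality to obtain the reverse bound, hence equality.

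Finally I would simply rearrange: multiplying the displayed identity by $\alpha/d_\alpha(x)$ and recalling $q_\alpha^*(x) = 1/d_\alpha(x)^2$ gives
\[
\frac{\alpha}{d_\alpha(x)} \int_{x-d_\alpha(x)/2}^{x+d_\alpha(x)/2} q(y)\,dy = \frac{1}{d_\alpha(x)^2} = q_\alpha^*(x),
\]
which is the assertion. There is no real obstacle here; the only mildly delicate point is upgrading the strict inequality of Lemma~\ref{max_interval} to an equality, which requires the continuity of $d\mapsto \mu(\Delta_x(d))$, and this is exactly what the absence of atoms — guaranteed by $q\in L^1_{\mathrm{loc}}$ — provides. (Recall that for measures with point masses, as the example $\mu=\delta_{-1}+\delta_1$ in the text shows, the analogous identity genuinely fails, so this hypothesis on $q$ cannot be dropped.)
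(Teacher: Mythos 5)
Your proof is correct and follows essentially the same route as the paper: both arguments hinge on the absence of atoms to show that $d_\alpha(x)$ satisfies the equality $\int_{x-d_\alpha(x)/2}^{x+d_\alpha(x)/2} q(y)\,dy = \frac{1}{\alpha d_\alpha(x)}$, after which the identity is a rearrangement using $q_\alpha^*(x)=1/d_\alpha(x)^2$. The paper derives this by a direct monotonicity-and-continuity argument for $f_x(d)=\int_{x-d/2}^{x+d/2}q$ versus $h_x(d)=1/(\alpha d)$ (in effect reproving Remark~\ref{rem_cont_meas} in this setting), while you either cite that remark or upgrade the strict inequality of Lemma~\ref{max_interval} by a limiting argument --- a cosmetic, not substantive, difference.
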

	\begin{proof}
		For a  fixed $x\in \mathbb{R}$, consider two monotonous function of the variable $d\in(0,\infty)$:
		\begin{equation*}
			f_x(d): = \int_{x-d/2}^{x+d/2} q(y)dy,
			\qquad
			h_x(d): = \frac{1}{\alpha d}. 
		\end{equation*}
		These functions are continuous in $d$, moreover, the second one is strictly decaying. For $d$ small enough,  $f_x(d) < h_x(d)$, while for $d$ large enough, $f_x(d) > h_x(d)$. Thus, there exists a unique solution $d_x$ to the equation $f_x(d_x) = h_x(d_x)$. By the definition of $q_\alpha^*$, we have $q_\alpha^*(x) = 1/d_x^2$, that is $d_\alpha(x) = d_x$. Then,
		\begin{equation*}
			q_\alpha^*(x) = \frac{1}{d_\alpha(x)^2} =  \frac{\alpha}{d_\alpha(x)} \int_{x - d_\alpha(x)/2}^{x + d_\alpha(x)/2} q(y)~dy.
		\end{equation*}
		This completes the proof.
	\end{proof}
	The following result immediately follows from the classical Lieb-Thirring estimate, Lemma \ref{equiv_q*}, and the lower bound in Theorem \ref{LT_type_estimate}. However, we would like to give an alternative proof.
	\begin{theorem}
		Let $\alpha>0$, $1<p<\infty$, then there exists  $C_{p,\alpha}>0$ depending only on $p$ and $\alpha$ such that for any non-negative function $q\in L^{p}(\mathbb{R})$,  the estimate holds
		\begin{equation*}
			\|q_{\alpha,\mu}^*\|_{L^{p}(\mathbb{R})} \leq C_{p,\alpha} \|q\|_{L^{p}(\mathbb{R})}.
		\end{equation*}
	\end{theorem}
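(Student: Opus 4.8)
The plan is to recognize that, for a measure with density $q \in L^1_{\mathrm{loc}}(\mathbb R)$, Otelbaev's function $q^*_{\alpha,\mu}$ is pointwise dominated by a fixed multiple of the Hardy--Littlewood maximal function of $q$; the theorem is then an immediate consequence of the classical maximal inequality, with $C_{p,\alpha} = \alpha A_p$, where $A_p$ is the $L^p$ operator norm of the maximal operator.

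First I would dispose of the trivial case $q \equiv 0$: here $\mu = 0$, so $q^*_\alpha \equiv 0$ and the estimate holds with any constant. Assume henceforth $q \neq 0$. Then $\mu \neq 0$, and from the properties established earlier, $q^*_\alpha$ is continuous (hence measurable) and strictly positive. Since $q \geq 0$ lies in $L^1_{\mathrm{loc}}(\mathbb R)$ and is not identically zero, Proposition \ref{avereging} applies (the two monotone functions $f_x(d) = \int_{\Delta_x(d)} q$ and $h_x(d) = (\alpha d)^{-1}$ do cross, because $d\, f_x(d) \to \infty$ as $d \to \infty$ while $h_x(d)\to 0$), giving, for every $x \in \mathbb R$,
\[
  q^*_\alpha(x) = \frac{\alpha}{d_\alpha(x)} \int_{\Delta_x(d_\alpha(x))} q(y)\, dy, \qquad 0 < d_\alpha(x) < \infty.
\]

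The key step is then to read the right-hand side as $\alpha$ times the average of $q$ over the interval $\Delta_x(d_\alpha(x))$, which is centered at $x$ and has length $d_\alpha(x)$:
\[
  q^*_\alpha(x) = \alpha\, \frac{1}{|\Delta_x(d_\alpha(x))|} \int_{\Delta_x(d_\alpha(x))} q(y)\, dy \leq \alpha\, (Mq)(x),
\]
where $Mq(x) := \sup_{r > 0} \tfrac{1}{2r}\int_{x - r}^{x + r} q(y)\, dy$ is the centered Hardy--Littlewood maximal function; here we use $q \geq 0$, so that the choice $r = d_\alpha(x)/2$ is admissible in the supremum. By the Hardy--Littlewood maximal theorem, for $1 < p < \infty$ there is a constant $A_p$, depending only on $p$, with $\|Mq\|_{L^p(\mathbb R)} \leq A_p \|q\|_{L^p(\mathbb R)}$. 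Combining this with the pointwise bound,
\[
  \|q^*_{\alpha,\mu}\|_{L^p(\mathbb R)} \leq \alpha \|Mq\|_{L^p(\mathbb R)} \leq \alpha A_p \|q\|_{L^p(\mathbb R)},
\]
which is the assertion with $C_{p,\alpha} = \alpha A_p$.

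I do not expect a genuine obstacle: the substance of the statement is precisely the $L^p$-boundedness of the maximal operator, and the only point needing attention is the applicability of Proposition \ref{avereging}, i.e.\ that the infimum defining $q^*_\alpha$ is attained at a finite positive radius $d_\alpha(x)$ --- which follows from $q \not\equiv 0$ as indicated above. (Alternatively, as noted just before the theorem, the estimate can also be extracted from the classical Lieb--Thirring inequality together with the lower bound in Theorem \ref{LT_type_estimate} and Lemma \ref{equiv_q*}; the maximal-function route is, however, shorter and more transparent, and it gives the bound with an explicit structure for the constant.)
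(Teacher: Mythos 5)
Your proposal is correct and follows essentially the same route as the paper: the paper likewise invokes Proposition \ref{avereging} to write $q_\alpha^*(x)$ as $\alpha$ times an average of $q$ over $\Delta_x(d_\alpha(x))$, deduces the pointwise bound $q_\alpha^*(x)\leq \alpha\, Mq(x)$, and concludes via the Hardy--Littlewood maximal inequality with $C_{p,\alpha}=\alpha C_p$. Your extra remarks on the trivial case $q\equiv 0$ and on the crossing of $f_x$ and $h_x$ are harmless refinements of the same argument.
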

	\begin{proof}
		We recall the definition of the Hardy-Littlewood maximal function:
		\begin{equation*}
			Mf(x) = \sup_{r>0} \frac{1}{|B(x,r)|} \int_{B(x,r)} |f(x)| dx.
		\end{equation*}
		Using Proposition \ref{avereging}, we obtain the estimate $q_\alpha^*(x) \leq \alpha Mq(x)$. Therefore, using the Hardy–Littlewood maximal inequality, we derive
		\begin{equation*}
			\|q_{\alpha,\mu}^*\|_{L^{p}(\mathbb{R})} \leq \alpha \|Mq\|_{L^p(\mathbb{R})} \leq \alpha C_{p} \|q\|_{L^{p}(\mathbb{R})}.
		\end{equation*}
		This completes the proof.
	\end{proof}
	
	The converse estimate does not hold. Consider the following example.
	
	\begin{example}\label{contr_examp_LT}
		Let $p>1$ and $x_k$ be the center point of $ \mathcal{F}_{k}^+ = [2^{k-1}, 2^k]$. We define
		\begin{equation*}
			q(x) = 
			\begin{cases}
				2^{\frac{k}{p}}, \quad &x \in \left(x_k - \frac{1}{2^{k + 1}},  x_k + \frac{1}{2^{k + 1}} \right) \text{ and } k \text{ is even},\\
				0, \quad & \text{otherwise}.
			\end{cases}
		\end{equation*}
	\end{example}
	
	\begin{proposition}\label{comp_LT}
		Let $\alpha>0$ and $1<p<\infty$. Then, for  $q\in L_{loc}^1(\mathbb{R})$, just defined, 
		\begin{equation*}
			\|q\|_{L^p(\mathbb{R})} = \infty
			\quad
			\text{and}
			\quad
			\|q_{\alpha,\mu}^*\|_{L^p(\mathbb{R})} < \infty,
		\end{equation*}
		where $\mu$ is the measure given by \eqref{mu_q}.
	\end{proposition}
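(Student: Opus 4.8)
The first assertion is immediate: $q$ equals $2^{k/p}$ on an interval of length $2^{-k}$ for each even $k$ and vanishes otherwise, so
\[
  \|q\|_{L^p(\R)}^p=\sum_{k\ \mathrm{even}}\bigl(2^{k/p}\bigr)^p\,2^{-k}=\sum_{k\ \mathrm{even}}1=\infty .
\]
All the work is in showing $\|q_{\alpha,\mu}^*\|_{L^p(\R)}<\infty$. Throughout I write $p'=p/(p-1)$, denote by $\mathrm{bump}_k=\bigl(x_k-2^{-k-1},x_k+2^{-k-1}\bigr)$ the interval carrying the $k$-th spike, and set $\mu_k:=\mu(\mathrm{bump}_k)=2^{k/p}\cdot 2^{-k}=2^{-k/p'}$; since $p>1$ we have $\mu_k\to0$ and $\mu(\R)=\sum_{k\ \mathrm{even}}2^{-k/p'}<\infty$. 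The spike $\mathrm{bump}_k$ is centred at $x_k=3\cdot2^{k-2}$, so consecutive even-indexed spikes are a distance of order $2^{k}$ apart, whereas $2^{k/p'}\ll 2^{k}$ because $p'>1$.

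The crucial point — and the only delicate step — is that although $q$, and likewise its Hardy--Littlewood maximal function, is of size $2^{k/p}\to\infty$ on $\mathrm{bump}_k$, the Otelbaev function $q_\alpha^*$ is only of the much smaller size $2^{-2k/p'}$ throughout a whole neighbourhood of $\mathrm{bump}_k$. Concretely, the plan is to prove: there is $k_0=k_0(\alpha,p)$ such that for every even $k\ge k_0$,
\[
  q_\alpha^*(x)\le \frac{4\alpha^2}{2^{2k/p'}}\qquad\text{for all }x\in A_k:=\Delta_{x_k}\!\Bigl(\tfrac{2^{k/p'}}{4\alpha}\Bigr).
\]
To see this, set $d_0=\tfrac{2^{k/p'}}{2\alpha}$. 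For $x\in A_k$ the interval $\Delta_x(d_0)$ contains all of $\mathrm{bump}_k$, but — because $d_0$ is of order $2^{k/p'}$, hence much smaller than the distance from $x_k$ to the neighbouring spikes; this is exactly where $k\ge k_0$ is used — it meets no other spike, so $\mu(\Delta_x(d_0))=\mu_k=2^{-k/p'}<2\cdot2^{-k/p'}=\tfrac1{\alpha d_0}$. Thus $d_0$ belongs to the set over which the infimum in \eqref{Otelb.function} is taken, giving $q_\alpha^*(x)\le d_0^{-2}$, as claimed. (Equivalently, one checks $d_\alpha(x_k)=2^{k/p'}/\alpha$ and applies Lemma~\ref{equiv_q*}.) In particular, since $\mathrm{bump}_k\subset A_k$ for $k\ge k_0$, this also controls $q_\alpha^*$ on $\supp\mu$.

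Off the sets $A_k$ one uses only the trivial bound: if $x\notin\supp\mu$ then for every $d<2\operatorname{dist}(x,\supp\mu)$ one has $\mu(\Delta_x(d))=0<\tfrac1{\alpha d}$, so $d$ again lies in the infimum set of \eqref{Otelb.function}, whence $q_\alpha^*(x)\le \bigl(4\operatorname{dist}(x,\supp\mu)^2\bigr)^{-1}$. Now assemble the $L^p$ estimate by splitting $\R$ into three regions. First, fix a bounded interval $K\supset[-1,1]$ containing every spike with index $<k_0$; on $K$ the function $q_\alpha^*$ is bounded (Lemma~\ref{Brinc_it_terms_q*}, since $\mu$ satisfies \eqref{Brinck}), so $\int_K|q_\alpha^*|^p<\infty$. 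Second, on each $A_k$ ($k\ge k_0$ even), using $|A_k|=\tfrac{2^{k/p'}}{4\alpha}$,
\[
  \int_{A_k}|q_\alpha^*|^p\le \Bigl(\tfrac{4\alpha^2}{2^{2k/p'}}\Bigr)^{p}\cdot\tfrac{2^{k/p'}}{4\alpha}=C_{\alpha,p}\,2^{k(1-2p)/p'},
\]
and $\sum_k 2^{k(1-2p)/p'}<\infty$ because $1-2p<0$ and $p'<\infty$. Third, on $\R\setminus\bigl(K\cup\bigcup_{k\ge k_0}A_k\bigr)$ every point lies off $\supp\mu$; classifying such a point by its nearest spike $\mathrm{bump}_k$ and noting that there $\operatorname{dist}(x,\supp\mu)\ge c_\alpha 2^{k/p'}$ (since $x\notin A_k$), the trivial bound together with $2p>1$ yields
\[
  \int_{\R\setminus(K\cup\bigcup A_k)}|q_\alpha^*|^p\ \lesssim\ \sum_{k\ge k_0}\int_{c_\alpha 2^{k/p'}}^{\infty}\frac{dt}{t^{2p}}\ \lesssim\ \sum_{k\ge k_0}\bigl(2^{k/p'}\bigr)^{1-2p}<\infty,
\]
the left tail $(-\infty,-1)$ being treated in the same way since there $\operatorname{dist}(x,\supp\mu)\ge|x|$. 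Summing the three contributions gives $\|q_{\alpha,\mu}^*\|_{L^p(\R)}<\infty$.

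The main obstacle is the estimate in the second paragraph: one must recognise that the self-consistent choice of window in the definition of $q_\alpha^*$ forces a comparatively large averaging scale ($\sim 2^{k/p'}$) over each thin, tall spike, so that $q_\alpha^*$ is in fact small near the spikes — the naive maximal-function bound $q_\alpha^*\lesssim M q$ is hopelessly crude there and by itself produces an $O(1)$ contribution per spike, hence a divergent sum. Once this estimate is available, the remaining steps are routine bookkeeping, made to converge precisely by the hypothesis $p>1$ (equivalently $p'<\infty$).
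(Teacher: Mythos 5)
Your proof is correct and follows essentially the same route as the paper's: the two working estimates are identical — $q_\alpha^*$ is of size $2^{-2k/p'}$ on a window of length $\sim 2^{k/p'}$ around the $k$-th spike, and $q_\alpha^*\lesssim \operatorname{dist}(x,\operatorname{supp}\mu)^{-2}$ away from the spikes — and the resulting exponent bookkeeping $\bigl(2^{k/p'}\bigr)^{1-2p}$, summable precisely because $p>1$, is the same as the paper's bound $2^{-k(2p-3+1/p)}$ on each $\mathcal F_k^+$. The only differences are organizational: you get the near-spike bound by testing the infimum in \eqref{Otelb.function} with $d_0=2^{k/p'}/(2\alpha)$ rather than computing $d_{1,\mu}$ exactly via Remark \ref{rem_cont_meas} after reducing to $\alpha=1$, and you partition $\mathbb R$ into spike neighbourhoods $A_k$ plus a compact core instead of dyadic blocks (just choose $K$ with enough margin that every point outside it has its nearest spike of index $\ge k_0$, so your classification in the third region is complete).
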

	
	\begin{proof}
		Due to Lemma \ref{equiv_q*}, it is enough to prove this for $\alpha=1$. For our function $q$, we clearly have $\|q\|_{L^p(\mathcal{F}_k^+)} = 1$ for an even $k$. Hence, we obtain the first statement.
		
		To show the second statement, we write
		\begin{equation}\label{Lp_q*_LT}
			\|q_{1,\mu}^*\|_{L^p(\mathbb{R})}^p  = 	\|q_{1,\mu}^*\|_{L^p((-\infty,1))}^p  +  \sum_{k=2l-1; \; l\in\mathbb{N}}	\|q_{1,\mu}^*\|_{L^p(\mathcal{F}_k^+)}^p +  \sum_{k=2l; \; l\in\mathbb{N}}	\|q_{1,\mu}^*\|_{L^p(\mathcal{F}_k^+)}^p.
		\end{equation}
		Since
		\begin{equation}\label{est_q*_supp}
			q_{1,\mu}^*(x) \leq \frac{1}{\mathrm{dist}(x, \mathrm{supp}(q))^2},
		\end{equation}
		we have
		\begin{equation*}
			q_{1,\mu}^*(x) \leq  \frac{1}{(2-x)^{2}}
			\quad
			\text{for} \quad x\in (-\infty,1),
		\end{equation*}
		and hence, $	\|q_{1,\mu}^*\|_{L^p((-\infty,1))}<\infty$. 
		
		Next, we assume that $k$ is odd. Then
		$\mathrm{dist}(\mathcal{F}_{k}^+, \mathrm{supp}(q)) \asymp2^k.$
		Therefore, by \eqref{est_q*_supp},
		$$	\|q_{1,\mu}^*\|_{L^p(\mathcal{F}_k^+)}^p \asymp \frac{1}{2^{k(2p-1)}},$$
		and since $p>1$, the second term on the right-hand side of \eqref{Lp_q*_LT} is finite.

		It remains to show that the third term on the right-hand side of \eqref{Lp_q*_LT} is finite, thus, $k$ is even. We will estimate $q_{1,\mu}^*$ on the following two subsets of $\mathcal{F}_k^+$:
		\begin{equation}\label{sets_contr_LT}
			I_k = \left(2^{k-1}, x_k - \frac{d_{1,\mu}(x_k)}{2} + \frac{1}{2^{k+1}} \right),
			\qquad
			J_k = \left(x_k - \frac{d_{1,\mu}(x_k)}{2} + \frac{1}{2^{k+1}}, x_k \right).
		\end{equation}
		This  gives  the estimate of $L^p$-norm of $q_{1,\mu}^*$ over $[2^{k-1},x_k]\subset \mathcal{F}_k^+$. The remaining part of $\mathcal{F}_k^+$, the set $[x_k, 2^k]$, can be treated similarly.
		We first integrate $(q_{1,\mu}^*)^p$ over $J_k$.  Since $p>1$, we know that $2^{k(1 - 1/p) - 1} > 1/2^{k+1}$ for sufficiently large $k$. Therefore, 
		\begin{equation*}
			\mu \left( \Delta_{x_k} \left(2^{k     \left( 1- \frac{1}{p}\right) }   \right) \right) 
			= \int_{x_k- 2^{k(1 - 1/p) - 1}}^{x_k + 2^{k(1 - 1/p) - 1}} q(y) ~dy
			=  \frac{1}{2^{k     \left( 1- \frac{1}{p}\right) }}.
		\end{equation*}
		We conclude that $d_{1,\mu}(x_k) = 2^{k     \left( 1- \frac{1}{p}\right) }$, see Remark \ref{rem_cont_meas}. Let $x\in J_k$.	Then,
		\begin{equation*}
			x  - \frac{d_{1,\mu}(x_k) }{2} < x_k  - \frac{1}{2^{k+1}},
			\qquad
			x_k + \frac{1}{2^{k+1}}  <  x +\frac{d_{1,\mu}(x_k) }{2},
		\end{equation*}
		for sufficiently large $k$. Therefore,
		\begin{equation*}
			\mu \left( \Delta_{x} \left(d_{1,\mu}(x_k)   \right) \right) 
			= \int_{x  - \frac{d_{1,\mu}(x_k) }{2} }^{x  +  \frac{d_{1,\mu}(x_k) }{2} } q(y) ~dy
			= \int_{x_k- \frac{1}{2^{k+1}}}^{x_k + \frac{1}{2^{k+1}}} 2^{\frac{k}{p}} ~dy  = \frac{1}{d_{1,\mu}(x_k)}.
		\end{equation*}
		By Remark \ref{rem_cont_meas}, $d_{1,\mu}\arrowvert_{J_k} = 2^{k(1 - 1/p)}$ and 
		\begin{equation}\label{Lp_Jk_LT}
			\|q_{1,\mu}^*\|_{L^p(J_k)}^p  = \frac{1}{2^{k\left(2p - 3 +\frac{1}{p}\right)}}.
		\end{equation}
		
		Next, we evaluate $q_{1,\mu}^*$ for $x\in I_k$: since
		\begin{equation*}
			\mathrm{dist}(x,\mathrm{supp}(q)) = \mathrm{dist}\left(x, \left[x^k -\frac{1}{2^{k+1}},x^k +\frac{1}{2^{k+1}}\right]\right).
		\end{equation*}
		\eqref{est_q*_supp} implies:
		\begin{equation*}
			q_{1,\mu}^*(x) \leq  \frac{1}{\left(x_k - \frac{1}{2^{k+1}} - x\right)^2},
			\qquad
			\text{for }
			x\in I_k,
		\end{equation*}
		and hence,
		\begin{equation*}
			\|q_{1,\mu}^*\|_{L^p(I_k)}^p \lesssim   \frac{1}{2^{k\left(2p - 3 +\frac{1}{p}\right)}}.
		\end{equation*}
		Combining this with \eqref{Lp_Jk_LT}, we obtain the above estimate for the interval $[2^{k-1},x_k]$. By the similar arguments, we study $q_{1,\mu}^*$ on $[x_k,2^k]$ and obtain 
		\begin{equation*}
			\|q_{1,\mu}^*\|_{L^p(\mathcal{F}_k^+)}^p \lesssim \frac{1}{2^{k\left(2p - 3 +\frac{1}{p}\right)}}.
		\end{equation*}
		Since $p>1$,  $2p - 3 +\frac{1}{p} > 0$,
		therefore, the third term on the right-hand side of \eqref{Lp_q*_LT} is finite, and hence, $\|q_{1,\mu}^*\|_{L^p(\mathbb{R})}$ is finite.
	\end{proof}
	\begin{remark}
		The above results show that our version of the Lieb-Thirring estimate yields finite bounds for absolutely continuous potentials as long as the classical Lieb-Thirring estimate does. But for the potential $q$ in the above example, our bound gives a finite bound, while the classical Lieb-Thirring estimate yields no information.
	\end{remark}
	
	\subsection{Comparison with the subcritical  Netrusov-Weidl estimate}
	We now consider the subcritical case, that is, $0<\gamma<1/2$.  For $q\in L_{\text{loc}}^1(\mathbb{R})$,  we define
	\begin{equation*}
		A_\gamma(q) =  \sum_{k=0}^{\infty} \left(\int_{\mathcal{F}_k} (1+|x|)^\sigma q(x)dx\right)^{\frac{1}{2}+\gamma} + \left(\sum_{k=0}^{\infty} \left(\int_{\mathcal{F}_k} (1+|x|)^\sigma q(x) dx\right)^{\frac{1}{2}+\gamma}\right)^{\frac{2\gamma}{\frac{1}{2} + \gamma}},
	\end{equation*}
	\begin{equation*}
		B_\gamma(q)= \sum_{k=0}^{\infty} \left(\int_{k}^{k+1} q(x)dx\right) ^{2\gamma} + \sum_{k=0}^\infty \left(\int_{k}^{k+1} q(x)dx\right) ^{\frac{1}{2}+\gamma},
	\end{equation*}
	where $\sigma = \frac{1/2 - \gamma}{1/2 + \gamma}$. We  recall the following essential result; see \cite{NetrusovWeidl}:
	\begin{theorem}[Netrusov, Weidl]\label{thm_NW}
		Let $0<\gamma<1/2$ and $\sigma = \frac{1/2 - \gamma}{1/2 + \gamma}$. Define the sets $\mathcal{F}_0 = [-1,1]$ and $\mathcal{F}_k = [-2^k,-2^{k-1}]\cup[2^{k-1},2^k]$, for $k\in \mathbb{N}.$ Then, there exists $C>0$ depending only on $\gamma$ such that 
		\begin{equation*}
			\sum_{\nu\in \mathbb{N}} |\lambda_\nu(q)|^\gamma \leq C \min\left\{A_\gamma(q), B_\gamma(q)\right\},
			\quad
			\text{for any } q\in L_{\text{loc}}^1(\mathbb{R}).
		\end{equation*}
	\end{theorem}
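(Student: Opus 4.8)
Theorem~\ref{thm_NW} is the theorem of Netrusov and Weidl proved in \cite{NetrusovWeidl}, so the plan is to invoke that reference; for orientation I describe the shape of the argument one would reconstruct, which is close in spirit to the bracketing of Section~\ref{Subsect.Bracketing}. The proof is again a Neumann bracketing, but carried out on \emph{two} different families of intervals, one tailored to each of the two terms in $\min\{A_\gamma(q),B_\gamma(q)\}$. For the estimate by $B_\gamma(q)$ one uses a partition of $\mathbb{R}$ into intervals $I$ of varying length on which $|I|\int_I q$ is of order one --- exactly the type of partition built in Section~\ref{sec:decomposition} --- so that $|I|\asymp(\int_I q)^{-1}$ on the cells carrying little mass. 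For the estimate by $A_\gamma(q)$ one brackets instead on the dyadic annuli $\mathcal{F}_k$, and the weight $(1+|x|)^\sigma$ with $\sigma=\frac{1/2-\gamma}{1/2+\gamma}$ is precisely the factor making the contribution of an annulus of size $2^k$ invariant under the scaling $x\mapsto\kappa x$, $q(x)\mapsto\kappa^{-2}q(\kappa^{-1}x)$, under which $\sum_\nu|\lambda_\nu(q)|^\gamma$ is homogeneous of degree zero.

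The core input is a sharp one-dimensional per-cell estimate. On an interval $I$ with Neumann boundary conditions, the trace inequality of Lemma~\ref{lemma_Brinck} shows that when $|I|\int_I q$ lies below a fixed threshold the operator $-\frac{d^2}{dx^2}-q$ has exactly one negative eigenvalue, of modulus $\lesssim|I|^{-1}\int_I q$; on the cells where $|I|\int_I q\asymp1$ this becomes $\asymp(\int_I q)^2$, and raising it to the power $\gamma$ yields $\asymp(\int_I q)^{2\gamma}$ --- this is the origin of the exponent $2\gamma$ in the first sum of $B_\gamma(q)$. On the cells carrying a large mass one uses instead that the number of negative eigenvalues is $\lesssim\sqrt{|I|\int_I q}$, each of modulus $\lesssim|I|^{-1}\int_I q$, producing a contribution $\lesssim(\int_I q)^{1/2+\gamma}$ --- the origin of the exponent $\frac12+\gamma$. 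Summing the cell contributions over the appropriate family and regrouping them into the fixed cells $[k,k+1]$, respectively the annuli $\mathcal{F}_k$, produces the sums occurring in $B_\gamma(q)$ and $A_\gamma(q)$; the extra nonlinear summand $\bigl(\sum_k(\int_{\mathcal{F}_k}(1+|x|)^\sigma q)^{1/2+\gamma}\bigr)^{\frac{2\gamma}{1/2+\gamma}}$ in $A_\gamma(q)$ arises from a H\"older interpolation between the two cell exponents $2\gamma$ and $\frac12+\gamma$, needed to absorb the low-energy part of the spectrum.

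The step I expect to be the main obstacle is exactly this multiscale bookkeeping: one must pass between the adaptively chosen bracketing intervals and the fixed families $\{[k,k+1]\}$ and $\{\mathcal{F}_k\}$, keeping precise track of how the masses $\int_I q$ distribute among overlapping cells, and do so without losing the correct powers $2\gamma$ and $\frac12+\gamma$; here the subcriticality $\gamma<\frac12$ and the exact value of $\sigma$ are both essential. Once the weight and the interpolation exponents have been pinned down, the remaining inequalities are routine, and the complete argument is given in \cite{NetrusovWeidl}.
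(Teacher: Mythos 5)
The paper does not prove this statement at all: it is quoted verbatim (in a specialized one-dimensional, non-fractional form) from \cite{NetrusovWeidl}, so your deferral to that reference is exactly what the paper itself does, and your accompanying heuristic sketch of the bracketing/weighted-decomposition strategy, while not verifiable against anything in this paper, is consistent with the cited source and introduces no error into the argument.
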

	
	\begin{remark}
		Here, we stated special versions of Theorems 3 and 4 from \cite{NetrusovWeidl} for the one-dimensional, non-fractional case.
	\end{remark}
	
	Next, we note that the combination of Theorem \ref{thm_NW}, the lower bound in Theorem \ref{LT_type_estimate}, and Lemma \ref{lem_qalpha_qbeta_equiv} gives the following estimate:

	\begin{corollary}
		Let $\alpha>0$, and $\gamma$, $\sigma$, $\mathcal{F}_k$, $A_\gamma(q)$, $B_\gamma(q)$ be as in Theorem \ref{thm_NW}.  Then, there exists $C>0$ depending only on $\alpha$ and $\gamma$ such that 
		\begin{equation*}
			\int_{\mathbb R} |q_{\alpha,\mu}^*(x)|^{\frac{1}{2} + \gamma} dx \leq C \min\left\{A_\gamma(q), B_\gamma(q)\right\},
		\end{equation*}
		for any $q\in L_{\text{loc}}^1(\mathbb{R})$ satisfying \eqref{cond_disc}, where $\mu$ is the measure given by \eqref{mu_q}. 
	\end{corollary}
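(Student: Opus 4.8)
The plan is to chain together exactly the three ingredients named in the statement, with no new work beyond bookkeeping. First, since $q$ satisfies \eqref{cond_disc}, the measure $\mu$ defined by \eqref{mu_q} satisfies \eqref{MazCond}, so by Lemma \ref{q*_and_int} and Remark \ref{rem_discr} the negative part of the spectrum of $\Hb_\mu$ is discrete and consists of eigenvalues $\{\lambda_\nu(q)\}$. Hence the quantity $\sum_\nu |\lambda_\nu(q)|^\gamma$ is well-defined (possibly $+\infty$), Theorem \ref{thm_NW} applies to it, and Theorem \ref{LT_type_estimate} is available.

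Second, I would apply the lower bound of Theorem \ref{LT_type_estimate} with the distinguished value $\alpha_0 := (\pi^2 + \tfrac34)^{-1}$, which for every $\gamma>0$ (in particular for $0<\gamma<\tfrac12$) gives
\[
\frac{\gamma}{\gamma + \tfrac12}\Bigl(\frac14\Bigr)^{\gamma + \frac12} \int_{\mathbb R} |q_{\alpha_0,\mu}^*(x)|^{\frac12 + \gamma}\,dx \;\le\; \sum_{\nu=1}^\infty |\lambda_\nu(q)|^\gamma .
\]
Third, to pass from $q_{\alpha_0,\mu}^*$ to $q_{\alpha,\mu}^*$ for an arbitrary $\alpha>0$, I would invoke Lemma \ref{lem_qalpha_qbeta_equiv}: if $\alpha\ge\alpha_0$ it gives $q_{\alpha,\mu}^*\le(\alpha/\alpha_0)^2\,q_{\alpha_0,\mu}^*$, and if $\alpha\le\alpha_0$ it gives $q_{\alpha,\mu}^*\le q_{\alpha_0,\mu}^*$; in all cases $q_{\alpha,\mu}^*\le c(\alpha)\,q_{\alpha_0,\mu}^*$ with $c(\alpha)=\max\{1,(\alpha/\alpha_0)^2\}$ depending only on $\alpha$, since $\alpha_0$ is a numerical constant. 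Raising to the power $\tfrac12+\gamma$ and integrating yields
\[
\int_{\mathbb R} |q_{\alpha,\mu}^*(x)|^{\frac12 + \gamma}\,dx \;\le\; c(\alpha)^{\frac12 + \gamma} \int_{\mathbb R} |q_{\alpha_0,\mu}^*(x)|^{\frac12 + \gamma}\,dx .
\]

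Combining the last two displays with Theorem \ref{thm_NW} then gives
\[
\int_{\mathbb R} |q_{\alpha,\mu}^*(x)|^{\frac12 + \gamma}\,dx \;\le\; c(\alpha)^{\frac12+\gamma}\,\frac{\gamma + \tfrac12}{\gamma}\, 4^{\gamma + \frac12} \sum_{\nu=1}^\infty |\lambda_\nu(q)|^\gamma \;\le\; C \min\{A_\gamma(q), B_\gamma(q)\},
\]
with $C$ depending only on $\alpha$ and $\gamma$, which is the assertion. There is no genuine obstacle here; the only point needing mild care is the bookkeeping of finiteness: if $\min\{A_\gamma(q),B_\gamma(q)\}=\infty$ the inequality is vacuous, whereas if it is finite then Theorem \ref{thm_NW} forces $\sum_\nu|\lambda_\nu(q)|^\gamma<\infty$, and then Theorem \ref{LT_type_estimate} guarantees convergence of $\int|q_{\alpha_0,\mu}^*|^{1/2+\gamma}$, so every term above is genuinely finite and the chain of inequalities is legitimate. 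The second small point, that the constant coming from Lemma \ref{lem_qalpha_qbeta_equiv} depends on $\alpha$ alone, is immediate because the reference parameter $\alpha_0=(\pi^2+\tfrac34)^{-1}$ is absolute.
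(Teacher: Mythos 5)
Your proposal is correct and follows essentially the same route as the paper, which obtains the corollary precisely by combining Theorem \ref{thm_NW}, the lower bound of Theorem \ref{LT_type_estimate} (at $\alpha_0=(\pi^2+\frac34)^{-1}$), and Lemma \ref{lem_qalpha_qbeta_equiv} to pass to general $\alpha$. Your extra remarks on discreteness of the negative spectrum and on the finiteness bookkeeping are accurate and only make explicit what the paper leaves implicit.
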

	The converse relation does not hold; see the construction below.
	
	\begin{example}\label{exm_for_NW_thm}
		Let $0<\gamma<1/2$, $\sigma= \frac{1/2 - \gamma}{1/2 + \gamma}$, and $x_k$ be the center point of $ \mathcal{F}_{k}^+ = [2^{k-1}, 2^k]$. Define
		a function $	q = q_1 + q_2$,  where 
		\begin{align*}
			q_1(x) &= 
			\begin{cases}
				1, \quad &x \in \left(x_k - \frac{1}{2^{k\sigma + 1}},  x_k + \frac{1}{2^{k\sigma + 1}} \right) \text{ and } k \text{ is even},\\
				0 & \text{otherwise},
			\end{cases}\\
			q_2(x) &= 
			\begin{cases}
				\frac{2^3}{2^{\frac{k}{2\gamma}}} & x \in \left(x_k - 2^{k-3},  x_k + 2^{k-3} \right)\text{ and } k \text{ is even},\\
				0 & \text{otherwise}.
			\end{cases}
		\end{align*}	
	\end{example}
	
	We show that for the example above, the upper bounds in Theorem \ref{thm_NW} are infinite, while the upper bound in Theorem \ref{LT_type_estimate} is finite. 
	\begin{proposition}\label{prop_NW_inf}
		Let $\alpha>0$, then there exists $q\in L_{loc}^1(\mathbb{R})$ such that
		\begin{equation*}
			A_\gamma(q) = \infty, 
			\qquad
			B_\gamma(q) = \infty,
			\qquad
			\|q_{1,\mu}^*\|_{L^{\frac{1}{2}+\gamma}(\mathbb{R})}<\infty,
		\end{equation*}
		where $\mu$ is the measure given by \eqref{mu_q}.
	\end{proposition}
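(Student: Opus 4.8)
The plan is to verify the three assertions separately. Divergence of $A_\gamma(q)$ and $B_\gamma(q)$ is essentially free: since $q=q_1+q_2$ with $q_1,q_2\ge 0$ and every summand occurring in $A_\gamma$ and in $B_\gamma$ is monotone in the integrand, we have $A_\gamma(q)\ge A_\gamma(q_1)$ and $B_\gamma(q)\ge B_\gamma(q_2)$, so it suffices to make each of the latter infinite. Keeping only the first sum in $A_\gamma$ and using $1+x\asymp 2^k$ on $\mathcal F_k^+$ together with $|\supp q_1\cap\mathcal F_k|=2^{-k\sigma}$, one gets, for every even $k$, $\int_{\mathcal F_k}(1+|x|)^\sigma q_1\,dx\asymp 2^{k\sigma}\cdot 2^{-k\sigma}=1$, whence $A_\gamma(q_1)\gtrsim\sum_{k\ \mathrm{even}}1=\infty$. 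Keeping only the $2\gamma$-power sum in $B_\gamma$: for even $k$ the interval $(x_k-2^{k-3},x_k+2^{k-3})$ contains $\gtrsim 2^{k}$ unit intervals $[n,n+1]$, on each of which $\int_n^{n+1}q_2=8\cdot 2^{-k/(2\gamma)}$, so level $k$ contributes $\gtrsim 2^{k}\cdot(2^{-k/(2\gamma)})^{2\gamma}=2^{k}\cdot 2^{-k}\asymp 1$, and $B_\gamma(q_2)=\infty$.

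For $\|q_{1,\mu}^\ast\|_{L^p}<\infty$, with $p:=\tfrac12+\gamma\in(\tfrac12,1)$, I would first decouple the two components: Lemma~\ref{lem_est_lin} gives $q_{1,\mu}^\ast\le 2\bigl(q_{1,\mu_1}^\ast+q_{1,\mu_2}^\ast\bigr)$, where $\mu_j$ is the measure with density $q_j$, and since $0<p<1$ one has $(a+b)^p\le a^p+b^p$; hence it is enough to bound $\int_{\mathbb R}\bigl(q_{1,\mu_j}^\ast\bigr)^p\,dx$ for $j=1,2$. For each $j$ the measure $\mu_j$ is a disjoint union of ``lumps'' centred at the points $x_k$ ($k$ even), and I would split $\mathbb R$ into (i) a neighbourhood of each $x_k$ on which $q_{1,\mu_j}^\ast$ is controlled by a multiple of $v_k^{(j)}:=q_{1,\mu_j}^\ast(x_k)$ --- an interval of length $\asymp d_{1,\mu_j}(x_k)$ furnished by the controlled-variation estimate of Lemma~\ref{equiv_q*}, except that for a wide $q_2$-bump whose critical window fits strictly inside it one takes the whole bump, throughout which $q_{1,\mu_2}^\ast\asymp 2^{-k/(2\gamma)}$ because $\mu_2$ has constant density there --- and (ii) the complementary ``gaps'', on which $\supp\mu_j$ is disjoint from the region, so that \eqref{est_q*_supp} (applied to $\mu_j$) gives $q_{1,\mu_j}^\ast(x)\le\mathrm{dist}(x,\supp\mu_j)^{-2}$. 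Since $2p>1$, the $p$-th power of $\mathrm{dist}^{-2}$ integrates over the part of a gap adjacent to $x_k$ to a quantity of the same order as the corresponding neighbourhood contribution, and over the far ray $x\le\min\supp\mu_j$ it is finite; thus the whole problem reduces to computing the scales $d_{1,\mu_j}(x_k)$ and summing geometric series.

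It remains to find these scales. For the thin spikes, for large $k$ the window realising $d_{1,\mu_1}(x_k)$ leaves the spike (of mass $2^{-k\sigma}$) and reaches the balance $\mu_1(\Delta_{x_k}(d))=1/d$ before meeting the neighbouring spike, so $d_{1,\mu_1}(x_k)\asymp 2^{k\sigma}$ and the level-$k$ contribution is $\asymp d_{1,\mu_1}(x_k)\bigl(v_k^{(1)}\bigr)^p\asymp 2^{k\sigma(1-2p)}=2^{-2k\sigma\gamma}$, summable since $\sigma,\gamma>0$. The genuinely delicate point --- the main obstacle --- is the wide $q_2$-bumps, where the answer depends on whether $\gamma>\tfrac14$ or $\gamma\le\tfrac14$: the $k$-th bump has mass $M_k=2^{k+1-k/(2\gamma)}$, and although $\sum_k M_k<\infty$ the $M_k$ are \emph{decreasing} in $k$, so, seen from $x_k$, the heavy mass sits at the small-index bumps far to the left. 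If $\gamma>\tfrac14$ the critical window stays inside the $k$-th bump, $v_k^{(2)}\asymp 2^{-k/(2\gamma)}$, and the bump (of width $\asymp 2^k$) together with its transition zone contributes $\asymp 2^{k}\cdot 2^{-kp/(2\gamma)}+2^{-k/2}=2^{k(1/2-1/(4\gamma))}+2^{-k/2}$, summable precisely because $\gamma<\tfrac12$; if $\gamma\le\tfrac14$ the $k$-th bump is too light and the critical window must expand to size $\asymp 2^k$ in order to engulf the heavy low-index bumps, so $d_{1,\mu_2}(x_k)\asymp 2^k$, $v_k^{(2)}\asymp 2^{-2k}$, and the contribution is $\asymp 2^{k}(2^{-2k})^p=2^{-2k\gamma}$, again summable. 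Adding up over the even $k$ and over $j=1,2$ yields $\|q_{1,\mu}^\ast\|_{L^p}<\infty$, and the statement for an arbitrary averaging parameter $\alpha>0$ then follows from Lemma~\ref{lem_qalpha_qbeta_equiv}, which makes $q_{\alpha,\mu}^\ast$ and $q_{1,\mu}^\ast$ comparable up to a constant depending only on $\alpha$. Thus the only part requiring genuine care is the interaction of the critical window with the full family of bumps when $\gamma\le\tfrac14$; everything else is the bookkeeping of convergent geometric series.
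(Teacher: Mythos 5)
Your proposal follows essentially the same route as the paper: the same divergence computations for $A_\gamma$ (via the spikes $q_1$) and $B_\gamma$ (via the unit subintervals of the wide bumps of $q_2$), the same splitting $q_{1,\mu}^\ast\lesssim q_{1,\mu_1}^\ast+q_{1,\mu_2}^\ast$ via Lemma \ref{lem_est_lin}, the same scale $d_{1,\mu_1}(x_k)\asymp 2^{k\sigma}$ giving the summable contribution $2^{-2k\sigma\gamma}$, the same case split at $\gamma=\tfrac14$ for $\mu_2$, and the same use of \eqref{est_q*_supp} on the regions away from the support. All the claimed orders of magnitude are correct.

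There is, however, one concrete soft spot in the mechanism you propose for the case $\gamma\le\tfrac14$. You obtain the uniform bound $q_{1,\mu_2}^\ast\lesssim 2^{-2k}$ on $\mathcal F_k^+$ from Lemma \ref{equiv_q*} applied at the single center $x_k$, together with the distance bound \eqref{est_q*_supp} on the complementary gap. Lemma \ref{equiv_q*} controls $q^\ast$ from above only on the \emph{half}-window $\Delta_{x_k}(d_{1,\mu_2}(x_k)/2)$. For $\gamma<\tfrac14$ one indeed has $d_{1,\mu_2}(x_k)\sim 2x_k$, so the half-window swallows all of $\mathcal F_k^+$ and your scheme closes; but at the borderline $\gamma=\tfrac14$ one computes $d_{1,\mu_2}(x_k)=2^{k-1}$ exactly, so the half-window is precisely the bump $(x_k-2^{k-3},x_k+2^{k-3})$, and the remaining part of $\mathcal F_k^+$ adjacent to the bump is covered neither by Lemma \ref{equiv_q*} nor by a finite \eqref{est_q*_supp}-estimate, since $\int \operatorname{dist}(x,\operatorname{supp}\mu_2)^{-2p}dx$ diverges at the bump edge when $2p>1$. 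The repair is exactly the device the paper uses (and which you already invoke implicitly in your ``transition zone'' for $\gamma>\tfrac14$): run the window--mass estimate at \emph{every} $x\in\mathcal F_k^+$, noting that a window of size $2^{k-1}$ centered at any such $x$ stays inside $\mathcal F_{k-1}^+\cup\mathcal F_k^+\cup\mathcal F_{k+1}^+$, where $\mu_2$ has total mass at most $2^{1-k}$, whence $d_{1,\mu_2}(x)\ge 2^{k-1}$ and $q_{1,\mu_2}^\ast(x)\lesssim 2^{-2k}$ pointwise on $\mathcal F_k^+$. With this substitution your argument coincides with the paper's proof.
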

	
	\begin{proof}
		Due to Lemma \ref{equiv_q*}, it suffices to prove the statement for $\alpha=1$ only. Let $q$ be the function defined in Example \ref{exm_for_NW_thm}. For an even $k$, it is clear that
		\begin{equation*}
			\int_{2^{k-1}}^{2^k} (1 + x)^\sigma q_1(x) ~dx \gtrsim 1,
			\qquad
			\sum_{m=2^{k-1}}^{2^{k}-1} \left(\int_m^{m+1} q_2(x) ~dx\right)^{2\gamma} \gtrsim 1.
		\end{equation*}
		Therefore, the first two statements are true. 
		
		Let $\mu_1$ and $\mu_2$ be the measures defined by \eqref{mu_q} with $q$ replaced by $q_1$ and $q_2$, respectively. Then, by Lemma \ref{lem_est_lin}, we know that 
		\begin{equation*}
			\|q_{1,\mu}^*\|_{L^{\frac{1}{2}+\gamma}(\mathbb{R})} \lesssim \|q_{1,\mu_1}^*\|_{L^{\frac{1}{2}+\gamma}(\mathbb{R})} + \|q_{1,\mu_2}^*\|_{L^{\frac{1}{2}+\gamma}(\mathbb{R})}.
		\end{equation*}
		We write, for $m=1,2$,
		\begin{multline}\label{Lp_q*_NW}
			\|q_{1,\mu_m}^*\|_{L^{\frac{1}{2}+\gamma}(\mathbb{R})}^{\frac{1}{2}+\gamma} \\
			= 	\|q_{1,\mu_m}^*\|_{L^{\frac{1}{2}+\gamma}((-\infty,1))}^{\frac{1}{2}+\gamma}  +  \sum_{k=2l-1; \; l\in\mathbb{N}}	\|q_{1,\mu_m}^*\|_{L^{\frac{1}{2}+\gamma}(\mathcal{F}_k^+)}^{\frac{1}{2}+\gamma} +  \sum_{k=2l; \; l\in\mathbb{N}}	\|q_{1,\mu_m}^*\|_{L^{\frac{1}{2}+\gamma}(\mathcal{F}_k^+)}^{\frac{1}{2}+\gamma}.
		\end{multline}
		By the same arguments we do in the proof of Proposition \ref{comp_LT}, the first term on the right-hand side is finite for $m=1,2.$
		
		To investigate the second term, we consider odd $k$ first. For $m=1,2$, we estimate
		\begin{equation*}
			\mathrm{dist}(\mathcal{F}_k^+, \mathrm{supp}(q_m)) \leq \mathrm{dist}(\mathcal{F}_k^+, \mathrm{supp}(q))  \asymp 2^k.
		\end{equation*}
		Hence, \eqref{est_q*_supp} implies 
		\begin{equation*}
			\|q_{1,\mu_m}^*\|_{L^{\frac{1}{2}+\gamma}(\mathcal{F}_k^+)}^{\frac{1}{2}+\gamma} \asymp 2^{k-1} \frac{1}{(2^{2k})^{\frac{1}{2} + \gamma}} \asymp \frac{1}{2^{2k\gamma}}.
		\end{equation*}
		Therefore, the second term in \eqref{Lp_q*_NW} on the right-hand side of \eqref{Lp_q*_NW}is finite. 
		
		It remains to show that the third term is finite for $m=1,2$, this means, for an even $k$. We  consider  the potential  $q_{1,\mu_1}$ on the following subsets in $\mathcal{F}_k^+$:
		\begin{equation*}
			I_k^1 = \left(2^{k-1}, x_k - \frac{d_{1,\mu_1}(x_k)}{2} + \frac{1}{2^{k\sigma+1}}\right),
			\qquad
			J_k^1 = \left(x_k - \frac{d_{1,\mu_1}(x_k)}{2} + \frac{1}{2^{k\sigma+1}}, x_k \right).
		\end{equation*}
		We study $q_{1,\mu}^*$ in $J_k^1$ first. Since $0<\sigma<1$, for sufficiently large $k$,
		\begin{equation*}
			\left(x_k - \frac{1}{2^{k\sigma + 1}},  x_k + \frac{1}{2^{k\sigma + 1}} \right) \subset \left(x_k - 2^{k\sigma - 1}, x_k + 2^{k\sigma - 1}\right) \subset \mathcal{F}_k^+.
		\end{equation*}
		Thus, $\mu_1(\Delta_{x_k}(2^{k\sigma}))  = {2^{-k\sigma}},$
		and hence, $d_{1,\mu_1}(x_k) = 2^{k\sigma}$, see Remark \ref{rem_cont_meas}. For $x\in J_k^1$,
		\begin{equation*}
			x - 2^{k\sigma - 1}< x_k -\frac{1}{2^{k\sigma+1}},
			\qquad
			x_k +\frac{1}{2^{k\sigma+1}} < x + \frac{d_{1,\mu_1}(x_k)}{2}  = x + 2^{k\sigma - 1}
		\end{equation*}
		and hence,
		$\mu_1(\Delta_{x} (2^{k\sigma})) =2^{-k\sigma}$.
		Therefore, as in  Remark \ref{rem_cont_meas}, $d_{1,\mu_1}\arrowvert_{J_k^1} = 2^{k\sigma}$, so that
		\begin{equation*}
			\|q_{1,\mu_1}^*\|_{L^{\frac{1}{2}+\gamma}(J_k^1)}^{{\frac{1}{2}+\gamma}} = \left( \frac{d_{1,\mu_1}(x_k)}{2}  - \frac{1}{2^{k\sigma+1}}\right) \frac{1}{2^{2k\sigma\left(\frac{1}{2} + \gamma \right)}} \asymp  \frac{1}{2^{2k\sigma\gamma}}.
		\end{equation*}
		
		Let $x\in I_k^1$, then
		\begin{equation}\label{Lp_Jk1_NW}
			\mathrm{dist}(x,\mathrm{supp}(q_1)) = x_k - \frac{1}{2^{k\sigma + 1}} - x.
		\end{equation}
		Now, \eqref{est_q*_supp} implies 
		\begin{equation*}
			\|q_{1,\mu_1}^*\|_{L^{\frac{1}{2}+\gamma}(I_k^1)}^{\frac{1}{2}+\gamma} \leq \int_{I_k^1} \frac{1}{\left(x_k - \frac{1}{2^{k\sigma + 1}} - x\right)^{1+2\gamma}}~dx \leq \frac{1}{\left( \frac{d_{1,\mu_1}(x_k)}{2} - \frac{1}{2^{k\sigma}}   \right)^{2\gamma}} \lesssim \frac{1}{2^{2k\sigma\gamma}}.
		\end{equation*}
		Combining this with \eqref{Lp_Jk1_NW}, we obtain
		\begin{equation*}
			\|q_{1,\mu_1}^*\|_{L^{\frac{1}{2}+\gamma}(2^{k-1},x_k)}^{\frac{1}{2}+\gamma}  \lesssim \frac{1}{2^{2k\sigma\gamma}}.
		\end{equation*}
		In the same way, we investigate $q_{1,\mu}^*$ on the interval  $[x_k,2^k]$ and derive 
		\begin{equation*}
			\|q_{1,\mu_1}^*\|_{L^{\frac{1}{2}+\gamma}(\mathcal{F}_k^+)}^{\frac{1}{2}+\gamma}  \lesssim \frac{1}{2^{2k\sigma\gamma}}.
		\end{equation*}
		Since $\gamma<1/2$, the latter estimates show that the third term on the right-hand side in \eqref{Lp_q*_NW} is finite for $m=1$. We will show that this is also true for $m=2$.  To study $q_{1,\mu_2}^*$ on $\mathcal{F}_k^+$ with an even $k$, we will consider two cases: $\gamma\leq 1/4$ and $\gamma>1/4$.
		
		Let $\gamma\leq 1/4$. For $x\in \mathcal{F}_k^+$, 
		\begin{equation*}
			(x - 2^{k-2}, x + 2^{k-2}) \subset \mathcal{F}_{k-1}^+ \cup\mathcal{F}_{k}^+\cup \mathcal{F}_{k+1}^+
		\end{equation*}
		and $q_{1,\mu_2}^*\arrowvert_{\mathcal{F}_{k-1}^+\cup \mathcal{F}_{k+1}^+} =0$. Therefore, using $\gamma\leq 1/4$, we estimate
		\begin{equation*}
			\int_{x - 2^{k-2}}^{x+2^{k-2}} q_2(y)~dy \leq 2^{k-2} \frac{2^3}{2^{\frac{k}{2\gamma}}} \leq  \frac{1}{2^{k-1}}
		\end{equation*}
		for sufficiently large $k$. Then, $d_{1,\mu_2}(x)>2^{k-1}$ for $x\in \mathcal{F}_k^+$ and therefore: 
		\begin{equation}\label{est_gamma_leq_14}
			\|q_{1,\mu_2}^*\|_{L^{\frac{1}{2}  +  \gamma  }(\mathcal{F}_k^+)}^{ \frac{1}{2}  +  \gamma }\lesssim \frac{1}{2^{2k\gamma}}.
		\end{equation}
		Hence,  the third term on the right-hand side of \eqref{Lp_q*_NW} is finite for $m=2$.
		
		Finally, for $\gamma>1/4$, 
		\begin{equation*}
			\int_{x_k - 2^{k-2}}^{x_k+2^{k-2}} q_2(x)dx = 2^{k-2} \frac{2^3}{2^{\frac{k}{2\gamma}}} \geq \frac{1}{2^{k-1}}
		\end{equation*}
		for sufficiently large $k$, and hence, $d_{1,\mu_2}(x_k)< 2^{k-1}$. This means that for $x\in \mathcal{F}_k^+$,
		\begin{equation*}
			\left(x - \frac{d_{1,\mu_2}(x_k)}{2}, x + \frac{d_{1,\mu_2}(x_k)}{2}\right) \subset\mathcal{F}_{k-1}^+ \cup\mathcal{F}_{k}^+\cup \mathcal{F}_{k+1}^+,
		\end{equation*}
		for sufficiently large $k$. Therefore, since $q_{1,\mu_2}^*\arrowvert_{\mathcal{F}_{k-1}^+\cup \mathcal{F}_{k+1}^+} =0$, we estimate
		\begin{equation*}
			\frac{1}{d_{1,\mu_2}(x)} = \int_{x-\frac{d_{1,\mu_2}(x)}{2}}^{x+\frac{d_{1,\mu_2}(x)}{2}} q_2(y)dy \lesssim d_{1,\mu_2}(x) \frac{1}{2^{\frac{k}{2\gamma}}},
			\quad
			\text{for } x\in \mathcal{F}_k^+,
		\end{equation*}
		so that $q_{1,\mu_2}^*(x) =  1/d_{1,\mu_2}^2(x)\lesssim1/2^{\frac{k}{2\gamma}}$. Therefore,
		\begin{equation*}
			\|q_{1,\mu_2}^*\|_{L^{\frac{1}{2}  +  \gamma  }(\mathcal{F}_k^+)}^{ \frac{1}{2}  +  \gamma }  \lesssim 2^k \frac{1}{2^{\frac{k}{2\gamma}(\frac{1}{2}+\gamma)}}.
		\end{equation*}
		Since $\frac{1}{2\gamma} \left(\frac{1}{2} + \gamma\right)\ge 1$ for $\gamma<1/2$,
		we conclude from this and \eqref{est_gamma_leq_14} that for both $\gamma\leq 1/4$ and $\gamma>1/4$,  the third term on the right-hand side in \eqref{Lp_q*_NW} is finite for $m=2$. This completes the proof.
	\end{proof}
	
	\begin{remark}
		For $q$ as in Example \ref{exm_for_NW_thm}, Theorem \ref{thm_NW}  gives no information due to Proposition \ref{prop_NW_inf}. However, from Theorem \ref{LT_type_estimate} and Proposition \ref{prop_NW_inf}, we conclude that $\mathrm{LT}_{\gamma}(\mathbf{H}_\mu)$ is finite, which shows that our estimates are sharper.
	\end{remark}

	\section{Examples}
	In this section, we obtain some corollaries and give some examples.
	
	Let $(x_k)_{k \in \mathbb Z}$ be a sequence tending to $\pm \infty$ as $k$ tends to $\pm \infty$. Further, let $(a_k)_{k \in \mathbb Z}$ be a sequence of positive numbers. Consider the measure $\mu=\sum_{k\in \mathbb{Z}} a_k\delta_{ x_k}$. Of course, the quadratic form $\mathbf{h}_\mu$ is not lower-bounded when the sequence $(a_k)_{k \in \mathbb Z}$ is unbounded. Hence, we consider examples with case $a_k \to 0$ in the following.

	\begin{corollary} \label{l3}
		Let $\mu=\sum_{k=1}^\infty a_k\delta_{x_k}$. Assume that there exists $\alpha>0$ such that
		\begin{equation*}
			x_{k+1} - x_k>\max\left\{   (\alpha a_k)^{-1}, (\alpha a_{k+1})^{-1}   \right\}
		\end{equation*}
		for all $k\in\mathbb{N}$. Then,
		\begin{equation}\label{3}
			C_1 \alpha^{2\gamma}\sum_{k=1}^\infty a_k^{2\gamma}\leq\sum_{k=1}^\infty\lambda_k^\gamma\leq C_2 \alpha^{2\gamma} \sum_{k=1}^\infty a_k^{2\gamma}
		\end{equation}
		for some $C_1$, $C_2>0$ independent of $\{x_k\}$ and $\{a_k\}$.
		
	\end{corollary}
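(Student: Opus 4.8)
The plan is to reduce \eqref{3} to an explicit evaluation of Otelbaev's function for $\mu=\sum_{k\ge1}a_k\delta_{x_k}$, and then feed this into the Lieb--Thirring estimate of Theorem~\ref{LT_type_estimate}. That theorem bounds $\sum_\nu|\lambda_\nu|^\gamma$ from below by a constant times $\int_{\mathbb R}|q_{\alpha_1,\mu}^*|^{\frac12+\gamma}\,dx$ and from above by a constant times $\int_{\mathbb R}|q_{\alpha_2,\mu}^*|^{\frac12+\gamma}\,dx$, with $\alpha_1=(\pi^2+\tfrac34)^{-1}$ and $\alpha_2=2$; moreover, by Lemma~\ref{lem_qalpha_qbeta_equiv}, the functions $q_{\beta,\mu}^*$ for different averaging parameters $\beta$ are pointwise comparable, with constants depending only on the ratio of the parameters. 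Hence it suffices to show
\[
\sum_{k=1}^{\infty}\alpha^{2\gamma}a_k^{2\gamma}\ \le\ \int_{\mathbb R}\bigl|q_{\alpha,\mu}^*(x)\bigr|^{\frac12+\gamma}\,dx\ \le\ C_\gamma\,\alpha^{2\gamma}\sum_{k=1}^{\infty}a_k^{2\gamma}
\]
for a constant $C_\gamma$ depending only on $\gamma$; the resulting constants $C_1,C_2$ in \eqref{3} will then depend on $\gamma$ and on $\alpha$ through the transfer constants of Lemma~\ref{lem_qalpha_qbeta_equiv}. If $\sum_k a_k^{2\gamma}=\infty$, the same chain of inequalities forces $\sum_\nu|\lambda_\nu|^\gamma=\infty$ as well, so both sides of \eqref{3} are infinite and there is nothing to prove; thus we may assume $\sum_k a_k^{2\gamma}<\infty$.

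The key computation is the identification of $q_{\alpha,\mu}^*$ near each point mass. The separation hypothesis states that $x_{k+1}-x_k$ and $x_k-x_{k-1}$ both exceed $(\alpha a_k)^{-1}$, so that for every $x$ with $|x-x_k|\le\tfrac1{2\alpha a_k}$ and every $d\le(\alpha a_k)^{-1}$ the window $\Delta_x(d)$ meets the support $\{x_j\}$ in at most the single point $x_k$. Plugging this into \eqref{Otelb.function}: for such $x$ we have $\mu(\Delta_x(d))=0$ when $d<2|x-x_k|$ and $\mu(\Delta_x(d))=a_k$ once $\Delta_x(d)\ni x_k$, and $a_k<(\alpha d)^{-1}$ exactly for $d<(\alpha a_k)^{-1}$, while enlarging $d$ past $(\alpha a_k)^{-1}$ (where the window begins to catch $x_{k\pm1}$) only increases $\mu(\Delta_x(d))$. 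Therefore the set of admissible $d$ in \eqref{Otelb.function} is precisely $(0,(\alpha a_k)^{-1})$, and $q_{\alpha,\mu}^*(x)=\alpha^2a_k^2$ identically on the block $B_k:=[x_k-\tfrac1{2\alpha a_k},\,x_k+\tfrac1{2\alpha a_k}]$, which has length $(\alpha a_k)^{-1}=d_\alpha(x_k)$. The blocks $B_k$ are pairwise disjoint because $x_{k+1}-x_k>\tfrac12\bigl((\alpha a_k)^{-1}+(\alpha a_{k+1})^{-1}\bigr)$. Integrating, $\int_{B_k}|q_{\alpha,\mu}^*|^{\frac12+\gamma}\,dx=(\alpha a_k)^{-1}(\alpha^2a_k^2)^{\frac12+\gamma}=\alpha^{2\gamma}a_k^{2\gamma}$, which already gives the lower bound. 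For the upper bound we use that off the support $q_{\alpha,\mu}^*(x)\le\tfrac14\,\mathrm{dist}(x,\{x_j\})^{-2}$ (take $d$ just below $2\,\mathrm{dist}(x,\{x_j\})$ in \eqref{Otelb.function}, where $\Delta_x(d)$ carries no mass). The complement of $\bigcup_k B_k$ is a union of the gaps between consecutive blocks together with one unbounded half-line to the left of $B_1$; splitting each gap at its midpoint, on the half-interval abutting a given $B_k$ the distance to $\{x_j\}$ equals the distance to $x_k$ and is $\ge\tfrac1{2\alpha a_k}$, so this piece contributes at most $4^{-(\frac12+\gamma)}\int_{1/(2\alpha a_k)}^{\infty}t^{-1-2\gamma}\,dt=\tfrac1{2\gamma}4^{-(\frac12+\gamma)}(2\alpha a_k)^{2\gamma}\lesssim_\gamma\alpha^{2\gamma}a_k^{2\gamma}$, and similarly for the unbounded end. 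Summing the block terms and the tail terms yields $\int_{\mathbb R}|q_{\alpha,\mu}^*|^{\frac12+\gamma}\,dx\le C_\gamma\alpha^{2\gamma}\sum_k a_k^{2\gamma}$.

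It remains to run the reduction of the first paragraph: Lemma~\ref{lem_qalpha_qbeta_equiv} transfers the displayed two-sided bound on $\int|q_{\alpha,\mu}^*|^{\frac12+\gamma}\,dx$ to the same bound (with modified, $\alpha$- and $\gamma$-dependent constants) for $\int|q_{\alpha_1,\mu}^*|^{\frac12+\gamma}\,dx$ and $\int|q_{\alpha_2,\mu}^*|^{\frac12+\gamma}\,dx$, and Theorem~\ref{LT_type_estimate} then gives \eqref{3}. I expect the only genuinely delicate point to be the exact description of $q_{\alpha,\mu}^*$ around each point mass in the second paragraph: one must check that the averaging window sees $x_k$ in isolation on a block of length comparable to $(\alpha a_k)^{-1}$ --- which is exactly what the separation hypothesis is engineered to guarantee --- and must also confirm that the inverse-square-distance decay between the blocks produces tail integrals of the correct order $(\alpha a_k)^{2\gamma}$, so that they do not dominate the sum $\sum_k a_k^{2\gamma}$.
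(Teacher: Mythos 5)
Your proposal is correct and takes essentially the same route as the paper's proof: identify $q_{\alpha,\mu}^*$ explicitly (equal to $\alpha^2a_k^2$ on blocks of length $(\alpha a_k)^{-1}$ around each $x_k$, with inverse-square-distance decay in the gaps), integrate to obtain $\int_{\mathbb R}(q_{\alpha,\mu}^*)^{\frac12+\gamma}dx\asymp\alpha^{2\gamma}\sum_k a_k^{2\gamma}$, and conclude via Theorem \ref{LT_type_estimate} together with Lemma \ref{lem_qalpha_qbeta_equiv}. One cosmetic repair: split each gap at the point equidistant from $x_k$ and $x_{k+1}$ rather than at the gap's midpoint (at the latter the nearest mass need not be $x_k$ when $a_k\neq a_{k+1}$), or simply bound each one-sided integrand over a half-line as the paper does; the tail terms remain of order $\alpha^{2\gamma}(a_k^{2\gamma}+a_{k+1}^{2\gamma})$ and the conclusion is unaffected.
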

	
	\begin{proof} We note that
		$$
		q_\alpha^*(x)=
		\left\{
		\begin{array}{lll}
			(a_k\alpha)^2,& \text{if}& x\in(x_k-(2a_k\alpha)^{-1},x_k+(2a_k\alpha)^{-1})\\
			\left(\min\left\{\frac1{x-x_k} ,\frac1{x_{k+1}-x}  \right\}\right)^2,& \text{if}&  x\in(x_k+(2a_k\alpha)^{-1},x_{k+1}-(2a_{k+1}\alpha)^{-1}).
		\end{array}
		\right.
		$$
		Therefore, for $k\in\mathbb{N}$,
		\begin{equation*}
			\int_{x_k-(2a_k\alpha)^{-1}}^{x_k+(2a_k\alpha)^{-1}}(q_\alpha^*(x))^{\frac12+\gamma}~dx = (a_k\alpha)^{2\gamma}
		\end{equation*}
		and
		\begin{multline*}
			\int_{x_k+(2a_k\alpha)^{-1}}^{x_{k+1}-(2a_{k+1}\alpha)^{-1}}(q_\alpha^*(x))^{\frac12+\gamma}dx \leq \int_{-\infty}^{x_{k+1}-(2a_{k+1}\alpha)^{-1}}\left(\frac1{x_{k+1}-x}  \right)^{1+2\gamma}dx\\
			+  \int_{x_k+(2a_k\alpha)^{-1}}^\infty\left(\frac1{x-x_{k}}  \right)^{1+2\gamma}dx
			\asymp (a_{k+1}\alpha)^{2\gamma}+(a_k\alpha)^{2\gamma}.
		\end{multline*}
		Consequently, we obtain \eqref{3}.
	\end{proof}
	We note again that for the classical Lieb-Thirring estimate \eqref{LT.initial}, when the potential can be written as $V = V_1 + V_2$, with disjoint supports, the inequality does not reflect how far these two supports are apart from each other. This behavior is fundamentally different from our Lieb-Thirring-type estimates. We demonstrate this with the following explicit example.
	\begin{example}
		Let $\mu = \delta_0 + \delta_y$, where $y > 0$. The Otelbaev function has different forms, depending on $y < \frac 14$, $\frac 14 \leq y < \frac 12$ or $y \geq \frac 12$. Explicit computations show the following: for $y \geq \frac{1}{2}$,
		\begin{align*}
			q_2^\ast(x) = \begin{cases} 
				\frac{1}{4x^2}, \quad &x \leq -\frac{1}{4},\\
				4, \quad &- \frac 14 \leq x \leq \frac 14,\\
				\frac{1}{4x^2}, \quad &\frac 14 \leq x \leq \frac{y}{2},\\
				\frac{1}{4(y-x)^2}, \quad &\frac{y}{2} \leq x \leq y -\frac{1}{4},\\
				4, \quad & y-\frac{1}{4} \leq x \leq y + \frac{1}{4},\\
				\frac{1}{4(y-x)^2}, \quad &x \geq y + \frac{1}{4}.
			\end{cases}
		\end{align*}
		Next, for  $\frac{1}{4} \leq y < \frac 12$, Otelbaev's function equals
		\begin{align*}
			q_2^\ast(x) = \begin{cases} 
				\frac{1}{4x^2}, \quad &x \leq -\frac{1}{4},\\
				4, \quad &  -\frac 14 \leq x \leq y - \frac 14,\\
				\frac{1}{4(y-x)^2}, \quad & y - \frac 14 \leq x \leq \frac{y}{2},\\
				\frac{1}{4x^2}, \quad &\frac y2 \leq x \leq \frac 14,\\
				4, \quad &  \frac14\leq x \leq y + \frac{1}{4},\\
				\frac{1}{4(y-x)^2}, \quad &x \geq y + \frac{1}{4}.
			\end{cases}
		\end{align*}
		Finally, for $0 < y < \frac 14$, 
		\begin{align*}
			q_2^\ast(x) = \begin{cases}
				\frac{1}{4x^2}, \quad &x \leq -\frac{1}{4},\\
				4, \quad &-\frac 14 \leq x \leq y - \frac{1}{4}\\
				\frac{1}{4(y-x)^2}, \quad & y-\frac 14 \leq x \leq y - \frac 18\\
				16, \quad & y - \frac 18 \leq x \leq \frac 18, \\
				\frac{1}{4x^2}, \quad & \frac 18 \leq x \leq \frac 14, \\
				4, \quad &  \frac 14 \leq x \leq y + \frac{1}{4},\\
				\frac{1}{4(y-x)^2}, \quad &x \geq y + \frac{1}{4}.
			\end{cases}
		\end{align*}
		Now, for $\gamma > 0$ we have the following expressions for the Lieb-Thirring estimates from Theorem \ref{LT_type_estimate}: For $y \geq \frac 12$, it is
		\begin{align*}
			4^{\gamma +1}\int_\mathbb{R} |q_2^\ast(x)|^{\gamma + \frac 12}~dx &= \frac{4\cdot 16^\gamma }{\gamma} + 8\cdot 16^\gamma - \frac{2\cdot4^\gamma \cdot y^{-2\gamma}}{\gamma}.
		\end{align*}
		For $\frac 14 \leq y < \frac 12$, we have
		\begin{align*}
			4^{\gamma +1}\int_\mathbb{R} |q_2^\ast(x)|^{\gamma + \frac 12}~dx &= \frac{\gamma 4^{2\gamma + 2}y+y^{-2\gamma}4^{\gamma + \frac 12}}{\gamma};
		\end{align*}
		and for $0 < y \leq \frac 14$, one sees that 
		\begin{align*}
			4^{\gamma + 1}\int_{\mathbb R} |q_2^\ast(x)|^{\gamma + \frac 12}~dx &= \frac{4^{3\gamma + \frac 12} + y\gamma 4^{2\gamma + 2} + \gamma 4^{3\gamma + \frac 52}(\frac 18 - \frac y2)}{\gamma}. 
		\end{align*}
		This calculation shows the following interesting limit behavior for the upper Lieb-Thirring estimate, which is not captured by the classical Lieb-Thirring estimates: When the two components of $\mu$ move away from each other, we obtain the behaviour:
		\begin{align*}
			4^{\gamma + 1}\int_{\mathbb R} |q_2^\ast(x)|^{\gamma + \frac{1}{2}}~dx \overset{y \to \infty} \longrightarrow \frac{4\cdot 16^\gamma }{\gamma} + 8\cdot 16^\gamma.
		\end{align*}
		On the other hand, when the compontents become close to each other, we see that:
		\begin{align*}
			4^{\gamma + 1}\int_{\mathbb R} |q_2^\ast(x)|^{\gamma + \frac 12}~dx \overset{y \to 0}{\longrightarrow} \frac{4^{3\gamma + \frac 12} + \gamma 4^{3\gamma + 1}}{\gamma}.
		\end{align*}
		Note that the limit $y \to 0$ agrees with the Lieb-Thirring bound obtained for the potential $\mu = 2\delta_0$: Indeed, for this potential,
		\begin{align*}
			q_2^\ast(x) = \begin{cases}
				16, \quad &|x|\leq \frac{1}{8},\\
				\frac{1}{4|x|^2}, \quad &|x| > \frac 18.
			\end{cases}
		\end{align*}
		It is not hard to verify that, for this function, the integral $4^{\gamma + 1}\int_{\mathbb R}|q_2^\ast(x)|^{\gamma + \frac 12}~dx$ agrees with the above value. Of course, this is no coincidence, as it can easily be verified by the continuity result \cite[Prop.\ III.5]{FulscheNursultanov} and the dominated convergence theorem. 
		
		On the other hand, for the limit $y \to \infty$, one might expect to recover the estimates for the potential $\delta_0$. But this is indeed not the case. Note that Otelbaev's function for the potential $\delta_0$ is given by:
		\begin{align*}
			q_2^\ast(x) = \begin{cases}
				4, \quad &|x| \leq \frac{1}{4},\\
				\frac{1}{4|x|^2}, \quad &|x| > \frac 14.
			\end{cases}
		\end{align*}
		With this explicit form, it is simple to verify that
		\begin{align*}
			4^{\gamma + 1} \int_{\mathbb R}|q_2^\ast(x)|^{\gamma + \frac{1}{2}}~dx &= \frac{4\gamma+2}{\gamma}4^{2\gamma}.
		\end{align*}
		This is clearly not the limit that we obtained for $y \to \infty$ above.
	\end{example}

	\begin{example}\label{ex:compsupp}
		Assume that the measure $\mu$ is compactly supported, say, $[-R,R] = \operatorname{conv}(\operatorname{supp}(\mu))$, the convex hull of the support of $\mu$.   Then, for any $|x| \geq R + \frac{1}{2\alpha \mu(R)}$,
		\begin{align*}
			q_\alpha^*(x) \leq \frac{1}{\operatorname{dist}^2(x, \operatorname{supp}(\mu))} \leq \frac{1}{(x - R)^2}.
		\end{align*}
		Also, for any $x\in\mathbb{R}$, by using Lemma \ref{max_interval}, we estimate
		\begin{equation*}
			\mu(\mathbb{R}) \geq \mu(\Delta_x(d_\alpha(x))) \geq \frac{1}{\alpha d_\alpha(x)} = \frac{1}{\alpha} \sqrt{q_\alpha^*(x)}.
		\end{equation*}
		Therefore, 
		\begin{align*}
			\int_{\mathbb{R}}|q_\alpha^*(x)|^{\frac{1}{2}+\gamma}dx &\leq 2\int_0^{R + \frac{1}{2\alpha \mu(R)}} (\alpha \mu(\mathbb{R}))^{1+2\gamma}dx + 2\int_{R + \frac{1}{2\alpha \mu(R)}}^\infty \frac{1}{(x - R)^{1+2\gamma}}dx\\
			& \leq R\alpha^{1+2\gamma} \mu(\mathbb{R})^{1+2\gamma} + \alpha^{2\gamma} \left(\frac{1}{2} + \frac{2^{2\gamma - 1}}{\gamma}\right)\mu(\mathbb{R})^{2\gamma}.
		\end{align*}
		Hence, due to Theorem \ref{LT_type_estimate}, 
		\begin{equation*}
			\sum_{k=1}^\infty |\lambda_k|^\gamma \leq 2^{3+4\gamma} R \mu(\mathbb{R})^{1+2\gamma} + 2^{4\gamma + 1} \left(1 + \frac{2^{2\gamma}}{\gamma}\right)\mu(\mathbb{R})^{2\gamma}.
		\end{equation*}
	\end{example}

\section*{Acknowledgement}
This research of the second author was funded by the Science Committee of the Ministry of Education and Science of the Republic of Kazakhstan (Grant No. AP22683207). Also, the second author was supported by the European Research Council of the European Union, grant 101086697 (LoCal), and the Research Council of Finland, grants 347715 and 353096. Views and opinions expressed are those of the authors only and do not necessarily reflect those of the European Union or the other funding organizations. Neither the European Union nor the other funding organizations can be held responsible for them.

	\bibliographystyle{abbrv}
	\bibliography{Fulsche_Nursultanov_Rozenblum.bib}
\end{document}